\pdfoutput=1
\documentclass[11pt, letterpaper]{article}
\usepackage{blindtext}
\usepackage{hyperref}
\hypersetup{
	colorlinks=true,
	linkcolor=blue!70!black,
	citecolor=blue!70!black,
	urlcolor=blue!70!black
}

\newcommand\blfootnote[1]{%
  \begingroup
  \renewcommand\thefootnote{}\footnote{#1}%
  \addtocounter{footnote}{-1}%
  \endgroup
}
\usepackage[english]{babel}
\usepackage[T1]{fontenc}
\usepackage{parskip}
\usepackage[margin=1in]{geometry}

\usepackage{amsmath}
\usepackage{todonotes}
\usepackage{amssymb}
\usepackage{amsthm}
\usepackage{booktabs}
\usepackage{accents}
\usepackage{algorithm}
\usepackage[lined,boxed,ruled,norelsize,algo2e,linesnumbered]{algorithm2e}
\usepackage{bbm}
\usepackage{bbold}
\usepackage{bm}
\usepackage{graphicx}
\graphicspath{{./figs/}}

\usepackage{cleveref}
\usepackage{thm-restate}
\usepackage{algorithm}
\usepackage{algpseudocode}

\newtheorem{theorem}{Theorem}
\newtheorem{lemma}{Lemma}
\newtheorem{fact}{Fact}
\newtheorem{definition}{Definition}
\newtheorem{corollary}{Corollary}
\newtheorem{proposition}{Proposition}

\newtheorem{remark}{Remark}

\newcommand{\defeq}{:=}
\newcommand{\norm}[1]{\left\lVert#1\right\rVert}
\newcommand{\norms}[1]{\lVert#1\rVert}
\newcommand{\normop}[1]{\left\lVert#1\right\rVert_{\textup{op}}}
\newcommand{\normf}[1]{\left\lVert#1\right\rVert_{\textup{F}}}

\newcommand{\normsop}[1]{\lVert#1\rVert_{\textup{op}}}
\newcommand{\normsf}[1]{\lVert#1\rVert_{\textup{F}}}

\newcommand{\inprod}[2]{\left\langle#1, #2\right\rangle}

\newcommand{\eps}{\epsilon}
\newcommand{\lam}{\lambda}
\newcommand{\0}{\mathbb{0}}

\newcommand{\R}{\mathbb{R}}

\newcommand{\N}{\mathbb{N}}
\newcommand{\bas}[1]{\begin{align*}#1\end{align*}}
\newcommand{\ba}[1]{\begin{align}#1\end{align}}
\newcommand{\bbb}[1]{\left[#1\right]}
\newcommand{\diag}[1]{\textbf{\textup{diag}}\left(#1\right)}
\newcommand{\half}{\frac{1}{2}}

\newcommand{\bk}{\color{black}}
\newcommand{\rd}{\color{red}}
\newcommand{\1}{\mathbbm{1}}
\newcommand{\E}{\mathbb{E}}

\newcommand{\Nor}{\mathcal{N}}

\newcommand{\Tr}{\textup{Tr}}

\newcommand{\ma}{\mathbf{A}}
\newcommand{\mw}{\mathbf{W}}

\newcommand{\id}{\mathbf{I}}
\newcommand{\bb}[1]{\left(#1\right)}

\usepackage{xcolor}
\definecolor{burntorange}{rgb}{0.8, 0.33, 0.0}

\newcommand{\rank}{\textup{rank}}

\newcommand{\Par}[1]{\left(#1\right)}
\newcommand{\Brack}[1]{\left[#1\right]}
\newcommand{\Brace}[1]{\left\{#1\right\}}
\newcommand{\Abs}[1]{\left|#1\right|}
\newcommand{\Pars}[1]{(#1)}
\newcommand{\Bracks}[1]{[#1]}

\newcommand{\oracle}{\mathcal{O}}
\newcommand{\alg}{\mathcal{A}}
\newcommand{\mproj}{\mathbf{P}}
\newcommand{\mm}{\mathbf{M}}
\newcommand{\tmm}{\widetilde{\mathbf{M}}}

\newcommand{\mb}{\mathbf{B}}
\newcommand{\mv}{\mathbf{V}}

\newcommand{\msig}{\boldsymbol{\Sigma}}

\newcommand{\tmsig}{\widetilde{\msig}}

\newcommand{\mlam}{\boldsymbol{\Lambda}}

\newcommand{\tmlam}{\widetilde{\mlam}}

\newcommand{\ms}{\mathbf{S}}

\newcommand{\ml}{\mathbf{L}}

\newcommand{\tms}{\widetilde{\ms}}
\newcommand{\tml}{\widetilde{\ml}}
\newcommand{\hml}{\widehat{\ml}}
\newcommand{\hms}{\widehat{\ms}}

\newcommand{\mmu}{\mathbf{U}}

\newcommand{\mq}{\mathbf{Q}}
\newcommand{\mzero}{\mathbf{0}}
\newcommand{\Sym}{\mathbb{S}}
\newcommand{\PSD}{\Sym_{\succeq \mzero}}
\newcommand{\PD}{\Sym_{\succ \mzero}}
\DeclareMathOperator{\linspan}{span}
\newcommand{\mn}{\mathbf{N}}
\newcommand{\orepca}{\oracle_{\textup{ePCA}}}
\newcommand{\orcpca}{\oracle_{\textup{cPCA}}}
\newcommand{\oopca}{\oracle_{1\textup{PCA}}}
\newcommand{\gam}{\gamma}
\newcommand{\Gam}{\Gamma}
\newcommand{\BBPCA}{\mathsf{BlackBoxPCA}}
\newcommand{\codeStyle}[1]{{\bfseries #1} }
\newcommand{\codeInput}{\codeStyle{Input:}}	
	
\newcommand{\codeReturn}{\codeStyle{Return:}}	
	
\newcommand{\md}{\mathbf{D}}
\newcommand{\inner}{\inprod}

\newcommand{\poly}{\textup{poly}}
\newcommand{\Prefix}{\mathsf{Prefix}}

\newcommand{\bDelta}{\overline{\Delta}}
\newcommand{\bGamma}{\overline{\Gamma}}

\newcommand{\gapind}{\mathcal{G}}
\newcommand{\dist}{\mathcal{D}}
\newcommand{\calP}{\mathcal{P}}
\newcommand{\polylog}{\textup{polylog}}
\newcommand{\hmsig}{\widehat{\mathbf{\Sigma}}}
\newcommand{\Oja}{\mathsf{Oja}}

\title{Black-Box $k$-to-$1$-PCA Reductions: Theory and Applications}
\author{Arun Jambulapati\thanks{University of Michigan, \texttt{jmblpati@gmail.com}.}\\
	\and
        Syamantak Kumar\thanks{University of Texas at Austin, \texttt{syamantak@utexas.edu}}\\
        \and
	Jerry Li\thanks{Microsoft Research, \texttt{jerrl@microsoft.com}} \\
        \and
        Shourya Pandey\thanks{University of Texas at Austin, \texttt{shouryap@utexas.edu}}\\
        \and
        Ankit Pensia\thanks{IBM Research, \texttt{ankitp@ibm.com}}\\
	\and
	Kevin Tian\thanks{University of Texas at Austin, \texttt{kjtian@cs.utexas.edu}}\\
}
\date{}
\begin{document}

\maketitle

\begin{abstract}
The $k$-principal component analysis ($k$-PCA) problem is a fundamental algorithmic primitive that is widely-used in data analysis and dimensionality reduction applications. In statistical settings, the goal of $k$-PCA is to identify a top eigenspace of the covariance matrix of a distribution, which we only have black-box access to via samples. Motivated by these  settings, we analyze black-box deflation methods as a framework for designing $k$-PCA algorithms, where we model access to the unknown target matrix via a black-box $1$-PCA oracle which returns an approximate top eigenvector, under two popular notions of approximation. Despite being arguably the most natural reduction-based approach to $k$-PCA algorithm design, such black-box methods, which recursively call a $1$-PCA oracle $k$ times, were previously poorly-understood. 

Our main contribution is significantly sharper bounds on the approximation parameter degradation of deflation methods for $k$-PCA. For a quadratic form notion of approximation we term \emph{ePCA (energy PCA)}, we show deflation methods suffer no parameter loss. For an alternative well-studied approximation notion we term \emph{cPCA (correlation PCA)}, we tightly characterize the parameter regimes where deflation methods are feasible. Moreover, we show that in all feasible regimes, $k$-cPCA deflation algorithms suffer no asymptotic parameter loss for any constant $k$. We apply our framework to obtain state-of-the-art $k$-PCA algorithms robust to dataset contamination, improving prior work in sample complexity by a $\poly(k)$ factor.\blfootnote{\textit{Accepted for presentation at the
Conference on Learning Theory (COLT) 2024}}
\end{abstract}

\newpage
\tableofcontents
\newpage

\section{Introduction}\label{sec:intro}

Principal component analysis (PCA) stands as a ubiquitous technique in the areas of statistical analysis and dimensionality reduction (see e.g.\ \cite{pearson-pca} and the popular reference \cite{joliffe-pca}), offering a powerful and general-purpose means to extract information from complex datasets. In one of its most common use cases, applied to a distribution $\dist$, the $k$-PCA problem (Definition~\ref{def:exact_pca}) seeks to identify $k$ orthogonal \emph{principal components} (PCs) which capture the most variation in $\dist$. Succinctly put, the goal is to output the top-$k$ eigenvectors of the covariance $\msig$ of $\dist$,\footnote{The standard definition of an exact $k$-PCA allows for arbitrary tiebreaking if the dimensionality of the top-$k$ eigenspace is $> k$, see Definition~\ref{def:exact_pca} for a formal statement.} or an approximation thereof, where the key challenge is that we only have black-box sample access to $\msig$.

This statistical $k$-PCA problem is extremely well-studied, and due to its fundamental nature, many recent works have explored the development of $k$-PCA algorithms under various constraints. These variations include performing PCA in small space~\cite{jain2016streaming}, under adversarial corruptions~\cite{JamLT20, DKPP23}, when the distribution is heavy-tailed~\cite{minsker2015geometric, wei2017estimation}, or when the sampling process admits correlations~\cite{neeman2023concentration, kumar2023streaming}. In some problem settings, additional constraints might be imposed on the data distribution (e.g.\ sparsity~\cite{zou2006sparse}) or the output of the PCA algorithm (e.g.\ differential privacy~\cite{liu2022dp}, fairness~\cite{lee2023fair}). In all these scenarios, asking for an exact $k$-PCA is impossible due to our black-box access to the covariance matrix $\msig$ in question, so the goal is to output an approximate $k$-PCA, for appropriate notions of approximation.

Unfortunately, in many of the aforementioned settings, such as \cite{JamLT20, DKPP23, kumar2023streaming, liu2022dp}, results are only currently known for extracting a single approximate PC, rather than the more general approximate $k$-PCA problem. However, for most practical applications of PCA such as dimensionality reduction, extracting a single PC is not sufficient and hence these algorithms do not readily apply. Given the sophisticated techniques required to prove correctness of even extracting a single PC in these constrained statistical settings, it is often not immediately clear how to extend the analysis of these works to design a corresponding $k$-PCA algorithm.

One of the most commonly-proposed solutions to alleviate the burden of $k$-PCA algorithm design, assuming existence of a corresponding $1$-PCA algorithm, is a reduction-based approach, known as a \emph{deflation method} \cite{mackey2008deflation, allen2016lazysvd} (see Algorithm~\ref{alg:bbpca}). In this framework, the extraction of a single approximate PC is treated as a subroutine, and the deflation method repeatedly projects out the results of calls to this subroutine from the dataset, $k$ times in total. It is straightforward to see that if the subroutine returned an exact top eigenvector each time, this strategy would indeed succeed in outputting an exact $k$-PCA of the target matrix. As mentioned previously, under black-box access, the key challenge is to quantify the approximation degradation of this recursive procedure. 

Indeed, despite their being perhaps the most natural reduction-based $k$-PCA strategy, there has been surprisingly-limited work rigorously analyzing the performance of deflation methods. For example, standard analyses require strong gap assumptions on the target matrix (e.g.\ each of the top-$k$ eigenvalues being separated) \cite{LiZ15}, and this condition has even been quoted in a variety of works as being necessary \cite{LiZ15, MuscoM15, shamir2016pca, LiuKO22}. The challenge of obtaining provable guarantees for deflation methods (or small-block variants thereof) was an open problem stated in \cite{MuscoM15}. Our primary contribution is a direct analysis of the approximation parameter degradation of deflation methods, with no gap assumptions on the target matrix. This refutes the aforementioned conventional wisdom that deflation methods fail without strong spectral assumptions.

Our work is motivated by a closely-related result of \cite{allen2016lazysvd}, which primarily focused on applications in the \emph{white-box} setting where the target matrix is explicitly available.\footnote{For example, in this white-box setting, one can query matrix-vector products with the target matrix, which is not a realistic assumption if the target matrix is the covariance of a distribution we have sample access to.} However, the \cite{allen2016lazysvd} analysis loses large polynomial factors in the approximation quality parameters, under their notion of approximation (Definition~\ref{def:cpca}), which we will shortly compare to our results. Our focus in this work, in comparison, is to give tighter characterizations of when it is possible to obtain lossless or nearly-lossless reductions in the black-box PCA setting, typically of interest in statistical applications.
\subsection{Our results}\label{ssec:results}

We model black-box access to a target matrix $\mm \in \PSD^{d \times d}$ through the template in Algorithm~\ref{alg:bbpca}, a black-box $k$-to-$1$-PCA reduction.\footnote{For notation used throughout the paper, see Section~\ref{sec:prelims}; $\PSD^{d \times d}$ is the set of positive semidefinite $d \times d$ matrices.} Specifically, the algorithm only interacts with $\mm$ through an oracle $\oopca$, which we assume returns an approximate top eigenvector of $\mproj \mm \mproj$ for a specified projection matrix $\mproj$. This also explains why Algorithm~\ref{alg:bbpca} is often called a deflation method, as it repeatedly ``deflates'' directions from consideration as specified by recursively calling $\oopca$.

\begin{algorithm2e}[H]
	\caption{$\BBPCA(\mm, k, \oopca)$}
	\label{alg:bbpca}
	\DontPrintSemicolon
		\codeInput $\mm \in \PSD^{d \times d}$, $k \in [d]$, $\oopca$, an algorithm which takes as input $\mm \in \PSD^{d \times d}$ and a $d \times d$ orthogonal projection matrix $\mproj$ and returns a unit vector in $\R^d$ in $\linspan(\mproj)$;
            $\mproj_0 \gets \id_d$\;
		\For{$i \in [k]$}{
            $u_i \gets \oopca(\mm, \mproj_{i - 1})$\;\label{line:oopca}
            $\mproj_i \gets \mproj_{i - 1} - u_iu_i^\top$\;
		}
    \codeReturn $\mmu \gets \{u_i\}_{i \in [k]} \in \R^{d \times k}$
\end{algorithm2e}

To motivate our approximation-tolerant analysis, first consider the performance of Algorithm~\ref{alg:bbpca} when $\oopca$ returns an exact $1$-PCA of $\mproj \mm \mproj$ on inputs $(\mm, \mproj)$, using the following definition.

\begin{definition}[Exact PCA]\label{def:exact_pca}
     \label{def:exactPCA} For $\mm \in \PSD^{d \times d}$, the \emph{exact $k$-principal component analysis} ($k$-PCA) problem asks to return orthonormal $\mv \in \R^{d \times k}$ such that $\inprod{\mv\mv^\top}{\mm}$ is maximal.
\end{definition}

If $\oopca$ is assumed to be an exact $1$-PCA algorithm, then it is straightforward to show that Algorithm~\ref{alg:bbpca} is an exact $k$-PCA algorithm (indeed, this follows as a special case of our Theorem~\ref{thm:epca_reduce}). 

In Sections~\ref{sec:epca} and~\ref{sec:cpca}, our focus is analyzing the behavior of Algorithm~\ref{alg:bbpca} when $\oopca$ only has approximate $1$-PCA guarantees. 
We consider two types of approximation which are well-studied in the literature, that we call \emph{energy PCA} (or ePCA, see Definition~\ref{def:epca}) and \emph{correlation PCA} (or cPCA, see Definition~\ref{def:cpca}). We chose these definitions of approximate PCA because of their prevalence as error metrics in the PCA literature; to our knowledge, essentially all works on PCA in the statistical setting (i.e., under sample access from a distribution) give guarantees under one of these metrics, including the aforementioned works \cite{minsker2015geometric, jain2016streaming, wei2017estimation, JamLT20, liu2022dp, kumar2023streaming, DKPP23, lee2023fair}. Correspondingly, our main results concern Algorithm~\ref{alg:bbpca} when $\oopca$ is assumed to be an approximate $1$-ePCA oracle or $1$-cPCA oracle, notions formalized in Definitions~\ref{def:epca_oracle} and \ref{def:cpca_oracle}, and we want Algorithm~\ref{alg:bbpca} to respectively return a $k$-ePCA or a $k$-cPCA of $\mm$.  
\paragraph{ePCA.} The first approximation notion we consider is \emph{energy $k$-PCA}, which defines the performance of an approximate PCA algorithm by a sum of quadratic forms over components. Intuitively, this definition quantifies the amount of variance of the data captured by the returned subspace.

\begin{definition}[Energy $k$-PCA]\label{def:epca}
Let $k \in [d]$, $\mm \in \PSD^{d \times d}$, and $\eps \in [0, 1]$. We say orthonormal $\mmu \in \R^{d \times k}$ is an \emph{$\eps$-approximate energy $k$-PCA} (or, $\eps$-$k$-ePCA) of $\mm$ if 
\[\inprod{\mmu\mmu^\top}{\mm} \ge (1 - \eps)\norm{\mm}_k,\text{ where } \norm{\mm}_k \defeq \max_{\textup{orthonormal } \mv \in \R^{d \times k}} \inprod{\mv\mv^\top}{\mm}.\]
\end{definition}
Our guarantees for Algorithm~\ref{alg:bbpca} as a black-box ePCA reduction are stated in the following result.

\begin{definition}[ePCA oracle]\label{def:epca_oracle}
We say $\orepca$ is an \emph{$\eps$-approximate $1$-ePCA oracle} (or, $\eps$-$1$-ePCA oracle) if, on inputs $\mm \in \PSD^{d \times d}$, and $\mproj \in \R^{d \times d}$, where $\mproj$ is required to be an orthogonal projection matrix, $\orepca$ returns $u \in \R^d$, an $\eps$-$1$-ePCA of $\mproj \mm \mproj$, satisfying $u \in \linspan(\mproj)$.
\end{definition}

\begin{restatable}[$k$-to-$1$-ePCA reduction]{theorem}{restateepcathm}\label{thm:epca_reduce}
Let $\eps \in (0, 1)$, let $\mm \in \PSD^{d \times d}$, and let $\oopca$ be an $\eps$-$1$-ePCA oracle (Definition~\ref{def:epca_oracle}). Then, Algorithm~\ref{alg:bbpca}, when run on $\mm$, returns $\mmu \in \R^{d \times k}$, an $\eps$-$k$-ePCA of $\mm$.
\end{restatable}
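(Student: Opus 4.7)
The plan is to decompose the target quadratic form $\inprod{\mmu\mmu^\top}{\mm}$ into a sum over the vectors $u_i$ produced by the oracle, bound each summand from below using the oracle guarantee, and then relate the deflated top eigenvalues back to the eigenvalues of $\mm$ via a Courant-Fischer argument.

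First I would verify that the output $\mmu = [u_1 \mid \cdots \mid u_k]$ is orthonormal. By construction, $u_i$ is a unit vector lying in $\linspan(\mproj_{i-1})$, and since $\mproj_{i-1}$ is the projection onto the orthogonal complement of $\linspan(u_1,\dots,u_{i-1})$, we have $u_i \perp u_j$ for $j < i$. This gives $\mmu\mmu^\top = \sum_{i \in [k]} u_i u_i^\top$ and hence
\[\inprod{\mmu\mmu^\top}{\mm} = \sum_{i \in [k]} u_i^\top \mm u_i.\]
Since $\mproj_{i-1} u_i = u_i$, each summand equals $u_i^\top \mproj_{i-1}\mm\mproj_{i-1} u_i$, which by Definition~\ref{def:epca_oracle} is at least $(1-\eps)\lambda_1(\mproj_{i-1}\mm\mproj_{i-1})$.

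Next I would show the key inequality $\lambda_1(\mproj_{i-1}\mm\mproj_{i-1}) \ge \lambda_i(\mm)$ for each $i \in [k]$. Since $\linspan(\mproj_{i-1})$ is a subspace of $\R^d$ of dimension $d-(i-1)$, by the Courant-Fischer min-max characterization any such subspace intersects nontrivially with the span of any top-$i$ eigenspace of $\mm$, and thus
\[\lambda_1(\mproj_{i-1}\mm\mproj_{i-1}) = \max_{v \in \linspan(\mproj_{i-1}),\, \|v\|=1} v^\top \mm v \ge \lambda_i(\mm).\]
Summing over $i \in [k]$ gives $\sum_{i \in [k]} \lambda_1(\mproj_{i-1}\mm\mproj_{i-1}) \ge \sum_{i \in [k]} \lambda_i(\mm) = \norm{\mm}_k$, where the last equality follows from Definition~\ref{def:epca} applied with $\mv$ being the top-$k$ eigenvectors of $\mm$.

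Chaining these two bounds yields $\inprod{\mmu\mmu^\top}{\mm} \ge (1-\eps)\norm{\mm}_k$, which is exactly the $\eps$-$k$-ePCA guarantee. There is no real obstacle here; the proof is essentially lossless because the approximation factor $(1-\eps)$ can be factored out of every term in the sum, and the Courant-Fischer comparison is tight enough that telescoping the "leftover" spectral mass at each step exactly recovers $\norm{\mm}_k$. The most subtle point to state carefully is the min-max step comparing $\lambda_1$ of the deflated matrix to $\lambda_i$ of the original, since this is the one place the structure of $\mproj_{i-1}$ (namely its rank) enters nontrivially.
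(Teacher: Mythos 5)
Your proof is correct and essentially the same as the paper's: both decompose the quadratic form as $\sum_i u_i^\top \mm u_i$, apply the oracle guarantee termwise, and then lower-bound $\lambda_1(\mproj_{i-1}\mm\mproj_{i-1})$ by $\lambda_i(\mm)$ via a rank-counting eigenvalue interlacing argument (the paper phrases this as Cauchy interlacing in Lemma~\ref{lem:best_proj}, you phrase it as Courant--Fischer, but these are the same tool). The only cosmetic difference is that the paper packages the telescoping as an induction on $i$ while you sum directly.
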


Perhaps surprisingly, Theorem~\ref{thm:epca_reduce} states that there is no loss in approximation parameters via the reduction in Algorithm~\ref{alg:bbpca}, if our approximation notion is ePCA. This is optimal; for example, if $d \ge 2k$, and $\mm = \diag{\{\1_k, \0_{d - k}\}}$, then leting $\orepca$ repeatedly return $\sqrt{1 - \eps} e_i + \sqrt{\eps }e_{k + i}$ in each iteration $i \in [k]$, the result is an $\eps$-$k$-cPCA of $\mm$. Our proof of Theorem~\ref{thm:epca_reduce} in Section~\ref{sec:epca} is a simple application of Cauchy's interlacing theorem, but to our knowledge it was not previously known.

\paragraph{cPCA.} The other notion of approximation we consider is \emph{correlation $k$-PCA}, a popular definition in the literature \cite{shamir2016pca, allen2017first} which defines the performance of an approximate PCA algorithm in terms of the correlation with the small eigenspace of $\mm$, allowing for a gap in the definition of ``small.'' Let $\mv^{< \lam}(\mm)$ denote the orthonormal matrix spanning the eigenspace of $\mm$ corresponding to eigenvalues $< \lambda$. We define cPCA as follows, parameterized by a gap $\Gamma$ and a correlation $\Delta$.

\begin{definition}[Correlation $k$-PCA]\label{def:cpca}
Let $k \in [d]$, $\mm \in \PSD^{d \times d}$, $\Gam \in [0, 1]$, and $\Delta \in [0, k]$. We say orthonormal $\mmu \in \R^{d \times k}$ is a \emph{$(\Delta,\Gam)$-approximate correlation $k$-PCA} (or, $(\Delta,\Gam)$-$k$-cPCA) of $\mm$ if 
\[
\normf{\Par{\mv^{<(1 - \Gam)\lam_k(\mm)}(\mm)}^\top \mmu}^2 \le \Delta.
\]
\end{definition}
Definition~\ref{def:cpca} requires that $\mmu$ be almost entirely-contained in an eigenspace of $\mm$ corresponding to eigenvalues which are, at worst, barely outside the top-$k$ space. For example, in the gapped setting when $\lam_2(\mm) < (1 - \Gamma)\lam_1(\mm)$ and $k = 1$ (so $\mv^{<(1 - \Gamma)\lam_k(\mm)}$ is simply the subspace orthogonal to the top PC of $\mm$), Definition~\ref{def:cpca} implies recovery of the top PC in the $\sin^2$ error metric. In general, cPCA guarantees can be viewed as more geometrically explicit than ePCA counterparts (as they bound correlation with specific eigenspaces), and hence may be preferable when they are available.\footnote{However, in some black-box settings e.g.\ robust PCA (Section~\ref{ssec:robust}), certificates on specific eigenspaces are harder to achieve, and only weaker certificates such as quadratic forms have found algorithmic use (giving ePCA guarantees).}

The only analysis of the black-box deflation method for $k$-PCA (Algorithm~\ref{alg:bbpca}) we are aware of is due to \cite{allen2016lazysvd}, who gave a black-box $k$-to-$1$-cPCA reduction under the following definition.

\begin{definition}[cPCA oracle]\label{def:cpca_oracle}
We say $\orcpca$ is a \emph{$(\delta, \gamma)$-approximate}\footnote{We sometimes use lowercase Greek letters to denote parameters for the $1$-cPCA oracle, to differentiate from capital Greek letters used to parameterize overall $k$-PCA guarantees.} \emph{$1$-cPCA oracle} (or, $(\delta, \gamma)$-$1$-cPCA oracle) if, on inputs $\mm \in \PSD^{d \times d}$, and $\mproj \in \R^{d \times d}$, where $\mproj$ is required to be an orthogonal projection matrix, $\orcpca$ returns $u \in \R^{d}$, a $(\delta,\gamma)$-$1$-cPCA of $\mproj \mm \mproj$, satisfying $u \in \linspan(\mproj)$.
\end{definition}

Under Definitions~\ref{def:cpca},~\ref{def:cpca_oracle}, the main result Theorem 4.1(a) of \cite{allen2016lazysvd} can be rephrased as follows:  Algorithm~\ref{alg:bbpca} returns a $(\Delta, \Gamma)$-$k$-cPCA if $\oopca$ is assumed to be a $(\delta,\gamma)$-$1$-cPCA oracle, for
\begin{equation}\label{eq:azl_params}\gamma = \frac \Gamma 2,\; \delta = \Theta\Par{\frac{\Delta^2\Gamma^2}{k^4 \kappa_k(\mm)^2}},\text{ where } \kappa_k(\mm)\defeq \frac{\lam_1(\mm)}{\lam_k(\mm)}.\end{equation}
In the white-box setting, $(\delta, \gamma)$-$1$-cPCA runtimes typically scale polynomially in $\frac 1 \gamma$ and logarithmically in $\frac 1 \delta$ (e.g., Theorem 1, \cite{MuscoM15}), so \cite{allen2016lazysvd} focused on maintaining a low overhead in $\gamma$. However, known sample complexity lower bounds in the black-box statistical setting (e.g., Theorem 6, \cite{allen2017first}) show $\gtrsim \frac 1 {\gamma^2 \delta}$ samples are required to obtain a $(\delta, \gamma)$-$1$-cPCA, even under strong tail bounds such as sub-Gaussianity. Hence, the \cite{allen2016lazysvd} reduction in the black-box setting uses
\begin{equation}\label{eq:azl_reduction}\approx \frac{k^4 \kappa_k(\mm)^2}{\Delta^2 \Gamma^4},\end{equation}
samples for obtaining a $(\Delta, \Gamma)$-$k$-cPCA via Algorithm~\ref{alg:bbpca},
times problem-dependent factors such as the dimension, which is a $\poly(k, \Delta, \Gamma)$ factor worse than the lower bound. Our goal is to understand whether this parameter loss is inherent for black-box cPCA reductions.\footnote{Note that Theorem~\ref{thm:epca_reduce} shows no such loss is necessary, if our approximation metric is instead ePCA.} As a starting point, in Section~\ref{sec:prelims}, we prove the following simple results transferring guarantees between Definitions~\ref{def:epca} and~\ref{def:cpca}.

\begin{restatable}{lemma}{restateetoc}\label{lem:etoc}
If $\mmu \in \R^{d \times k}$ is an $\eps$-$k$-ePCA of $\mm \in \PSD^{d \times d}$, it is a $(\frac{\eps\norm{\mm}_k}{\Gamma\lam_k(\mm)}, \Gamma)$-$k$-cPCA of $\mm$ for any $\Gamma \in (0, 1)$.
\end{restatable}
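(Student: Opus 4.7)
The plan is to use the spectral decomposition of $\mm$ to convert the quadratic-form ePCA guarantee into a statement about how much of the ``mass'' of $\mmu$ lives in the low-eigenvalue subspace. Let $\mm = \sum_{i=1}^d \lam_i v_i v_i^\top$ with $\lam_1 \ge \cdots \ge \lam_d \ge 0$, and set $w_i \defeq \normf{v_i^\top \mmu}^2$. Then $\inprod{\mmu\mmu^\top}{\mm} = \sum_i \lam_i w_i$, and because $\mmu\mmu^\top$ is a rank-$k$ orthogonal projection we have the two structural constraints $w_i \in [0,1]$ and $\sum_i w_i = k$. The cPCA quantity we must bound is precisely $\alpha \defeq \sum_{i \in S^c} w_i$, where $S^c \defeq \{i : \lam_i < (1-\Gamma)\lam_k(\mm)\}$.

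Starting from the ePCA hypothesis, I would rewrite the deficit as
\[
\eps\norm{\mm}_k \;\ge\; \norm{\mm}_k - \sum_i \lam_i w_i \;=\; \sum_{i \le k} \lam_i(1 - w_i) \;-\; \sum_{i > k} \lam_i w_i.
\]
Let $\beta \defeq \sum_{i \le k}(1 - w_i) = \sum_{i > k} w_i$ (these are equal because $\sum_i w_i = k$), and note that $\alpha \le \beta$ since $S^c \subseteq \{i > k\}$. The first sum is lower-bounded by $\lam_k(\mm)\beta$ because each $\lam_i \ge \lam_k(\mm)$ for $i \le k$. For the second sum, split $\{i > k\}$ into its intersection with $S$ and with $S^c$: the $S$-part contributes at most $\lam_k(\mm)(\beta - \alpha)$ and the $S^c$-part contributes at most $(1 - \Gamma)\lam_k(\mm)\alpha$, so the whole second sum is at most $\lam_k(\mm)\beta - \Gamma\lam_k(\mm)\alpha$.

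Combining these two bounds, the $\lam_k(\mm)\beta$ terms cancel, leaving
\[
\eps\norm{\mm}_k \;\ge\; \Gamma\,\lam_k(\mm)\,\alpha,
\]
which rearranges to the desired $\alpha \le \frac{\eps\norm{\mm}_k}{\Gamma\lam_k(\mm)}$, exactly matching $\normf{(\mv^{<(1-\Gam)\lam_k(\mm)}(\mm))^\top \mmu}^2 \le \Delta$ with the claimed $\Delta$. The only nontrivial step is the clean cancellation of the $\lam_k(\mm)\beta$ term, which hinges on using $\sum_{i \le k}(1 - w_i) = \sum_{i > k} w_i$ together with the $\Gamma$-gap bookkeeping between $S$ and $S^c$; once this identity is in hand the rest is routine, so I do not anticipate any serious obstacle beyond correctly setting up the two-sided accounting of $\beta$.
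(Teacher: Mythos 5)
Your proof is correct and takes essentially the same route as the paper: a spectral decomposition of $\mm$ with $\Gamma$-gap bookkeeping between the eigenspaces above and below $(1-\Gamma)\lam_k(\mm)$. The only difference is cosmetic — where the paper invokes von Neumann's trace inequality to bound $\inprod{\mv^\top\mmu\mmu^\top\mv}{\mv^\top\mm\mv}$ by $\norm{\mm}_k - \Delta\lam_k(\mm)$, you derive the same bound elementarily via the scalar weights $w_i$ and the identity $\sum_{i\le k}(1-w_i)=\sum_{i>k}w_i$.
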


\begin{restatable}{lemma}{restatectoe}\label{lem:ctoe}
If $u \in \R^{d}$ is a $(\Delta, \Gamma)$-$1$-cPCA of $\mm \in \PSD^{d \times d}$, it is a $(\Gamma + \Delta)$-$1$-ePCA of $\mm$.
\end{restatable}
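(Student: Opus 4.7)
The plan is to split the unit vector $u$ along the spectral decomposition of $\mm$ at the threshold $(1-\Gamma)\lam_1(\mm)$ and lower bound the quadratic form $u^\top \mm u$ using only the ``large'' component. Concretely, let $\mv_\ge \defeq \mv^{\ge (1-\Gamma)\lam_1(\mm)}(\mm)$ denote an orthonormal basis for the eigenspace of $\mm$ with eigenvalues at least $(1-\Gamma)\lam_1(\mm)$, and let $\mv_< \defeq \mv^{<(1-\Gamma)\lam_1(\mm)}(\mm)$ be the complementary orthonormal basis. Decompose $u = u_\ge + u_<$ where $u_\ge \defeq \mv_\ge \mv_\ge^\top u$ and $u_< \defeq \mv_< \mv_<^\top u$; these components are orthogonal, so $\norm{u_\ge}^2 + \norm{u_<}^2 = 1$.

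The cPCA hypothesis directly gives $\norm{u_<}^2 = \norms{\mv_<^\top u}^2 \le \Delta$, and hence $\norm{u_\ge}^2 \ge 1 - \Delta$. Now I would expand
\[
u^\top \mm u \;=\; u_\ge^\top \mm u_\ge + u_<^\top \mm u_< \;\ge\; u_\ge^\top \mm u_\ge,
\]
where the cross terms vanish since $u_\ge$ and $u_<$ lie in orthogonal $\mm$-invariant subspaces, and the inequality uses $\mm \succeq \mzero$. Since $u_\ge$ lies in the span of eigenvectors of $\mm$ with eigenvalue $\ge (1-\Gamma)\lam_1(\mm)$, we get $u_\ge^\top \mm u_\ge \ge (1-\Gamma)\lam_1(\mm)\norm{u_\ge}^2 \ge (1-\Gamma)(1-\Delta)\lam_1(\mm)$.

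Finally, using the elementary inequality $(1-\Gamma)(1-\Delta) \ge 1 - \Gamma - \Delta$ (valid for all $\Gamma, \Delta \ge 0$), and recognizing that $\norm{\mm}_1 = \lam_1(\mm)$ when $k = 1$, we conclude $u^\top \mm u \ge (1 - (\Gamma + \Delta))\norm{\mm}_1$, which is exactly the definition of a $(\Gamma+\Delta)$-$1$-ePCA of $\mm$. There is no real obstacle here; the only subtlety to double-check is that the decomposition is well-defined when $\lam_1(\mm) = 0$ (in which case $\mm = \mzero$ and the statement is trivial) and that $\Delta \le 1$ so that the bound $\norm{u_\ge}^2 \ge 1 - \Delta$ is nonvacuous — if $\Delta > 1$ the ePCA guarantee $\Gamma + \Delta \ge 1$ holds trivially since $u^\top \mm u \ge 0$.
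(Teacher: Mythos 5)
Your proof is correct and is essentially the paper's own argument, just phrased by decomposing the vector $u = u_\ge + u_<$ rather than writing the same split as matrix inner products; both use the same threshold $(1-\Gamma)\lam_1(\mm)$, the same bound $\norm{u_<}^2 \le \Delta$ from the cPCA hypothesis, and the same final chain $(1-\Delta)(1-\Gamma)\lam_1(\mm) \ge (1-\Delta-\Gamma)\lam_1(\mm)$.
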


By combining Lemmas~\ref{lem:etoc} and~\ref{lem:ctoe} with our black-box ePCA reduction in Theorem~\ref{thm:epca_reduce}, it is straightforward to show Algorithm~\ref{alg:bbpca} returns a $(\Delta, \Gamma)$-$k$-cPCA if $\oopca$ is a $(\delta,\gamma)$-$1$-cPCA oracle, for
\begin{equation}\label{eq:our_params}\delta = \gamma = \Theta\Par{\frac{\Delta\Gamma}{k\kappa_k(\mm)}}.\end{equation}
Our transfer lemmas and Theorem~\ref{thm:epca_reduce} therefore already yield a sample complexity scaling as
\begin{equation}\label{eq:our_reduction}\approx \frac{k^3\kappa_k(\mm)^3}{\Delta^3\Gamma^3}\end{equation}
in standard statistical settings, which is incomparable to \eqref{eq:azl_reduction} in general but already improves upon it in, e.g., the dependences on $k$ and $\Gamma$. However, both \eqref{eq:azl_reduction} and~\eqref{eq:our_reduction} yield polynomial overheads in various approximation factors. We hence ask the natural question: can we characterize when Algorithm~\ref{alg:bbpca} serves as an asymptotically \emph{lossless} cPCA reduction, even when $k$ is a constant? 

In Section~\ref{sec:cpca}, we give our main result analyzing the use of Algorithm~\ref{alg:bbpca} as a black-box $k$-to-$1$-cPCA reduction. We show that if $\Delta \cdot \kappa_k(\mm)^2 \le \Gamma^2$ for a target matrix $\mm$, then Algorithm~\ref{alg:bbpca} indeed is an asymptotically lossless reduction for $k = \Theta(1)$. More precisely, we prove the following result.
\begin{restatable}[$k$-to-$1$-cPCA reduction]{theorem}{restatecpcathm}\label{thm:final_composition}
Let $(\Delta, \Gamma) \in (0, 1)$ and $\mm \in \PSD^{d \times d}$ satisfy $\Delta \cdot \kappa_k(\mm)^2 \le \Gamma^2$, and let $\oopca$ be a $(\delta,\gamma)$-$1$-cPCA oracle (Definition~\ref{def:cpca_oracle}), where $\delta \defeq \frac{1}{k^{\Theta(\log k)}} \cdot \Delta$, $ \gamma \defeq \frac{1}{\Theta(k^3)} \cdot \Gamma$, 
for appropriate constants.
Then, Algorithm~\ref{alg:bbpca} returns $\mmu \in \R^{d \times k}$, a $(\Delta,\Gamma)$-$k$-cPCA of $\mm$.
\end{restatable}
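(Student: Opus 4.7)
The plan is to analyze Algorithm~\ref{alg:bbpca} iteration-by-iteration, tracking how much correlation the accumulated vectors $\{u_1, \ldots, u_i\}$ have built up against the ``bad'' eigenspace $\mv^{<(1-\Gamma)\lam_k(\mm)}(\mm)$. The first technical ingredient I would prove is a \emph{deflation perturbation lemma}: if $u_1, \ldots, u_{i-1}$ are approximately contained in the top-$(i-1)$ eigenspace of $\mm$ in the sense that the squared Frobenius correlation with the tail eigenspace is at most some $\Delta'$, then $\lam_1(\mproj_{i-1}\mm\mproj_{i-1})$ lies in the narrow window $[\lam_i(\mm),\, \lam_i(\mm) + O(\sqrt{\Delta'}) \lam_1(\mm)]$. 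The lower bound is Courant-Fischer; the upper bound uses that $\mproj_{i-1}$ nearly annihilates the top-$(i-1)$ eigenmass of $\mm$ together with a Weyl-type perturbation. The assumption $\Delta \cdot \kappa_k(\mm)^2 \le \Gamma^2$ is exactly what guarantees this perturbation stays strictly below the $(1-\Gamma)\lam_k(\mm)$ threshold, so the oracle guarantee on the deflated matrix can be pulled back to a cPCA guarantee with respect to $\mm$ itself, at a slightly degraded gap.

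With the single-step translation in place, the overall structure I would use is a recursion that effectively halves $k$ at each level, which accounts for the $k^{\Theta(\log k)}$ dependence of $\delta$. Concretely, split the iterations into a prefix $\Prefix = \{1, \dots, \lceil k/2 \rceil\}$ and a residual $\Residual = \{\lceil k/2\rceil + 1, \dots, k\}$, and apply the inductive hypothesis at a coarser level with tolerances $\bDelta$, $\bGamma$ to bound the quality of the $\Prefix$ output. Then invoke the deflation perturbation lemma to show $\mproj_{\lceil k/2 \rceil}\mm\mproj_{\lceil k/2 \rceil}$ behaves like a fresh PCA instance of size $\lceil k/2 \rceil$ with the same condition number upper bound $\kappa_k(\mm)$, and apply the inductive hypothesis again to $\Residual$. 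Composing these bounds, each recursion level inflates the required $\delta$ by a $\poly(k)$ factor and shrinks $\gamma$ by a $\poly(k)$ factor, yielding, after $O(\log k)$ levels, the stated $\delta = \Delta/k^{\Theta(\log k)}$ and $\gamma = \Gamma/\Theta(k^3)$ parameters. The base case $k = 1$ is tautological from Definition~\ref{def:cpca_oracle}.

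The main obstacle is the parameter bookkeeping across recursion levels: the cPCA metric has a hard threshold at $(1-\Gamma)\lam_k(\mm)$, so arbitrarily small eigenvalue perturbations of deflated matrices can produce large jumps in reported correlation if they cross this cutoff. The inductive invariants must simultaneously (i) keep the effective gap $\bGamma_\ell$ bounded away from $\Gamma$ by enough slack that the perturbations at every level remain on the correct side of the cutoff, (ii) maintain $\bDelta_\ell \cdot \kappa_k(\mm)^2 \le \bGamma_\ell^2$ so that the deflation perturbation lemma keeps applying, and (iii) allow $\bDelta_\ell$ to compose additively rather than multiplicatively across levels. This is precisely where the hypothesis $\Delta \kappa_k(\mm)^2 \le \Gamma^2$ is used: it provides the uniform slack needed to propagate the three invariants through all $O(\log k)$ levels simultaneously, and its violation is the source of the parameter loss in the prior $\cite{allen2016lazysvd}$ reduction.
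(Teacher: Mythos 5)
Your high-level shape — recursive dyadic composition giving $O(\log k)$ levels and hence the $k^{\Theta(\log k)}$ loss, together with a translation step from the deflated matrix back to $\mm$ — matches the paper's strategy. However, the technical core of your plan, the ``deflation perturbation lemma,'' is much too weak to carry the proof, and the missing machinery is exactly where the real difficulty lies.

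Your lemma controls the \emph{eigenvalue} $\lam_1(\mproj_{i-1}\mm\mproj_{i-1})$ via Courant-Fischer and a Weyl-type argument. But the cPCA metric (Definition~\ref{def:cpca}) measures correlation with a particular \emph{eigenspace} $\mv^{<(1-\Gam)\lam_k(\mm)}(\mm)$, and a cPCA guarantee for the deflated matrix $\tmm$ bounds correlation against the corresponding eigenspace of $\tmm$, not $\mm$. Closeness of eigenvalues does not control closeness of eigenspaces in the gap-free setting, so your translation step has no justification as written. The paper's proof instead uses a gap-free Wedin-style matrix identity (Lemma~\ref{lemma:T1_T2_decomposition}) to bound $\normsf{\ms^\top\tml}$, and then Lemma~\ref{lemma:u1_u2_composition} reduces the composition to controlling a specific cross-term $\normsop{\mmu_1^\top\ml\mlam\ml^\top\tml}$. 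Bounding this cross-term is the actual work, and it requires a second object you never introduce: a ``head guarantee'' (Definition~\ref{def:head_guarantee}), which says that $[\mmu_1]_\perp$ has low correlation with the top of $\mm$'s spectrum — something the cPCA definition gives you for free only when the spectrum is gapped. Without a gap, the paper has to manufacture an approximate head guarantee by a separate inductive argument (Lemma~\ref{lem:no_gap_head_guarantee}).

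This is also why your uniform recursion is not enough: the paper must split into two qualitatively different regimes. It first identifies the indices where the top-$k$ spectrum of $\mm$ has a $\bGamma$-gap, uses a gapped composition (Lemma~\ref{lem:gap_comp}) across those splits, and then shows via Lemma~\ref{lem:well_conditioned_pieces} that between consecutive gaps the residual matrix is automatically well-conditioned ($\kappa \le 2$), so that the head-guarantee machinery in the gap-free composition (Lemma~\ref{lem:nogap_merge_two}) applies. Your proposal implicitly assumes a single recursion handles both cases, which is precisely the point at which a naïve deflation analysis breaks down and which the paper's two-track structure was designed to circumvent. Finally, the role of $\Delta\kappa_k(\mm)^2\le\Gamma^2$ in the paper is tied to bounding the cross-term after plugging into Lemma~\ref{lemma:u1_u2_composition} (see the final display of Lemma~\ref{lem:gap_merge_two}), not to keeping an eigenvalue on the correct side of a cutoff as you describe — that misidentifies where the hypothesis is actually consumed.
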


That is, when $\Delta \cdot \kappa_k(\mm)^2 \le \Gamma^2$, it suffices to take $\oopca$ to be a $(\delta, \gamma)$-$1$-cPCA oracle, for $\delta = \Theta_k(\Delta)$, $\gamma = \Theta_k(\Gamma)$. We complement Theorem~\ref{thm:epca_reduce} with an impossibility result (see Proposition~\ref{prop:delta_gg_gamsquare_bad}), showing that if the condition $\Delta \cdot \kappa_k(\mm)^2 \le \Gamma^2$ is violated, then Algorithm~\ref{alg:bbpca} fails to have this lossless reduction property, even when $k = 2$ and $d = 3$. In conclusion, Proposition~\ref{prop:delta_gg_gamsquare_bad} and Theorem~\ref{thm:final_composition} completely classify regimes where Algorithm~\ref{alg:bbpca} acts as a lossless cPCA reduction, up to the dependence on $k$.

An immediate question in light of Theorem~\ref{thm:final_composition} is to obtain a tighter characterization of the $k$ dependence. We view our quasipolynomial $k^{\Theta(\log k)}$ factor overhead in the parameter degradation, which is e.g.\ $d^{o(1)}$ for $k = \Theta(\log d)$, to be a proof-of-concept of Algorithm~\ref{alg:bbpca}'s utility as a $k$-cPCA algorithm design template; we conjecture it can be improved to be $\poly(k)$, which we leave open.

\paragraph{Applications.} To showcase our reductions, in Section~\ref{sec:applications} we directly apply Theorem~\ref{thm:epca_reduce} to obtain new algorithms for $k$-ePCA in the statistical setting, which are robust to adversarial corruptions to the dataset. Our first such result extends a recent near-linear time $1$-PCA algorithm of \cite{DKPP23}.

\begin{restatable}[Robust sub-Gaussian $k$-ePCA]{theorem}{restaterobsubg}\label{thm:robust_subg}
    Let $\dist$ be an $O(1)$-sub-Gaussian distribution on $\R^d$ with covariance $\mathbf \Sigma$.
    Let $\eps \in (0,\eps_0)$ for an absolute constant $\eps_0$, and $\delta \in (0,1)$. 
    Let $T$ be an $\eps$-corrupted set of samples from $\dist$ with $|T| = \Theta( \frac{d + \log(1/\delta)}{\eps^2 \log^2(1/\eps)} )$ for an appropriate constant. Algorithm $\alg_k$ (Corollary~\ref{cor:k-epca-stability}) run on inputs $T$, $\eps$, $\gamma = \Theta(\eps \log(\frac 1 \eps))$, $\delta$, and $k \in [d]$ outputs orthonormal $\mathbf U \in \R^{d \times k}$ such that, with probability $ \ge 1 - \delta$, $\mathbf{U}$ is an $O(\eps\log(\frac 1 \eps))$-$k$-ePCA of $\mathbf{\Sigma}$, in time $O\Pars{\frac{ndk}{\eps^2}\polylog\Pars{\frac d {\eps\delta}}}$.
\end{restatable}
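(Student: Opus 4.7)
The plan is to instantiate Algorithm~\ref{alg:bbpca} with the near-linear time robust $1$-ePCA algorithm of \cite{DKPP23} as the oracle $\oopca$, and then invoke Theorem~\ref{thm:epca_reduce} to lift the $1$-ePCA guarantee to a $k$-ePCA guarantee with no loss in the approximation parameter. Concretely, I would first recall the \cite{DKPP23} guarantee: on an $\eps$-corrupted sample set from an $O(1)$-sub-Gaussian distribution on $\R^d$ of size $\Theta((d+\log(1/\delta))/(\eps^2\log^2(1/\eps)))$, it returns, in time $\tO(nd/\eps^2)$, a unit vector that is an $O(\eps\log(1/\eps))$-$1$-ePCA of the population covariance $\msig$ with probability at least $1-\delta$.

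The key step is to verify that this routine serves as a valid $\eps$-$1$-ePCA oracle for every deflated target $\mproj \msig \mproj$ that Algorithm~\ref{alg:bbpca} queries. The \cite{DKPP23} analysis proceeds via a deterministic stability/resilience condition on the inlier samples, which is known (and standard in robust statistics) to be preserved under orthogonal projection: if the original samples satisfy the condition for directional second moments of $\msig$ across all unit $v\in\R^d$, then $\{\mproj X_i\}$ satisfies it for quadratic forms of $\mproj\msig\mproj$ across $v\in\linspan(\mproj)$, since $v^\top \mproj X X^\top \mproj v = v^\top X X^\top v$ for such $v$. Running their algorithm on the projected sample set therefore returns $u\in\linspan(\mproj)$ that is an $O(\eps\log(1/\eps))$-$1$-ePCA of $\mproj\msig\mproj$. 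This is the content I would place in Corollary~\ref{cor:k-epca-stability}, defining $\alg_k$ as Algorithm~\ref{alg:bbpca} instantiated with this oracle.

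Given this, the main theorem follows by two direct combinations. Applying Theorem~\ref{thm:epca_reduce} with approximation parameter $\eps' = \Theta(\eps\log(1/\eps))$ immediately yields that the output $\mathbf U \in \R^{d\times k}$ is an $O(\eps\log(1/\eps))$-$k$-ePCA of $\msig$, with no amplification in $\eps'$. The failure probability bound comes from a union bound over the $k$ oracle calls, each run at failure probability $\delta/k$, which is absorbed in the stated sample size up to a $\polylog$ factor. The runtime is $k$ times the per-call cost of $\tO(nd/\eps^2)$, giving $O(ndk/\eps^2\polylog(d/(\eps\delta)))$; the $k$ cheap projection updates $\mproj_i \gets \mproj_{i-1} - u_iu_i^\top$ can be applied implicitly to samples and contribute only lower-order terms. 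The principal obstacle is the projection-compatibility step: one must confirm that the specific stability certificate used in \cite{DKPP23} (and not merely abstract ePCA correctness on the raw distribution) descends to $\mproj\msig\mproj$, which I expect to reduce to the observation that the stability set of inliers is a deterministic condition on quadratic forms and hence projection-invariant on $\linspan(\mproj)$.
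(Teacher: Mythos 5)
Your proposal follows essentially the same route as the paper: instantiate $\oopca$ with the stability-based $1$-ePCA algorithm of \cite{DKPP23} (Proposition~\ref{prop:robust-ePCA-stability}), observe that the stability certificate (Definition~\ref{def:stability}) is preserved under orthogonal projection (this is exactly Lemma~\ref{lem:stable_deflation}), invoke Theorem~\ref{thm:epca_reduce} to get lossless composition, and union-bound over the $k$ oracle calls. One small imprecision worth noting: the union bound over calls does not touch the sample size at all. Stability of the sample set is a one-shot event (established once with probability $1-\delta$ via the \cite{JamLT20} concentration bound, Lemma~\ref{lem:subg_stable}), and deflation preserves it deterministically, so the same samples serve all $k$ calls; the $\delta/k$ only enters as the failure-probability input to $\alg_1$'s internal randomness, which affects the per-call runtime by a $\polylog(k)$ factor that is absorbed into $\polylog(d/(\eps\delta))$ since $k \le d$. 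Your phrasing ``absorbed in the stated sample size'' would suggest a $k$-dependent sample-complexity overhead, and the paper explicitly highlights that no such overhead arises --- this is one of the claimed advantages of the stability-based deflation route.
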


We then consider a heavy-tailed variant of the robust statistical estimation problem in Theorem~\ref{thm:robust_subg}, where rather than requiring our distribution to be sub-Gaussian, we ask that it is hypercontractive (Definition~\ref{def:hypercontractivity}), the de facto finite moment bound assumption used in the PCA literature (see e.g.\ \cite{Mendelson2018RobustCE}). Under this heavy-tailed assumption, we design a new robust $1$-ePCA algorithm following a \emph{stability} framework of \cite{DKPP23} (see Definition~\ref{def:stability}), and prove the following result.

\begin{restatable}[Robust hypercontractive $k$-ePCA]{theorem}{restaterobht}
\label{thm:robust-k-epca-heavy-tailed}
    For an even integer $p \ge 4$, let $\mathcal D$ be $(p,C_p)$-hypercontractive on $\R^d$
with mean $\0_d$ and covariance $\msig$.
    Let $\eps \in (0,\eps_0)$, $\delta \in (0,1)$, and 
    $\gamma = \Theta(C_p^2 \eps^{1 - \frac{2}{p}})$ such that $\gamma \in (0,\gamma_0)$ for absolute constants $\eps_0, \gamma_0$.
    Let $T$ be an $\eps$-corrupted set of samples from $\mathcal D$ with $|T| = \Theta(\beta (\frac{d \log d + \log(1/\delta)}{\gamma^2} ))$ for an appropriate constant, where $\beta \defeq C_p^6 \eps^{-\frac 2 p}$.
    Algorithm $\alg_k$ (Corollary~\ref{cor:k-epca-stability}) run on inputs $T$, $\eps$, $\gamma$, $\delta$, and $k \in [d]$ outputs orthonormal $\mathbf U \in \R^{d \times k}$ such that, with probability $\ge 1 - \delta$, $\mathbf{U}$ is an $O(\gamma)$-$k$-ePCA of $\mathbf{\Sigma}$, in time $O\Pars{\frac{ndk}{\gamma^2}\polylog\Pars{\frac{d}{\eps\delta}}}$.
\end{restatable}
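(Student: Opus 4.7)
The plan is to follow the two-stage template already applied in Theorem~\ref{thm:robust_subg}: first design a robust $1$-ePCA algorithm under the hypercontractive tail assumption, and then lift it to the $k$-ePCA setting losslessly via Theorem~\ref{thm:epca_reduce}. Because the ePCA reduction in Algorithm~\ref{alg:bbpca} is black-box and loses no approximation factor, the entire task reduces to producing a $1$-ePCA oracle whose error guarantee matches the target $O(\gamma)$ when invoked on any PSD matrix of the form $\mproj \hmsig \mproj$ for an orthogonal projector $\mproj$, where $\hmsig$ is a suitable empirical estimate built from the corrupted dataset $T$.

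The concrete workhorse is the stability-based framework of \cite{DKPP23} encapsulated by Corollary~\ref{cor:k-epca-stability}. First I would establish a stability certificate for the clean sample covariance under $(p,C_p)$-hypercontractivity: on a uniformly random subset $S$ of size $n = \Theta(\beta(d\log d + \log(1/\delta))/\gamma^2)$ of clean samples from $\dist$, the empirical covariance $\msig_S$ satisfies, with probability at least $1-\delta/2$, $\normop{\msig_S - \msig} \le \gamma$ and, simultaneously, that for every unit vector $v$ and every subset $S' \subset S$ with $|S'| \ge (1-\eps)|S|$, the variance along $v$ computed on $S'$ differs from $v^\top \msig v$ by at most $O(\gamma)$. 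The key estimate that drives the choice $\gamma = \Theta(C_p^2 \eps^{1-2/p})$ and the $\beta = C_p^6\eps^{-2/p}$ overhead is a truncated fourth-moment (more generally $p$-th moment) bound: for a $(p,C_p)$-hypercontractive distribution, the contribution of any $\eps$-fraction of samples to the directional second moment is bounded by $C_p^2 \eps^{1-2/p}$ by Hölder, and standard uniform-concentration arguments (VC / covering the unit sphere) using the hypercontractive moment bound give the stated sample complexity with the $\poly(C_p)\cdot \eps^{-2/p}$ overhead.

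Given this stability certificate on the clean samples, the $1$-ePCA oracle I would use is (essentially) the filtering/half-space procedure of \cite{DKPP23}: compute an approximate top eigenvector of $\mproj \hmsig \mproj$ using a near-linear-time method (e.g., a small number of power iterations restricted to $\linspan(\mproj)$), and iteratively downweight outliers that contribute disproportionately to the variance along the current direction. The stability of the clean samples implies that any $\eps$-corruption of $S$ yields a $\hmsig$ whose top eigenvector of $\mproj \hmsig \mproj$ is, in quadratic form, within an additive $O(\gamma)$ of the true maximum along $\linspan(\mproj)$; this is exactly the $O(\gamma)$-$1$-ePCA guarantee required by Definition~\ref{def:epca_oracle}. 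The near-linear runtime is obtained as in Theorem~\ref{thm:robust_subg}, using sketching and power-iteration primitives whose cost scales as $O(nd\,\polylog(d/\eps\delta)/\gamma^2)$. Finally, plugging this oracle into Algorithm~\ref{alg:bbpca} via Theorem~\ref{thm:epca_reduce} yields, with probability at least $1-\delta$, a $k$-dimensional orthonormal $\mmu$ satisfying the desired $O(\gamma)$-$k$-ePCA guarantee. Because each call to the oracle costs $O(nd/\gamma^2)$ times polylogarithmic factors and there are $k$ calls, the total runtime is $O(ndk/\gamma^2)\cdot\polylog(d/\eps\delta)$, matching the stated bound.

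The main obstacle is the stability / moment-concentration step for heavy-tailed distributions: sub-Gaussian concentration in Theorem~\ref{thm:robust_subg} delivers $\gamma = \tilde O(\eps)$ with $O(d/\eps^2)$ samples essentially out of the box, but under only a $p$-th moment bound one must (i) get the sharp directional truncation inequality giving $\gamma = \Theta(C_p^2 \eps^{1-2/p})$, (ii) absorb the heavy-tail-induced blowup into the explicit $\beta = C_p^6 \eps^{-2/p}$ prefactor in the sample size, and (iii) ensure that the resulting stability certificate is preserved under projection by an arbitrary orthogonal $\mproj$ (so that the oracle works at every iteration of Algorithm~\ref{alg:bbpca}), which follows because the relevant moment bounds are invariant under taking subspaces. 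Once these three technical points are pinned down, the remainder of the argument is a clean composition of the stability-based $1$-ePCA oracle with the lossless reduction of Theorem~\ref{thm:epca_reduce}.
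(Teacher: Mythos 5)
Your high-level plan matches the paper's: establish that (most of) the clean samples form an $(\eps,\gamma)$-stable set, plug the stability-based near-linear-time filter of \cite{DKPP23} (Proposition~\ref{prop:robust-ePCA-stability}) into the lossless ePCA reduction of Theorem~\ref{thm:epca_reduce} via Lemma~\ref{lem:stable_deflation}, and conclude. You also correctly identify Hölder as the source of the $C_p^2\eps^{1-2/p}$ rate, and you correctly observe that stability is preserved under projection (so the oracle works at every deflation step).

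However, the way you propose to prove the stability certificate does not deliver the sample complexity stated in the theorem, and the paper explicitly flags this distinction. You suggest proving stability for the raw hypercontractive samples via ``standard uniform-concentration arguments (VC / covering the unit sphere).'' As the paper notes in Section~\ref{ssec:ht_stability}, such matrix-Chernoff-style arguments give a \emph{multiplicative} $\log(\frac 1\delta)$ factor on the whole sample-size bound, i.e.\ something like $n \gtrsim \frac{d\log d\cdot \log(1/\delta)}{\gamma^2}$, whereas Theorem~\ref{thm:robust-k-epca-heavy-tailed} claims the sharper \emph{additive} dependence $n = \Theta\Par{\beta\cdot\frac{d\log d + \log(1/\delta)}{\gamma^2}}$. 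The paper's introduction even highlights this as a feature new to the heavy-tailed regime. Achieving it requires two ingredients your sketch omits: (i) an explicit clipping step (Definition~\ref{def:clipping}, Corollary~\ref{cor:clipping-and-truncation}) that maps $\dist$ to a distribution supported on a ball of radius $\sqrt R$ with controlled bias and a small ($O(\eps)$) clipping probability, so the clipped samples can be merged into the adversarial budget; and (ii) the refined stability lemma of \cite{DiaKP20} (Lemma~\ref{lem:stability-heavy-tailed-dkp}), which is a white-box argument exploiting the freedom to remove an $O(\eps)$-fraction of points, not a union/covering bound, and is what actually yields the additive $\log(1/\delta)$. These two steps are the content of Proposition~\ref{thm:sample-compelxity-stability-heavy-tailed} in the paper and are where the factor $\beta = C_p^6\eps^{-2/p}$ (and the $\log d$ term) come from; they are not recoverable by Hölder plus a generic covering-number computation alone. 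Once stability is in hand the rest of your outline (handing $T$ to Corollary~\ref{cor:k-epca-stability} after observing that the uniform distribution on $T$ is $O(\eps)$-TV-close to the stable clean distribution) is correct and matches the paper.
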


We show our approximation factor $\gamma$ in Theorem~\ref{thm:robust-k-epca-heavy-tailed} is tight up to constants in Lemma~\ref{lem:lower-bound-robust}. Moreover, we mention that even for uncorrupted data and $k = 1$, we are unaware of any other polynomial-time PCA algorithm with a similar additive dependence on $\log(\frac 1 \delta)$ in the sample complexity under a heavy-tailed assumption (rather than sub-Gaussianity), improving on \cite{minsker2015geometric}. 

Interestingly, both Theorems~\ref{thm:robust_subg} and~\ref{thm:robust-k-epca-heavy-tailed} incur no overhead in $k$ in their sample complexities, since our stability-based approach composes with deflation (see Lemma~\ref{lem:stable_deflation}). The only other robust $k$-PCA algorithm we are aware of (under the well-studied definition of robustness in Definition~\ref{def:contaminaton-model}) is due to Proposition 2.6, \cite{KonSKO20}, which applies to distributions satisfying an assumption which is a variant of $(4, C_4)$-hypercontractivity, but is not directly comparable. We give a comparison between our two results in the case $p = 4$ at the end of Section~\ref{ssec:robust}, where we show that under the more standard assumption of hypercontractivity, our recovery rate matches the rate obtained by \cite{KonSKO20}, while our sample complexity improves by a factor of up to $k^2$.

Finally, to show how to apply Theorem~\ref{thm:final_composition} to statistical settings, we design an online $k$-cPCA algorithm for heavy-tailed distributions in Theorem~\ref{thm:oja_k} by using Oja's algorithm \cite{oja1982simplified}. While Theorem~\ref{thm:oja_k} does not match the state-of-the-art due to \cite{allen2017first}, it follows a more black-box approach rather than requiring a custom sophisticated analysis. As new $1$-cPCA algorithms emerge in the literature, e.g.\ under the aforementioned input or output constraints, we are optimistic the roadmap in Theorem~\ref{thm:oja_k} will serve as a useful template for solving the corresponding $k$-cPCA problems.
\subsection{Our techniques}\label{ssec:techniques}

We now give a high-level overview of our proofs for Theorems~\ref{thm:epca_reduce} and~\ref{thm:final_composition}. Letting 
$\mm \in \PSD^{d \times d}$, the centerpiece of both our proofs is analyzing the approximation parameter degradation of composing $\mmu_{1} \in \R^{d \times k_{1}}$, an approximate $k_1$-PCA of $\mm$, and $\mmu_2 \in \R^{d \times k_2}$, an approximate $k_2$-PCA of $\tmm \defeq (\id_d - \mmu_1\mmu_1^\top)\mm(\id_d - \mmu_1\mmu_1^\top)$ with $\mmu_2 \perp \mmu_1$, to form an approximate $(k_1 + k_2)$-PCA of $\mm$. We describe how we analyze compositions under both types of approximation we consider.

\paragraph{Energy $k$-PCA.} In the case of ePCA, we analyze this composition using Lemma~\ref{lem:best_proj}, an application of Cauchy's interlacing theorem. Interestingly, we show that the composition of an $\eps$-$k_1$-ePCA and an $\eps$-$k_2$-ePCA in the sense described previously results in an $\eps$-$(k_1 + k_2)$-ePCA of $\mm$ (Corollary~\ref{cor:epca_compose}), i.e.\ we show that ePCA composition suffers no approximation parameter degradation. We defer the details of our proof to Section~\ref{sec:epca}, owing to the conciseness of the argument.

\paragraph{Correlation $k$-PCA.} In the case of cPCA, our proof relies on Lemma~\ref{lemma:u1_u2_composition}, which bounds the quality of a composition of two cPCAs. To describe this result, we introduce some notation. Let $\mmu_1$ be a $(\delta, \gamma)$-$k_1$-cPCA of $\mm$ and $\mmu_2$ be a $(\delta, \gamma)$-$k_2$-cPCA of $\tmm$, and let $\mmu \defeq \begin{pmatrix} \mmu_1 & \mmu_2 \end{pmatrix}$ and $k \defeq k_1 + k_2$. Moreover, for parameters $(\Delta,\Gamma)$ to be set, let $\ml \defeq \mv^{\ge (1 - \Gamma)\lam_{k}(\mm)}$ be the large eigenspace of $\mm$ that $(\Delta, \Gamma)$-$k$-cPCAs are required to be correlated with, let $\mlam$ be diagonal containing eigenvalues of $\mm$ corresponding to $\ml$, and let $\tml \defeq \mv^{\ge (1 - \gamma_2)\lam_{k_2}(\tmm)}(\tmm)$ be the analogous large eigenspace of the residual matrix $\tmm$. Then,  Lemma~\ref{lemma:u1_u2_composition} states that $\mmu$ is a $(\Delta, \Gamma)$-$k$-cPCA of $\mm$, for
\ba{
   \Delta \defeq 3\delta + \frac{4\delta}{\gamma^2} \cdot \frac{\normop{\mmu_{1}^\top\ml \mlam \ml^{\top}\tml}^{2}}{\lambda_{k}\bb{\mm}^{2}}, \; \Gamma \defeq 2\gamma.\label{eq:U1_U2_composition_sketch}
}
In other words, as long as we can appropriately control the quantity $\normsop{\mmu_{1}^\top\ml \mlam \ml^{\top}\tml}$, we can bound the parameter degradation (from $(\delta, \gamma)$ to $(\Delta, \Gamma)$) of composing two cPCAs. For instance, if this quantity is $O(\gamma \lam_k(\mm))$, then cPCA composition blows up both the $\delta$ and $\gamma$ parameters by constants.

Our proof of \eqref{eq:U1_U2_composition_sketch} applies the cPCA guarantees of $\mmu_1$ and $\mmu_2$ in turn, and uses a decomposition inspired by the gap-free Wedin theorem of \cite{allen2016lazysvd} to bound a cross term arising in the analysis. In light of \eqref{eq:U1_U2_composition_sketch}, we focus on bounding $\normsop{\mmu_{1}^\top\ml \mlam \ml^{\top}\tml}$, i.e.\ for arbitrary unit-norm $u \in \linspan(\mmu_1)$ and $w \in \linspan(\tml)$, our goal is to bound $u^\top \ml \mlam \ml^\top w$, where by definition of our algorithm, $u \perp w$.
Note that by our cPCA assumption, $u$ is highly-uncorrelated with $\ms \defeq \ml_\perp$, the small eigenspace of $\mm$. 

Our first observation is that when the spectrum of $\mm$ is gapped, in that $(1 - \gamma)\lam_{k_1}(\mm) > \lam_{k_1 + 1}(\mm)$, the cPCA guarantee of $\mmu_1$ affords an additional \emph{head guarantee} (Definition~\ref{def:head_guarantee}) by taking complements. Concretely, $\ml$ is simply the top-$k_1$ eigenspace of $\mm$ in the gapped setting, and because $\mmu_1$ captures almost all of the spectral mass of $\ml$, $w$ (which is orthogonal to $\mmu_1$) is highly-uncorrelated with $\ml$. In other words, we have shown that in the gapped setting, $u$ and $w$ lie almost entirely in complementary eigenspaces of $\mm$. By exploiting this fact along with $u \perp w$, it is straightforward to bound $u^\top \ml \mlam \ml^\top w = O(\gamma \lam_k(\mm))$,\footnote{In this step, we require that $(\Delta, \Gamma)$ do not lie in the infeasible parameter regime ruled out by Proposition~\ref{prop:delta_gg_gamsquare_bad}.} as we show in Lemma~\ref{lem:gap_merge_two}, so that $\Delta = O(\delta)$ in \eqref{eq:U1_U2_composition_sketch}.

Our second observation is that whenever the spectrum of $\mm$ is not gapped, but $\mm$ has a well-conditioned top-$k$ eigenspace, we can nonetheless use our head guarantee strategy to bound \eqref{eq:U1_U2_composition_sketch} with some amount of approximation parameter degradation, shown in Lemma~\ref{lem:nogap_merge_one}. To do this, we show that in this setting, we can artificially induce a head guarantee with a loss of $\approx k$ in the gap parameter by carefully inducting on intermediate cPCA guarantees (see Lemma~\ref{lem:no_gap_head_guarantee}). This lets us conclude that while $u, w$ do not lie in entirely-complementary eigenspaces, the spectrum of $\mm$ corresponding to the overlapping eigenspace lies in a very narrow range. By plugging in our bounds into \eqref{eq:U1_U2_composition_sketch}, we show that $\Delta = \delta \cdot \poly(k)$ and $\Gamma = 2\gamma$ suffices in the well-conditioned regime.

After establishing complementary results in Lemma~\ref{lem:gap_merge_two} (handling the gapped setting) and Lemma~\ref{lem:nogap_merge_one} (handling the well-conditioned setting), we show how to bucket the top-$k$ spectrum of $\mm$ so that one of these two analyses always applies. We then recursively apply our composition results via dyadic merging strategies, leading to Theorem~\ref{thm:final_composition}, where the quasipolynomial $k^{\Theta(\log(k))}$ loss arises due to the number of merges. We consider the possibility of $\poly(k)$-overhead black-box cPCA reductions to be an interesting open direction suggested by our work, which likely requires new ideas.
\subsection{Related work}\label{ssec:related}

We are aware of few papers considering deflation-based approaches to $k$-PCA without strong gap assumptions on the target matrix (see e.g.\ \cite{LiZ15} for a gap-based analysis). The most direct comparison is \cite{allen2016lazysvd}, who analyze 
Algorithm~\ref{alg:bbpca} when $\oopca$ is a $1$-cPCA oracle, giving guarantees summarized in \eqref{eq:azl_params}. Our Theorem~\ref{thm:epca_reduce} shows the lossiness in the \cite{allen2016lazysvd} reduction is unnecessary if our approximations are instead measured through Definition~\ref{def:epca} (ePCA), and yields a competitive guarantee \eqref{eq:our_params} for cPCA. 
Our Theorem~\ref{thm:final_composition} further removes all overhead in this cPCA reduction for $k = \Theta(1)$, in the valid parameter regimes not ruled out by our lower bound,  Proposition~\ref{prop:delta_gg_gamsquare_bad}.

We also draw inspiration from the extensive body of work focusing on computationally efficient and statistically optimal algorithms for PCA, e.g.\ \cite{MuscoM15, garber2016faster, shamir2016pca} which give runtime bounds in the white-box setting, and e.g.\ \cite{HardtP14, SaRO15, BalcanDWY16, shamir2016pca, jain2016streaming, LiWLZ18} which focus on offline and online variants of statistical PCA. In particular, the gap-free notion of approximation in our cPCA definition is patterned off analogous definitions in this latter line. Further, several other notions of PCA approximation have been proposed in the white-box setting, e.g.\ Eqs.\ (1)-(3) of \cite{MuscoM15}; however, all such results we are aware of use matrix-vector products. Our motivation is analyzing Algorithm~\ref{alg:bbpca} as a reduction, so we focused on error metrics where black-box $1$-PCA guarantees already exist, though it is interesting to prove similar reductions for other metrics. 

Additionally, we mention the contemporaneous work of \cite{meyer2024unreasonable}, which gives analyses of variants of $k$-to-$1$ reductions for block Krylov methods targeting low-rank approximation, which is very close in spirit to our work. However, \cite{meyer2024unreasonable} again requires access to matrix-vector products for their reductions, which makes their results not directly applicable in our black-box setting.

Finally, iterative deflation has been popular in the context of sparse PCA~\cite{d2004direct,moghaddam2006generalized, mackey2008deflation,yuan2013truncated}, as designing direct $k$-PCA methods is challenging in this setting. However, we are not aware of any general theoretical guarantees on this technique.
\section{Preliminaries}\label{sec:prelims}

For $n \in \N$ we let $[n] \defeq \{i \in \N \mid i \le n\}$. We denote the $i^{\text{th}}$ canonical basis vector in $\R^d$ by $e_i$. 

We denote matrices in capital boldface letters. For matrices $\ma, \mb$ with the same number of rows, we let $\begin{pmatrix} \ma & \mb \end{pmatrix}$ denote the horizontal concatenation of the matrices. We let $\id_d$ be the $d \times d$ identity matrix and $\mzero_d$ be the all-zeroes matrix. We say $\mv \in \R^{d \times r}$ is orthonormal if its columns are orthonormal, i.e.\ $\mv^\top\mv = \id_r$; note that $d \ge r$ necessarily in this case. We let $\Sym^{d \times d}$ be the set of real symmetric $d \times d$ matrices, which we equip with the Loewner partial ordering $\preceq$ and the Frobenius inner product $\inprod{\mm}{\mn} = \Tr(\mm\mn)$. We let $\PSD^{d \times d}$ and $\PD^{d \times d}$ respectively denote the positive semidefinite and positive definite subsets of $\Sym^{d \times d}$. We say $\ma \in \R^{m \times n}$ has singular value decomposition (SVD) $\mmu \msig \mv^\top$ if $\ma = \mmu\msig\mv^\top$ has rank-$r$, $\msig \in \R^{r \times r}$ is diagonal, and $\mmu \in \R^{m \times r}$, $\mv \in \R^{n \times r}$ are orthonormal. We refer to the $i^{\text{th}}$ largest eigenvalue of $\mm \in \Sym^{d \times d}$ by $\lam_i(\mm)$, and the $i^{\text{th}}$ largest singular value of $\mm \in \R^{m \times n}$ by $\sigma_i(\mm)$. For $\mm \in \PSD^{d \times d}$ and $k \in [d]$ we use $\norm{\mm}_k \defeq \sum_{i \in [k]} \lam_i(\mm)$ to refer to its $k$-Ky Fan norm, and $\normop{\mm} \defeq \sigma_1(\mm)$ is the $\ell_2$ operator norm of $\mm \in \R^{m \times n}$. For $\ma \in \R^{m \times n}$, the span of $\ma$'s columns, i.e.\ $\{\ma v \mid v \in \R^n\}$, is denoted $\linspan(\ma)$. For $\mm \in \PD^{d \times d}$ and $k \in [d]$, we define
\[\kappa_k(\mm) \defeq \frac{\lam_1(\mm)}{\lam_k(\mm)},\]
to be the $k$-condition number of $\mm$, i.e.\ the condition number of the top-$k$ eigenspace of $\mm$, and we abbreviate $\kappa(\mm) \defeq \kappa_d(\mm)$ for the (standard) condition number of $\mm$.

For a subspace $V \subseteq \R^d$, we let $\mproj_V \in \R^{d \times d}$ be the orthogonal projection matrix onto $V$. We often identify a subspace $V \subseteq \R^d$ with a generating matrix $\mv \in \R^{d \times r}$, where $r \defeq \dim(V)$, such that $\mv$ has orthonormal columns and $\linspan(\mv) = V$. In this case, note that $\mproj_V = \mv\mv^\top$ for any valid $\mv$. We write $U \perp V$ for subspaces $U, V \subset \R^d$ if every element of $U$ is orthogonal to every element of $V$. Similarly, for two orthonormal matrices $\mmu, \mv$ with the same number of rows, we write $\mmu \perp \mv$ if $\linspan(\mmu) \perp \linspan(\mv)$. For orthonormal $\mmu \in \R^{d \times r}$, we write $\mmu_\perp$ to mean any orthonormal $d \times (d - r)$ matrix so that columns of $\mmu$ and $\mmu_\perp$ form a basis of $\R^d$, and define $U_\perp$ to be the orthogonal complement of a subspace $U \subseteq \R^d$. For $\mm \in \Sym^{d \times d}$ and $\lam > 0$, we denote the subspace spanned by eigenvectors of $\mm$ corresponding to eigenvalues at least $\lambda$ by $V^{\ge \lam}(\mm)$, and the corresponding orthonormal generating matrix by $\mv^{\ge \lam}(\mm)$. We similarly define $V^{>\lam}(\mm)$, $V^{\le\lam}(\mm)$, etc.\ 

\paragraph{Linear algebra preliminaries.} We use the following well-known linear algebra results.

\begin{fact}[Cauchy's interlacing theorem]\label{fact:cauchy_interlacing}
Let $d \in \N$ and $r \in [d]$.
For any $\mm \in \Sym^{d \times d}$ and orthonormal $\mv \in \R^{d \times r}$,
$\lam_{d - r + i}(\mm) \le \lam_i(\mv^\top\mm\mv) \le \lam_i(\mm)$ for all $i \in [r]$.
\end{fact}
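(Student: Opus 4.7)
The plan is to deduce both inequalities of Cauchy's interlacing theorem from the Courant-Fischer variational characterization of eigenvalues. Concretely, for any $\mn \in \Sym^{n \times n}$,
\[
\lam_j(\mn) = \max_{\substack{S \subseteq \R^n \\ \dim S = j}} \min_{\substack{x \in S \\ \|x\| = 1}} x^\top \mn x = \min_{\substack{S \subseteq \R^n \\ \dim S = n - j + 1}} \max_{\substack{x \in S \\ \|x\| = 1}} x^\top \mn x.
\]
The key observation enabling the reduction is that because $\mv \in \R^{d \times r}$ is orthonormal, the linear map $y \mapsto \mv y$ is an isometry of $\R^r$ onto $\linspan(\mv) \subseteq \R^d$, and the associated quadratic form satisfies $y^\top (\mv^\top \mm \mv) y = (\mv y)^\top \mm (\mv y)$. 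Thus any $j$-dimensional subspace $T \subseteq \R^r$ corresponds to a $j$-dimensional subspace $\mv T \subseteq \R^d$ on which the relevant quadratic form is unchanged.

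For the upper bound $\lam_i(\mv^\top \mm \mv) \le \lam_i(\mm)$, I would apply the max-min form to $\mv^\top \mm \mv \in \Sym^{r \times r}$ to obtain
\[
\lam_i(\mv^\top \mm \mv) = \max_{\substack{T \subseteq \R^r \\ \dim T = i}} \min_{\substack{y \in T \\ \|y\| = 1}} (\mv y)^\top \mm (\mv y).
\]
Each $\mv T$ is an $i$-dimensional subspace of $\R^d$, so the outer maximum is taken over a sub-family of the $i$-dimensional subspaces of $\R^d$ featured in Courant-Fischer applied to $\mm$; enlarging the family can only increase the value, yielding $\lam_i(\mm)$ on the right.

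For the lower bound $\lam_{d - r + i}(\mm) \le \lam_i(\mv^\top \mm \mv)$, I would instead use the dual min-max form, which gives
\[
\lam_i(\mv^\top \mm \mv) = \min_{\substack{T \subseteq \R^r \\ \dim T = r - i + 1}} \max_{\substack{y \in T \\ \|y\| = 1}} (\mv y)^\top \mm (\mv y).
\]
Now each $\mv T$ is an $(r - i + 1)$-dimensional subspace of $\R^d$, so enlarging the outer minimum to range over all such subspaces of $\R^d$ can only decrease the value. Matching dimensions in Courant-Fischer with $n = d$ and $n - j + 1 = r - i + 1$ forces $j = d - r + i$, so the resulting lower bound is exactly $\lam_{d - r + i}(\mm)$.

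The argument is conceptually straightforward once one commits to the Courant-Fischer principle; the only obstacle is bookkeeping. Specifically, one must be careful to choose max-min for the upper bound and min-max for the lower bound (so that, in each case, the restricted family of test subspaces $\{\mv T\}$ produces the correct direction of inequality), and one must correctly track the subspace dimension $r - i + 1$ in $\R^r$ versus the corresponding eigenvalue index $d - r + i$ in $\R^d$ via the identity $n - j + 1 = r - i + 1$.
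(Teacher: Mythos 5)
The paper records Cauchy's interlacing theorem as a Fact without proof, citing it as a well-known linear algebra result, so there is no in-text argument to compare against. Your proof via the Courant--Fischer variational characterization is the standard one and is correct: the isometry $y \mapsto \mv y$ (valid since $\mv^\top\mv = \id_r$ gives $\|\mv y\|_2 = \|y\|_2$) identifies $j$-dimensional test subspaces of $\R^r$ with $j$-dimensional subspaces of $\R^d$ on which the quadratic forms of $\mv^\top\mm\mv$ and $\mm$ agree; taking the max-min form shows the restricted family can only lower the max (giving $\lam_i(\mv^\top\mm\mv) \le \lam_i(\mm)$), while the min-max form with the dimension match $d - j + 1 = r - i + 1 \Rightarrow j = d - r + i$ shows the restricted family can only raise the min (giving $\lam_i(\mv^\top\mm\mv) \ge \lam_{d-r+i}(\mm)$). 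The bookkeeping you flag is handled correctly on both sides.
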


\begin{fact}[Weyl's perturbation inequality]\label{fact:weyl}
Let $\mm, \md \in \Sym^{d \times d}$ have $\normop{\md} \le \delta$. Then,
\[\lam_j(\mm) -\delta \le \lam_j(\mm + \md) \le \lam_j(\mm) + \delta, \text{ for all } j \in [d].\]
\end{fact}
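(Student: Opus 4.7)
The plan is to derive Weyl's inequality directly from the Courant--Fischer (min--max) characterization of eigenvalues of symmetric matrices, namely
\[
\lam_j(\ma) \;=\; \max_{\substack{S \subseteq \R^d \\ \dim(S) = j}} \; \min_{\substack{v \in S \\ \norm{v} = 1}} v^\top \ma v
\qquad \text{for every } \ma \in \Sym^{d \times d}.
\]
This is the standard textbook route, and I do not anticipate needing anything beyond it.

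First I would unpack the hypothesis $\normop{\md} \le \delta$. Since $\md$ is symmetric, its operator norm coincides with $\max_{\norm{v}=1} |v^\top \md v|$, so the hypothesis is equivalent to the pointwise bound $|v^\top \md v| \le \delta$ for every unit vector $v \in \R^d$. Rearranging, for every such $v$,
\[
v^\top \mm v - \delta \;\le\; v^\top (\mm + \md) v \;\le\; v^\top \mm v + \delta.
\]

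Next I would push this pointwise bound through the two optimizations in the min--max formula for $\lam_j(\mm + \md)$. Fix an arbitrary $j$-dimensional subspace $S \subseteq \R^d$; since adding the constant $\pm \delta$ to every value in a set commutes with taking the minimum over unit vectors $v \in S$, the displayed pointwise bound yields
\[
\min_{\substack{v \in S \\ \norm{v} = 1}} v^\top \mm v \;-\; \delta \;\le\; \min_{\substack{v \in S \\ \norm{v} = 1}} v^\top(\mm + \md) v \;\le\; \min_{\substack{v \in S \\ \norm{v} = 1}} v^\top \mm v \;+\; \delta.
\]
Taking the maximum over all $j$-dimensional subspaces $S$ (again, shifting by a constant commutes with the max), the outer and inner terms become $\lam_j(\mm) - \delta$ and $\lam_j(\mm) + \delta$, while the middle term becomes exactly $\lam_j(\mm + \md)$. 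This gives the two claimed inequalities simultaneously for every $j \in [d]$.

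There is no serious obstacle: the only thing to verify carefully is that the constants $\pm\delta$ may be pulled outside of both the inner $\min$ and outer $\max$, which is immediate because shifting a function by a constant shifts its min and its max by the same constant. An alternative, essentially equivalent, route would be to apply Cauchy's interlacing (Fact~\ref{fact:cauchy_interlacing}) to $\mm$ and $\mm + \md$ in tandem with the bound $\lam_1(\md), -\lam_d(\md) \in [-\delta,\delta]$; I would default to the Courant--Fischer argument above since it is the shortest self-contained derivation.
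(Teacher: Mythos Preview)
Your proof is correct. The paper, however, does not supply a proof of this statement: it is listed under ``Linear algebra preliminaries'' as a well-known fact and invoked as such. Your Courant--Fischer argument is the standard textbook derivation and would serve perfectly well if a proof were required.
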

\begin{fact}
    \label{fact:trace_cs}
    For any $\mathbf{A}, \mathbf{B} \in \mathbb{R}^{m \times n}$, $2\normsf{\ma}^2 + 2\normsf{\mb}^2 \ge \normsf{\ma + \mb}^2$.
\end{fact}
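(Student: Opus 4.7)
The plan is to exploit that the Frobenius norm is induced by the Frobenius inner product $\inprods{\ma}{\mb} = \Tr(\ma^\top \mb)$, so that $(\R^{m \times n}, \normsf{\cdot})$ is a Hilbert space and the desired inequality reduces to a standard ``two-norm-squared'' bound in any inner product space. First, I would expand using bilinearity:
\[
\normsf{\ma + \mb}^2 = \normsf{\ma}^2 + 2\inprods{\ma}{\mb} + \normsf{\mb}^2,
\]
which isolates a single cross term $2\inprods{\ma}{\mb}$ responsible for all of the slack in the claimed inequality.

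Next, I would bound the cross term in one of two equivalent ways. The first option is to apply Cauchy--Schwarz in the Frobenius inner product, $\inprods{\ma}{\mb} \le \normsf{\ma}\normsf{\mb}$, followed by AM-GM, $2\normsf{\ma}\normsf{\mb} \le \normsf{\ma}^2 + \normsf{\mb}^2$; substituting into the expansion immediately yields $\normsf{\ma + \mb}^2 \le 2\normsf{\ma}^2 + 2\normsf{\mb}^2$. The second option is to invoke the parallelogram identity
\[
\normsf{\ma + \mb}^2 + \normsf{\ma - \mb}^2 = 2\normsf{\ma}^2 + 2\normsf{\mb}^2,
\]
which follows by expanding both squared norms on the left by bilinearity and observing the cross terms cancel, and then to drop the nonnegative quantity $\normsf{\ma - \mb}^2$ from the left-hand side.

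I do not anticipate any genuine obstacle here: the statement is a direct consequence of Hilbert-space structure and the argument is essentially a one-line manipulation after the initial expansion. The only minor item to verify is that $\inprods{\cdot}{\cdot}$ defined via $\Tr(\ma^\top \mb)$ satisfies the Cauchy--Schwarz inequality, which is immediate since $\Tr(\ma^\top \ma) = \normsf{\ma}^2$ makes $\inprods{\cdot}{\cdot}$ a bona fide inner product on $\R^{m \times n}$.
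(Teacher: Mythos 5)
Your proof is correct. The paper states this as a \emph{Fact} among its linear algebra preliminaries and supplies no proof, treating it as standard; your argument (expand $\normsf{\ma+\mb}^2$ by bilinearity of the Frobenius inner product, then bound the cross term via Cauchy--Schwarz and AM-GM, or equivalently drop $\normsf{\ma-\mb}^2$ from the parallelogram identity) is exactly the routine justification that the authors are implicitly invoking, and either of your two variants is sound and complete.
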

\begin{fact}
    \label{fact:generalized_mean}
    For any $\{x_i\}_{i \in [n]} \subset \R_{> 0}$ and $\{w_i\}_{i \in [n]} \subset \R_{\ge 0}$ with $\sum_{i \in [n]} w_i = 1$, we have for all $p \ge 1$,
    \[\bb{\sum_{i \in [n]} w_ix_i^p}^{\frac 1 p} \ge \inprod{w}{x}.\]
\end{fact}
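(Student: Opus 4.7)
The plan is to derive this as an immediate consequence of Jensen's inequality applied to the convex function $f(x) = x^p$ on $\mathbb{R}_{>0}$. The observation driving the proof is that, since $p \ge 1$, the second derivative $p(p-1)x^{p-2}$ is nonnegative on $\mathbb{R}_{>0}$, so $f$ is convex there. Combined with the hypotheses that $w_i \ge 0$ and $\sum_{i \in [n]} w_i = 1$, the weights $\{w_i\}$ define a discrete probability distribution on $\{x_i\}$, setting us up perfectly to invoke Jensen.

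Concretely, I would proceed in three short steps. First, apply Jensen's inequality to $f(x) = x^p$ at the points $\{x_i\}$ with weights $\{w_i\}$ to obtain
\[
\sum_{i \in [n]} w_i x_i^p \;=\; \sum_{i \in [n]} w_i f(x_i) \;\ge\; f\Par{\sum_{i \in [n]} w_i x_i} \;=\; \Par{\sum_{i \in [n]} w_i x_i}^p.
\]
Second, note that since each $x_i > 0$ and $w_i \ge 0$, both sides are nonnegative, so the monotonicity of the map $t \mapsto t^{1/p}$ on $\mathbb{R}_{\ge 0}$ preserves the inequality upon taking $p$-th roots. Third, recognize that $\sum_{i \in [n]} w_i x_i = \inprod{w}{x}$, yielding the claim.

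As an alternative, one could instead invoke Hölder's inequality with conjugate exponents $p$ and $q = p/(p-1)$, writing $\sum_i w_i x_i = \sum_i (w_i^{1/p} x_i) \cdot w_i^{1/q}$ and bounding by $\Par{\sum_i w_i x_i^p}^{1/p} \Par{\sum_i w_i}^{1/q}$, which equals $\Par{\sum_i w_i x_i^p}^{1/p}$ since $\sum_i w_i = 1$. Either route is essentially immediate and there is no genuine obstacle; the only minor point to flag is that positivity of the $x_i$ ensures $x_i^p$ is unambiguously defined for non-integer $p$, and $0 \le w_i$ with $\sum w_i = 1$ is exactly what Jensen needs. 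The statement is simply the weighted power-mean inequality $M_p(x) \ge M_1(x)$ for $p \ge 1$ in disguise.
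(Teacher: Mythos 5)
Your proof is correct. The paper states Fact~\ref{fact:generalized_mean} without proof (as it does for the other well-known linear algebra and analytic facts in Section~\ref{sec:prelims}), and the Jensen argument you give — convexity of $x \mapsto x^p$ for $p \ge 1$, the weights forming a probability distribution, then taking $p$-th roots — is the standard derivation of this weighted power-mean inequality; the Hölder alternative you sketch is equally valid.
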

\begin{fact}
\label{fact:matrix-inner-prodct}
    For $\ma, \mb \in \Sym^{d \times d}$ with $\ma \preceq \mb$ and $\mm \in \PSD^{d \times d}$, $\Tr(\ma\mm) \leq \Tr(\mb\mm) $. 
\end{fact}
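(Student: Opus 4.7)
The plan is to reduce the claim to the single statement that the trace of a positive semidefinite matrix is nonnegative, which follows immediately from the fact that its eigenvalues are all nonnegative and its trace equals their sum. First I would rewrite the desired inequality $\Tr(\ma\mm) \le \Tr(\mb\mm)$ as the equivalent inequality $\Tr((\mb - \ma)\mm) \ge 0$, using linearity of the trace. By hypothesis $\ma \preceq \mb$, so by definition of the Loewner order $\mb - \ma \in \PSD^{d \times d}$.

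Next I would exploit the fact that $\mm \in \PSD^{d \times d}$ admits a symmetric PSD square root $\mm^{1/2}$ satisfying $\mm = \mm^{1/2} \mm^{1/2}$. Using the cyclic property of the trace, I would compute
\[
\Tr((\mb - \ma)\mm) \;=\; \Tr\bigl((\mb - \ma)\,\mm^{1/2}\mm^{1/2}\bigr) \;=\; \Tr\bigl(\mm^{1/2}(\mb - \ma)\mm^{1/2}\bigr).
\]
The matrix $\mm^{1/2}(\mb - \ma)\mm^{1/2}$ is symmetric, and it is positive semidefinite because for any $v \in \R^d$,
\[
v^\top \mm^{1/2}(\mb - \ma)\mm^{1/2} v \;=\; (\mm^{1/2}v)^\top (\mb - \ma)(\mm^{1/2}v) \;\ge\; 0,
\]
where the last inequality uses $\mb - \ma \succeq \mzero_d$. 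Hence its trace, which equals the sum of its (nonnegative) eigenvalues, is nonnegative, completing the proof.

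The argument has no real obstacle; the only subtlety worth flagging is the use of the PSD square root, which is standard but could alternatively be avoided by diagonalizing $\mm = \mq \mlam \mq^\top$ with $\mlam \succeq \mzero_d$ diagonal and writing $\Tr((\mb - \ma)\mm) = \sum_{i} \lam_i(\mm) \cdot \bigl[\mq^\top(\mb - \ma)\mq\bigr]_{ii}$, each summand of which is nonnegative since the diagonal entries of a PSD matrix are nonnegative. Either route yields the fact in a couple of lines.
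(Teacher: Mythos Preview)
Your proof is correct and is the standard argument for this well-known inequality. The paper states this as a basic fact without proof, so there is nothing to compare against; your square-root-and-cyclicity argument (and the alternative diagonalization route you sketch) is exactly how one would justify it.
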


Finally, we prove the following basic observations relating Definitions~\ref{def:epca} and~\ref{def:cpca}, stated earlier, which allow us to convert an ePCA guarantee to a cPCA one and vice-versa. 

\restateetoc*
\begin{proof}
Define $\mv \defeq \mv^{\ge (1 - \Gam)\lam_k(\mm)}{(\mm)}$, $\mw \defeq \mv^{<(1 - \Gam)\lam_k(\mm)}{(\mm)}$, and $\Delta \defeq \normsf{\mw^\top\mmu}^2$. Note $\Tr(\mv^\top\mmu\mmu^\top\mv)$ $= k - \Delta$, and $\mv^\top\mmu\mmu^\top\mv$ has $\le k$ nonzero eigenvalues, all in $[0, 1]$. We bound
\ba{
(1 - \eps)\norm{\mm}_k &\le \inprod{\mmu\mmu^\top}{\mm} = \inprod{\mmu\mmu^\top}{\mv\mv^\top\mm\mv\mv^\top} + \inprod{\mmu\mmu^\top}{\mw\mw^\top\mm\mw\mw^\top} \notag \\
&\le \inprod{\mv^\top\mmu\mmu^\top\mv}{\mv^\top\mm\mv} + \Delta\normop{\mw^\top\mm\mw} \notag \\
&\le \norm{\mm}_k - \Delta\lam_k(\mm) + \Delta(1 - \Gamma)\lam_k(\mm) = \norm{\mm}_k - \Delta\Gamma\lam_k(\mm) \label{eq:epca_bound}.
}
The second line used \Cref{fact:matrix-inner-prodct} by noting that $\inprod{\mmu\mmu^\top}{\mw\mw^\top\mm\mw\mw^\top} = \Tr(\mw^\top\mm\mw \mw^\top  \mmu \mmu^\top \mw)$ and $\Tr(\mw^\top  \mmu \mmu^\top \mw) \leq \Delta$,
and the third used the von Neumann trace inequality, which shows that if $\{\lam_i\}_{i \in [\ell]}$ are the nonincreasing eigenvalues of $\mv^\top\mm\mv$ and $\{\sigma_i\}_{i \in [\ell]}$ are the nonincreasing eigenvalues of $\mv^\top\mmu\mmu^\top\mv$,
\[\inprod{\mv^\top\mmu\mmu^\top\mv}{\mv^\top\mm\mv} \le \sum_{i \in [\ell]}\sigma_i \lam_i.\]
Since the $\sigma_i$ sum to $k - \Delta$ and are each at most $1$, it is clear that the right-hand side above is bounded by $\norm{\mm}_k - \Delta\lam_k(\mm)$ as claimed.
The conclusion follows via rearranging \eqref{eq:epca_bound}.
\end{proof}

\restatectoe*
\begin{proof}
Define $\mv \defeq \mv^{\ge (1 - \Gam)\lam_1(\mm)}$ and $\mw \defeq \mv^{< (1 - \Gam)\lam_1(\mm)}$. The conclusion follows from
\begin{align*}
\inprod{uu^\top}{\mm} &= \inprod{uu^\top}{\mv\mv^\top\mm\mv\mv^\top} + \inprod{uu^\top}{\mw\mw^\top\mm\mw\mw^\top} \\
&\ge \inprod{\mv^\top uu^\top \mv}{\mv^\top\mm\mv} \ge \inprod{\mv^\top uu^\top \mv}{(1 - \Gam)\lam_1(\mm)\id_{\dim(\linspan(\mv))}} \\
&\ge (1 - \Delta)(1 - \Gamma)\lam_1(\mm) \ge (1 - \Delta - \Gamma)\lam_1(\mm),
\end{align*}
where $\mv\mv^\top$, $\mw\mw^\top$, and $\mm$ all commute since $\mv$ and $\mw$ are eigenvectors of $\mm \in \PSD^{d \times d}$.
\end{proof}

\begin{remark}\label{rem:ctoe_weird_k}
One interesting asymmetry between Lemma~\ref{lem:ctoe} and its counterpart, Lemma~\ref{lem:etoc}, is that there does not appear to be a natural $k$-PCA conversion analog of Lemma~\ref{lem:ctoe}. For example, consider the case where $\mmu \in \R^{d \times k}$ is a $(\Delta,\Gamma)$-$k$-cPCA of $\mm \in \PSD^{d \times d}$, but $\mm$ has $\ge k$ eigenvalues in the range $(\lam_k(\mm), (1 - \frac{\Gamma}{2})\lam_{k}(\mm))$, and $\linspan(\mmu)$ is entirely contained in the corresponding subspace. Then, $\inprod{\mmu\mmu^\top}{\mm} < k\lam_k(\mm)$, which can be arbitrarily smaller than $\norm{\mm}_k$ if $\kappa_k(\mm)$ is large.
\end{remark}

It is important to note that na\"ively applying such conversions with our lossless ePCA result in Theorem~\ref{thm:epca_reduce} do not provide us with optimal cPCA overheads, as in Theorem~\ref{thm:final_composition}, even when $k = 2$ and $\kappa_k(\mm) = 1$. Our Theorem~\ref{thm:final_composition} does not suffer from such lossiness, up to an overhead only depending on $k$, except in a parameter regime ruled out by Proposition~\ref{prop:delta_gg_gamsquare_bad}.

Before we begin describing our techniques, we define another desirable property of the output of the cPCA algorithms, which is complementary to the guarantee of Definition~\ref{def:cpca}.
\begin{definition}[Head guarantee for $k$-cPCA]\label{def:head_guarantee}
For $k \in [d]$, $\mm \in \PSD^{d \times d}$, let $\mmu \in \R^{d \times k}$ be orthonormal. We say $\mmu$ satisfies a \emph{$(h, \omega, \Delta)$-head guarantee} with respect to $\mm$ for $\omega, \Delta \in \mathbb{R}_{\geq 0}$, $0 \le h \le d$, if 
\begin{equation}\label{eq:head_conditions}
\lam_h(\mm) > \lam_{h + 1}(\mm),\;
\normf{\Par{\mv^{\ge\lam_h(\mm)}(\mm)}^\top \mmu_{\perp}}^2 \le \Delta, \text{ and } \frac{\lambda_{h+1}\bb{\mm}}{\lambda_{k}\bb{\mm}} \le 1 + \omega.
\end{equation}
If $h = 0$, we only require the third condition above.
\end{definition}
Definition~\ref{def:head_guarantee} asks that $\mmu_\perp$ is highly-uncorrelated with the top-$h$ eigenspace of $\mm$, so that $\mmu$ has picked up most of the spectral mass, for an $h$ such that $\lam_{h + 1}(\mm)$ is not very separated from $\lam_k(\mm)$.
Note that Definition~\ref{def:cpca} does not explicitly guarantee a head guarantee in the absence of a gap.
\section{$k$-to-$1$-ePCA reduction}\label{sec:epca}

In this section, we provide a guarantee on Algorithm~\ref{alg:bbpca}'s performance when $\oopca$ in Line~\ref{line:oopca} is an $\eps$-$1$-ePCA oracle. We begin by stating one helper lemma used in the analysis.

\begin{lemma}\label{lem:best_proj}
Let $\mm \in \PSD^{d \times d}$ and $r \in [d]$. If $\mproj \in \R^{d \times d}$ is a rank-$(d - r)$ orthogonal projection matrix, then $\normop{\mproj \mm \mproj} \ge \lam_{r + 1}(\mm)$.
\end{lemma}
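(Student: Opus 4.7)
The plan is to reduce the claim to a direct application of Cauchy's interlacing theorem (Fact~\ref{fact:cauchy_interlacing}). First, since $\mproj$ is an orthogonal projection matrix of rank $d-r$, I would write $\mproj = \mv\mv^\top$ for some orthonormal $\mv \in \R^{d \times (d-r)}$ whose columns span $\linspan(\mproj)$. Then $\mproj\mm\mproj = \mv(\mv^\top\mm\mv)\mv^\top$, which is PSD because $\mm \in \PSD^{d \times d}$.

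Next I would observe that the nonzero eigenvalues of $\mproj\mm\mproj$ coincide exactly with those of $\mv^\top\mm\mv$. Concretely, if $\mv^\top\mm\mv = \mw\mlam\mw^\top$ is an eigendecomposition (with $\mw \in \R^{(d-r)\times(d-r)}$ orthogonal), then $\mproj\mm\mproj = (\mv\mw)\mlam(\mv\mw)^\top$ is an eigendecomposition of a rank-$(d-r)$ PSD matrix, since $\mv\mw$ has orthonormal columns. Hence
\[
\normop{\mproj\mm\mproj} = \lam_1(\mproj\mm\mproj) = \lam_1(\mv^\top\mm\mv).
\]

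Finally, I would apply Fact~\ref{fact:cauchy_interlacing} to the orthonormal matrix $\mv \in \R^{d \times (d-r)}$ with $i=1$. Substituting into the interlacing bound $\lam_{d - (d-r) + i}(\mm) \le \lam_i(\mv^\top\mm\mv)$ yields $\lam_{r+1}(\mm) \le \lam_1(\mv^\top\mm\mv)$. Chaining this with the previous display gives the desired inequality $\normop{\mproj\mm\mproj} \ge \lam_{r+1}(\mm)$. There is no real obstacle here; the only mild subtlety is confirming that we may pass from the spectrum of the $d \times d$ matrix $\mproj\mm\mproj$ (padded by zeros on $\linspan(\mproj)_\perp$) to the spectrum of the compressed $(d-r) \times (d-r)$ matrix $\mv^\top\mm\mv$ without loss, which is immediate from the above change of basis.
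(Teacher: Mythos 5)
Your argument is correct and is exactly the route the paper takes; the paper's proof is the single line ``This is a consequence of Cauchy's interlacing theorem (Fact~\ref{fact:cauchy_interlacing}),'' and your write-up simply supplies the details (passing from $\mproj\mm\mproj$ to the compressed matrix $\mv^\top\mm\mv$ and applying the interlacing bound with $i=1$).
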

\begin{proof}
This is a consequence of Cauchy's interlacing theorem (Fact~\ref{fact:cauchy_interlacing}).
\end{proof}

\restateepcathm*
\begin{proof}
We proceed by induction on $i \in [k]$; for disambiguation let $\mmu_i \in \R^{d \times i}$ denote the horizontal concatenation of the first $i$ calls to $\oopca$, so that $\mproj_i = \id_d - \mmu_i\mmu_i^\top$. Observe that
\begin{align*}
\Tr\Par{\mmu_{i + 1}^\top \mm \mmu_{i + 1}} &= \Tr\Par{\mmu_i^\top \mm \mmu_i} + u_{i + 1}^\top \mm u_{i + 1} \\
&\ge (1 - \eps) \norm{\mm}_i + (1 - \eps) \normop{\mproj_i \mm \mproj_i} \\
&\ge (1 - \eps) \norm{\mm}_i + (1 - \eps) \sigma_{i + 1}(\mm) = (1 - \eps) \norm{\mm}_{i + 1}.
\end{align*}
The first inequality used the inductive hypothesis (the base case is $i = 0$) and the guarantee on $u_{i + 1}$ from Definition~\ref{def:epca_oracle}, and the second used Lemma~\ref{lem:best_proj}. The conclusion follows by taking $i = k$.
\end{proof}

We mention one elegant generalization of the proof strategy of Theorem~\ref{thm:epca_reduce}, which shows that arbitrary compositions of two $\eps$-ePCAs remain an $\eps$-ePCA, regardless of the block size.

\begin{corollary}\label{cor:epca_compose}
Let $\eps \in (0, 1)$, let $\mmu_1 \in \R^{d \times k_1}$ be an $\eps$-$k_1$-ePCA of $\mm \in \PSD^{d \times d}$, and define $\tmm \defeq (\id_d - \mmu_1\mmu_1^\top) \mm (\id_d - \mmu_1\mmu_1^\top)$. Let $\mmu_2 \in \R^{d \times k_2}$ be an $\eps$-$k_2$-ePCA of $\tmm$ with $\mmu_2 \perp \mmu_1$, and let $k \defeq k_1 + k_2$. Then, $\mmu \defeq \begin{pmatrix} \mmu_1 & \mmu_2 \end{pmatrix} \in \R^{d \times k}$ is an $\eps$-$k$-ePCA of $\mm$.
\end{corollary}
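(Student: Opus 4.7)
The plan is to mimic the inductive step used in the proof of Theorem~\ref{thm:epca_reduce}, but phrased at the block level. Since $\mmu_1 \perp \mmu_2$, the columns of $\mmu$ are orthonormal, and the quadratic form decomposes as
\[\inprod{\mmu\mmu^\top}{\mm} = \inprod{\mmu_1\mmu_1^\top}{\mm} + \inprod{\mmu_2\mmu_2^\top}{\mm}.\]
The first term is bounded from below by $(1-\eps)\norm{\mm}_{k_1}$ directly from the hypothesis that $\mmu_1$ is an $\eps$-$k_1$-ePCA of $\mm$. For the second term, I would use $\mmu_2 \perp \mmu_1$ to write $\mmu_2 = (\id_d - \mmu_1\mmu_1^\top)\mmu_2$, so that $\inprod{\mmu_2\mmu_2^\top}{\mm} = \inprod{\mmu_2\mmu_2^\top}{\tmm}$, at which point the $\eps$-$k_2$-ePCA guarantee on $\tmm$ gives a lower bound of $(1-\eps)\norm{\tmm}_{k_2}$.

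The remaining step, which is the only nontrivial part, is to argue
\[\norm{\mm}_{k_1} + \norm{\tmm}_{k_2} \ge \norm{\mm}_k.\]
This is exactly the block-level analog of the role that Lemma~\ref{lem:best_proj} played in the proof of Theorem~\ref{thm:epca_reduce}. I would prove it by letting $\mmu_{1,\perp} \in \R^{d \times (d-k_1)}$ be an orthonormal completion of $\mmu_1$, so that the nonzero eigenvalues of $\tmm$ coincide with the eigenvalues of $\mmu_{1,\perp}^\top \mm \mmu_{1,\perp}$. Applying Cauchy's interlacing theorem (Fact~\ref{fact:cauchy_interlacing}) to $\mmu_{1,\perp}^\top \mm \mmu_{1,\perp}$ gives $\lam_j(\tmm) \ge \lam_{k_1 + j}(\mm)$ for each $j \in [k_2]$, so summing over $j \in [k_2]$ produces the desired inequality $\norm{\tmm}_{k_2} \ge \norm{\mm}_k - \norm{\mm}_{k_1}$.

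Combining these three bounds yields
\[\inprod{\mmu\mmu^\top}{\mm} \ge (1-\eps)\norm{\mm}_{k_1} + (1-\eps)\norm{\tmm}_{k_2} \ge (1-\eps)\norm{\mm}_k,\]
which is exactly the $\eps$-$k$-ePCA guarantee. The main potential pitfall is the interlacing step: one has to be careful that Cauchy's inequality is being applied to the right matrix (namely $\mmu_{1,\perp}^\top \mm \mmu_{1,\perp}$, whose spectrum is identified with the nonzero spectrum of $\tmm$), but once this identification is made the bound is immediate. Everything else is just bookkeeping built on the orthogonality $\mmu_1 \perp \mmu_2$ and the two hypotheses. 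Theorem~\ref{thm:epca_reduce} is then the special case of iterating this corollary with $k_2 = 1$ at each step.
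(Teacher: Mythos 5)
Your proof is correct and takes essentially the same approach as the paper: both apply the ePCA guarantee to $\mmu_2$ against $\tmm$ and then use Cauchy's interlacing theorem (the block-level form of Lemma~\ref{lem:best_proj}) to show $\norm{\tmm}_{k_2} \ge \sum_{i \in [k_2]} \lam_{k_1+i}(\mm)$, which is exactly the display in the paper's proof.
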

\begin{proof}
The proof is identical to Theorem~\ref{thm:epca_reduce}, where we use Fact~\ref{fact:cauchy_interlacing} to conclude
\[\Tr\Par{\mmu_{2}^\top \mm \mmu_2} \ge (1 - \eps)\norm{\Par{\id_d - \mmu_1\mmu_1^\top} \mm \Par{\id_d - \mmu_1\mmu_1^\top}}_{k_2} \ge (1 - \eps)\sum_{i \in [k_2]} \lam_{k_1 + i}(\mm).\]
\end{proof}
\section{$k$-to-$1$-cPCA reduction}\label{sec:cpca}

In this section, we give our second black-box $k$-to-$1$-PCA reduction, which considers the case where $\oopca$ in Line~\ref{line:oopca} of Algorithm~\ref{alg:bbpca} is a $(\delta,\gamma)$-1-cPCA oracle. In Section~\ref{ssec:cpca_prelims} we first give two helper tools which are used throughout to bound the spectra of matrices under projections formed via perturbations of its eigenvectors. In Section~\ref{ssec:cpca_lower}, we establish an impossibility result on $k$-to-$1$-cPCA reductions in a range of the parameters $(\delta, \gamma)$. In the complement of this parameter regime, we analyze the performance of Algorithm~\ref{alg:bbpca} and provide our main result, Theorem~\ref{thm:final_composition}, in Section~\ref{ssec:cpca_upper}.

\subsection{Gap-free Wedin decompositions and basic cPCA composition}\label{ssec:cpca_prelims}

In this section, we develop two general facts used throughout our analysis. The first is a matrix decomposition inspired by the gap-free Wedin theorem (Lemma B.3 of \cite{allen2016lazysvd}). At a high level, Lemma~\ref{lemma:T1_T2_decomposition} applies noncommuting projection matrices compatible with only one of two matrices $\mm, \tmm$ on different sides of their difference, and rearranges terms. Its conclusion allows us to develop tools to bound the correlation between the small eigenspace of $\mm$ and the large eigenspace of $\tmm$.

\begin{lemma} \label{lemma:T1_T2_decomposition}
Let $\mm \in \PSD^{d \times d}$, let $\mmu \in \R^{d \times r}$ be orthonormal for $r \in [d]$, and let $\tmm \defeq (\id_d - \mmu\mmu^\top)\mm(\id_d - \mmu\mmu^\top)$. Suppose $\mm, \tmm$ have eigendecompositions 
\bas{
    \mm = \ml \mlam \ml^\top + \ms \msig \ms^\top, \; \tmm = \tml \tmlam \tml^\top + \tms \tmsig \tms^\top,\text{ for } \mmu \perp \tml,\; \mmu \perp \tms,
}
i.e.\ $\ml, \ms, \tml, \tms$ are all orthonormal, $\ml, \ms$ have columns forming the eigenvectors of $\mm$, and $\tml, \tms, \mmu$ have columns forming the eigenvectors of $\tmm$. Then, assuming $\tmlam \in \PD^{d \times d}$,
\bas{
    \ms^{\top}\tml &= \bb{\id_d - \ms^{\top}\mmu\mmu^\top\ms}\msig \ms^{\top} \tml\tmlam^{-1} - \ms^{\top}\mmu\mmu^\top\ml \mlam \ml^{\top}\tml\tmlam^{-1}.
}
\end{lemma}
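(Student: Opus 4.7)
The strategy is direct: I will start from the eigenvalue equation for $\tmm$ applied to $\tml$, use that $\mmu \perp \tml$ to eliminate one copy of $(\id_d - \mmu\mmu^\top)$, and then expand $\mm$ via its eigendecomposition and project onto $\ms$.

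First, since $\mmu \perp \tml$, the relation $\mmu^\top \tml = \mzero$ implies $(\id_d - \mmu\mmu^\top)\tml = \tml$. Combined with the eigenvalue equation $\tmm\tml = \tml\tmlam$ and the definition $\tmm = (\id_d - \mmu\mmu^\top)\mm(\id_d - \mmu\mmu^\top)$, this gives
\[
(\id_d - \mmu\mmu^\top)\mm\tml = \tml\tmlam.
\]
Next, I expand $\mm = \ml\mlam\ml^\top + \ms\msig\ms^\top$ inside this equation and multiply on the left by $\ms^\top$. Using the orthonormality relations $\ms^\top\ms = \id$ and $\ms^\top\ml = \mzero$ (which hold because $\ml, \ms$ together form an eigenbasis of the symmetric matrix $\mm$ corresponding to disjoint spectral parts), the two terms become
\[
\ms^\top(\id_d - \mmu\mmu^\top)\ml\mlam\ml^\top\tml = -\ms^\top\mmu\mmu^\top\ml\mlam\ml^\top\tml,\quad \ms^\top(\id_d - \mmu\mmu^\top)\ms\msig\ms^\top\tml = (\id - \ms^\top\mmu\mmu^\top\ms)\msig\ms^\top\tml.
\]
Substituting back, the equation reads
\[
-\ms^\top\mmu\mmu^\top\ml\mlam\ml^\top\tml + (\id - \ms^\top\mmu\mmu^\top\ms)\msig\ms^\top\tml = \ms^\top\tml\tmlam.
\]
Finally, because $\tmlam \in \PD^{d \times d}$ is invertible by assumption, I right-multiply both sides by $\tmlam^{-1}$ and rearrange to obtain the claimed identity.

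There is no real obstacle here; the lemma is essentially an algebraic rearrangement of the eigenvalue equation. The only thing to be careful about is bookkeeping of which $\id$ appears in $\id - \ms^\top\mmu\mmu^\top\ms$ (it lives on the subspace spanned by $\ms$'s columns, matching the dimensions of $\ms^\top\mmu\mmu^\top\ms$), and ensuring that the cancellation $(\id_d - \mmu\mmu^\top)\tml = \tml$ really does remove one $(\id_d - \mmu\mmu^\top)$ factor from $\tmm$ so that only the left copy remains.
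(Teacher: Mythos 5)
Your proof is correct and takes essentially the same route as the paper's: the paper introduces $\md \defeq \tmm - \mm$ and computes $\ms^\top\md\tml$ in two ways before equating, but this is algebraically identical to your direct start from $(\id_d - \mmu\mmu^\top)\mm\tml = \tml\tmlam$ and left-multiplication by $\ms^\top$. Your bookkeeping of the cancellations ($\ms^\top\ml = \mzero$, $\mmu^\top\tml = \mzero$, $\tms^\top\tml = \mzero$) matches the paper exactly.
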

\begin{proof}
Let $\md \defeq \tmm - \mm = (\id_d - \mmu\mmu^\top)\mm(\id_d -\mmu\mmu^\top) - \mm$.
Then,
\ba{
    \ms^{\top}\mathbf{D}\tml 
    &= \ms^{\top}\bb{\Par{\id_d - \mmu\mmu^\top}\mm\Par{\id_d - \mmu\mmu^\top}\tml - \mm\tml} \notag  \\
    &= \ms^{\top}\bb{\Par{\id_d - \mmu\mmu^\top}\mm\tml - \mm\tml} = -\ms^{\top}\mmu\mmu^\top\mm\tml \notag  \\
    &= -\ms^{\top}\mmu\mmu^\top \bb{ \ms\msig \ms^{\top} + \ml \mlam \ml^{\top}} \tml \notag  \\
    &= -\ms^{\top}\mmu\mmu^\top\ms\msig \ms^{\top} \tml - \ms^{\top}\mmu\mmu^\top\ml \mlam \ml^{\top}\tml,\label{eq:rep1}
}
where the second equality used $\tml \perp \mmu$.
We also have,
\ba{
    \ms^{\top}\mathbf{D}\tml &= \ms^{\top}\bb{\tmm\tml - \mm\tml} = \ms^{\top}\bb{\tml \tmlam - \mm\tml} = \ms^{\top}\tml \tmlam - \ms^{\top}\mm\tml = \ms^{\top}\tml \tmlam - \msig\ms^{\top}\tml \label{eq:rep2} 
}
where the second equality used $\tmm = \tml\tmlam\tml^\top + \tms\tmsig\tms^\top$ and $\tml \perp \tms$, and the fourth equality used $\mm = \ms\msig \ms^{\top} + \ml \mlam \ml^{\top}$.
Combining \eqref{eq:rep1} and \eqref{eq:rep2} and right-multiplying by $\tmlam^{-1}$, we have the claim.
\end{proof}

We next prove a useful consequence of Lemma~\ref{lemma:T1_T2_decomposition}, which is repeatedly used to analyze the compositions of approximate cPCA algorithms in Section~\ref{ssec:cpca_upper}. The proof first applies two cPCA guarantees in turn, and then uses Lemma~\ref{lemma:T1_T2_decomposition} to bound a cross term arising in the analysis.

\begin{lemma}
    \label{lemma:u1_u2_composition}
    Let $\delta_1, \delta_2, \gamma_1, \gamma_2 \in [0, \frac 1 {10}]$. Let $\mmu_{1} \in \mathbb{R}^{d \times k_{1}}$ be a $(\delta_1, \gamma_1)$-$k_1$-cPCA of $\mm \in \PSD^{d \times d}$, and define $\tmm \defeq (\id_d - \mmu_1\mmu_1^\top)\mm(\id_d - \mmu_1\mmu_1^\top)$. Let $\mmu_2 \in \R^{d \times k_2}$ be a $(\delta_2,\gamma_2)$-$k_2$-cPCA of $\tmm$ with $\mmu_2 \perp \mmu_1$, and let $\gamma \defeq \max(\gamma_1, 2\gamma_2)$, $k \defeq k_1 + k_2$. Let the eigendecompositions of $\mm$, $\tmm$ be
    \bas{
         \mm &= \ml \mlam \ml^\top + \ms \msig \ms^\top, \text{ where } \ml \defeq \mv^{\ge (1 - \gamma)\lam_{k}(\mm) }(\mm), \\
        \tmm &= \tml \tmlam \tml^\top + \tms \tmsig \tms^\top, \text{ where } \tml \defeq \mv^{\ge (1 - \gamma_2)\lam_{k_2}(\tmm)}(\tmm), \text{ and } \mmu \perp \tml,\; \mmu \perp \tms,
    }
    i.e.\ orthonormal $\ml, \ms$ have columns forming eigenvectors of $\mm$, and orthonormal $\tml, \tms, \mmu_1$ have columns forming eigenvectors of $\tmm$. Then, $\mmu \defeq \begin{pmatrix}\mmu_1 & \mmu_2 \end{pmatrix} \in \mathbb{R}^{d \times k}$ is a $(\Delta, \gamma)$-$k$-cPCA of $\mm$, with
    \bas{
       \Delta \defeq \delta_1 + 2\delta_2 + \frac{4\delta_1}{\gamma_2^2} \cdot \frac{\normop{\mmu_{1}^\top\ml \mlam \ml^{\top}\tml}^{2}}{\lambda_{k}\bb{\mm}^{2}}.
    }
\end{lemma}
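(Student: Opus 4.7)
The plan is to bound the Frobenius-norm correlation $\normsf{\ms^\top \mmu}^2 = \normsf{\ms^\top \mmu_1}^2 + \normsf{\ms^\top \mmu_2}^2$ where $\ms \defeq \ml_\perp$ is the ``small'' eigenspace of $\mm$ at threshold $(1-\gamma)\lam_k(\mm)$. The first summand is handled by a monotonicity argument: since $\gamma \ge \gamma_1$ and $\lam_{k_1}(\mm) \ge \lam_k(\mm)$, we have $(1-\gamma)\lam_k(\mm) \le (1-\gamma_1)\lam_{k_1}(\mm)$, so $\linspan(\ms) \subseteq \linspan(\mv^{<(1-\gamma_1)\lam_{k_1}(\mm)}(\mm))$, and the $(\delta_1,\gamma_1)$-cPCA guarantee of $\mmu_1$ directly yields $\normsf{\ms^\top\mmu_1}^2 \le \delta_1$.

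For the second summand, I would use the assumption $\mmu_2 \perp \mmu_1$ together with the eigendecomposition of $\tmm$ to write $\mmu_2 = \tml\tml^\top \mmu_2 + \tms\tms^\top\mmu_2$. This gives $\normsf{\ms^\top\mmu_2} \le \normsf{\ms^\top \tml\tml^\top\mmu_2} + \normsf{\ms^\top\tms\tms^\top\mmu_2}$, which I would square using $(a+b)^2 \le 2a^2+2b^2$. The $\tms$-term is bounded by $\normsf{\tms^\top\mmu_2}^2 \le \delta_2$ from the $(\delta_2,\gamma_2)$-cPCA guarantee of $\mmu_2$ on $\tmm$ (here the zero-eigenvector block spanned by $\mmu_1$ contributes nothing since $\mmu_2 \perp \mmu_1$). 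The key subtlety for the $\tml$-term is to apply $\normsf{AB} \le \normsf{A}\normsop{B}$ with $\normsop{\tml^\top\mmu_2} \le 1$, giving $\normsf{\ms^\top\tml\tml^\top\mmu_2} \le \normsf{\ms^\top\tml}$; this is the step that prevents a spurious $\sqrt{k_2}$ factor, since the na\"ive bound $\normsf{\ms^\top\tml}_{\mathrm{op}} \normsf{\tml^\top\mmu_2}$ would only give $\sqrt{k_2}\normsop{\ms^\top\tml}$.

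The main obstacle is then bounding $\normsf{\ms^\top\tml}$, which I would accomplish via the decomposition of \Cref{lemma:T1_T2_decomposition}. Applying that lemma and taking Frobenius norms produces a self-referential term $\normsf{(\id - \ms^\top\mmu_1\mmu_1^\top\ms)\msig\ms^\top\tml\tmlam^{-1}}$. I would bound this by $\normsop{\msig}\normsop{\tmlam^{-1}}\normsf{\ms^\top\tml}$, using $\normsop{\msig} \le (1-\gamma)\lam_k(\mm)$ and the Cauchy interlacing estimate $\lam_{k_2}(\tmm) \ge \lam_{k}(\mm)$ (Fact~\ref{fact:cauchy_interlacing}) to give $\normsop{\tmlam^{-1}} \le ((1-\gamma_2)\lam_k(\mm))^{-1}$, so the coefficient is at most $\tfrac{1-\gamma}{1-\gamma_2}$. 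The hypothesis $\gamma \ge 2\gamma_2$ is exactly what makes $1 - \tfrac{1-\gamma}{1-\gamma_2} = \tfrac{\gamma-\gamma_2}{1-\gamma_2} \ge \gamma_2$, which lets me move the self-referential term to the left-hand side and solve for $\normsf{\ms^\top\tml}$.

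The remaining (non-self-referential) term $\normsf{\ms^\top\mmu_1\mmu_1^\top\ml\mlam\ml^\top\tml\tmlam^{-1}}$ I would bound by $\normsf{\ms^\top\mmu_1}\cdot\normsop{\mmu_1^\top\ml\mlam\ml^\top\tml}\cdot\normsop{\tmlam^{-1}} \le \sqrt{\delta_1}\cdot\normsop{\mmu_1^\top\ml\mlam\ml^\top\tml}/((1-\gamma_2)\lam_k(\mm))$. Combining, dividing by $\gamma_2$, and absorbing $(1-\gamma_2)^{-2}\le 2$ (using $\gamma_2 \le \tfrac{1}{10}$) yields $\normsf{\ms^\top\tml}^2 \le \tfrac{2\delta_1}{\gamma_2^2}\cdot\tfrac{\normsop{\mmu_1^\top\ml\mlam\ml^\top\tml}^2}{\lam_k(\mm)^2}$. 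Feeding this into the earlier split gives $\normsf{\ms^\top\mmu}^2 \le \delta_1 + 2\delta_2 + \tfrac{4\delta_1}{\gamma_2^2}\cdot\tfrac{\normsop{\mmu_1^\top\ml\mlam\ml^\top\tml}^2}{\lam_k(\mm)^2} = \Delta$, as claimed.
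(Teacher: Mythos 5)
Your proposal is correct and follows essentially the same route as the paper's proof: split $\normsf{\ms^\top\mmu}^2$ into the $\mmu_1$- and $\mmu_2$-blocks, bound the $\mmu_1$-block via monotonicity, decompose $\mmu_2$ through the $\tml/\tms$ eigenspaces of $\tmm$ (with the same care to pull out $\normsop{\tml^\top\mmu_2}\le 1$ rather than a Frobenius factor), invoke Lemma~\ref{lemma:T1_T2_decomposition}, solve the resulting self-referential inequality on $\normsf{\ms^\top\tml}$ using $\gamma \ge 2\gamma_2$, and close with $\normsf{\ma\mb}\le\normop{\ma}\normsf{\mb}$ and Cauchy interlacing. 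The only cosmetic differences are that the paper phrases the block decomposition via traces and Fact~\ref{fact:trace_cs} instead of a Frobenius triangle inequality, and bounds the contraction factor as $\le 1-\gamma_2$ directly rather than showing $1-\tfrac{1-\gamma}{1-\gamma_2}\ge\gamma_2$; both are equivalent.
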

\begin{proof}
We first observe
\ba{
    \normf{\ms^{\top}\mmu}^{2} = \Tr\bb{\mmu^\top\ms\ms^{\top} \mmu} = \Tr\bb{\mmu_1^\top\ms\ms^{\top} \mmu_{1}} + \Tr\bb{\mmu_2^\top\ms\ms^{\top} \mmu_{2}}.\label{eq:U1_U2_total_error}
}
Since $\mmu_{1}$ is a $\bb{\delta_{1},\gamma_{1}}$-$k_{1}$-cPCA of $\mm$, it is also a $(\delta_1, \gamma)$-$k$-cPCA (since $\gamma \geq \gamma_1$), so we have $\Tr\bb{\mmu_1^\top\ms\ms^{\top}\mmu_{1}} \leq \delta_{1}$. Next, we bound $\Tr\bb{\ms\ms^{\top} \mmu_{2}\mmu_{2}^{\top}}$. 
Then, since $\mmu_2 \perp \mmu_1$,
    \ba{
        \Tr\bb{\mmu_{2}^{\top}\ms\ms^{\top}\mmu_{2}} 
        &= \Tr\bb{\mmu_{2}^{\top}\bb{\tml\tml^{\top} + \tms\tms^{\top}}\ms\ms^{\top}\bb{\tml\tml^{\top} + \tms\tms^{\top}}\mmu_{2}} \notag \\
        &= \Tr\bb{\bb{\mmu_{2}^{\top}\tml\tml^{\top}\ms + \mmu_{2}^{\top}\tms\tms^{\top}\ms}\bb{\ms^{\top}\tml\tml^{\top}\mmu_{2} + \ms^{\top}\tms\tms^{\top}\mmu_{2}}} \notag \\
        &\leq 2\Tr\bb{\mmu_{2}^{\top}\tml\tml^{\top}\ms\ms^{\top}\tml\tml^{\top}\mmu_{2}} + 2\Tr\bb{\mmu_{2}^{\top}\tms\tms^{\top}\ms\ms^{\top}\tms\tms^{\top}\mmu_{2}}, \text{ using Fact}~\ref{fact:trace_cs} \notag \\
        &= 2\Tr\bb{ \tml^{\top}\mmu_{2}\mmu_{2}^{\top}\tml \tml^{\top}\ms\ms^{\top}\tml} + 2\Tr\bb{\tms^{\top}\ms\ms^{\top}\tms\tms^{\top}\mmu_{2}\mmu_{2}^{\top}\tms}  \notag \\
        &\leq 2\Tr\bb{ \tml^{\top}\ms\ms^{\top}\tml} + 2\Tr\bb{\tms^{\top}\mmu_{2}\mmu_{2}^{\top}\tms} , \text{using \Cref{fact:matrix-inner-prodct}, } \notag
        \\
        & \qquad \qquad \qquad \qquad\qquad \text{ $\normop{\tml^{\top}\mmu_{2}\mmu_{2}^{\top}\tml} \leq 1$, and  $\normop{\tms^\top\ms\ms^\top\tms} \leq 1$} \notag \\ 
        &\leq 2\normf{\ms^{\top}\tml}^{2} + 2\delta_{2}\label{eq:U2_S_bound} 
    }
where the last line used the cPCA guarantee.
Now we bound $\normsf{\ms^{\top}\tml}$. By Lemma \ref{lemma:T1_T2_decomposition},
\ba{
    \ms^{\top}\tml &= \bb{\id_d - \ms^{\top}\mmu_{1}\mmu_{1}^\top\ms}\msig \ms^{\top} \tml\tmlam^{-1} - \ms^{\top}\mmu_{1}\mmu_{1}^\top\ml \mlam \ml^{\top}\tml\tmlam^{-1}.\label{eq:T1_T2_decomposition}
}
We next focus on the first term.
Since $\normop{\msig} < \bb{1-\gamma}\lambda_{k}\bb{\mm}$, and $\lam_1(\ms^\top\mmu_1\mmu_1^\top \ms) \le 1$, we have
\[
\tmlam^{-1}\tml^{\top}\ms\msig\bb{\id_d - \ms^{\top}\mmu_{1}\mmu_{1}^\top\ms}^{2}\msig \ms^{\top}\tml\tmlam^{-1} \preceq (1 - \gamma)^2\lambda_{k}\bb{\mm}^{2}\tmlam^{-1}\tml^{\top}\ms\ms^{\top}\tml\tmlam^{-1}.
\]
Further, since Fact~\ref{fact:cauchy_interlacing} implies $\lam_k(\mm) \le \lam_{k_2}(\tmm)$, \Cref{fact:matrix-inner-prodct} implies that
\[\Tr\bb{\tmlam^{-1}\tml^{\top}\ms\ms^{\top}\tml\tmlam^{-1}} =\Tr\bb{\tmlam^{-2}\tml^{\top}\ms\ms^{\top}\tml} \le \normop{\tmlam^{-1}}^2 \normf{\ms^\top\tml}^2 \le \frac{1}{(1 - \gamma_2)^2\lam_k(\mm)^2}\normf{\ms^\top\tml}^2.\]
Combining the above two displays  proves $\normsf{(\id_d - \ms^\top\mmu_1\mmu_1^\top\ms)\msig\ms^\top\tml\tmlam^{-1}} \le (1 - \gamma_2)\normsf{\ms^\top\tml}$, since $\frac{(1 - \gamma)^2}{(1 - \gamma_2)^2} \le (1 - \gamma_2)^2$ for $\gamma \ge 2\gamma_2$.
Plugging this into~\eqref{eq:T1_T2_decomposition}, along with a triangle inequality, yields
\bas{
    \normf{\ms^{\top}\tml} \leq \bb{1 - \gamma_{2}}\normf{\ms^{\top}\tml} + \normf{\ms^{\top}\mmu_{1}\mmu_{1}^\top\ml \mlam \ml^{\top}\tml\tmlam^{-1}},
}
which implies
\ba{
    \normf{\ms^{\top}\tml} \leq \frac{\normf{\ms^{\top}\mmu_{1}\mmu_{1}^\top\ml \mlam \ml^{\top}\tml\tmlam^{-1}}}{\gamma_{2}}.\label{eq:SL_bound}
}
Finally, the conclusion follows by combining \eqref{eq:U1_U2_total_error}, \eqref{eq:U2_S_bound}, \eqref{eq:SL_bound} and $\normf{\ma \mb} \leq \normop{\ma}\normf{\mb}$:
\ba{
    \normf{\ms^{\top}\mmu_{1}\mmu_{1}^\top\ml \mlam \ml^{\top}\tml\tmlam^{-1}}^{2} 
    &\leq \normf{\mmu_{1}^\top\ms}^2 \normop{\mmu_{1}^\top\ml \mlam \ml^{\top}\tml\tmlam^{-1}}^2 \notag \\
    &\leq \normf{\mmu_{1}^\top\ms}^2 \normop{\tmlam^{-1}}^2 \normop{\mmu_{1}^\top\ml \mlam \ml^{\top}\tml}^2 \notag \\
    &\leq \delta_1 \frac{1}{\bb{1-\gamma_{2}}^{2}\lambda_{k_{2}}\bb{\tmm}^{2}}  \normop{\mmu_{1}^\top\ml \mlam \ml^{\top}\tml}^2 \notag \\
    &\leq \frac{\delta_{1}(1+3\gamma_{2})}{\lambda_{k}\bb{\mm}^{2}}\normop{\mmu_{1}^\top\ml \mlam \ml^{\top}\tml}^{2} \le \frac{2\delta_{1}}{\lambda_{k}\bb{\mm}^{2}}\normop{\mmu_{1}^\top\ml \mlam \ml^{\top}\tml}^{2}.
}
\end{proof}
\subsection{Invalid regimes for black-box cPCA}\label{ssec:cpca_lower}

Before we prove our main result on cPCA (Theorem~\ref{thm:final_composition}) in the following Section~\ref{ssec:cpca_upper}, in this section, we point out that not all values of $\bb{\delta,\gamma}$ pairs are feasible inputs for Algorithm~\ref{alg:bbpca} to serve as a lossless black-box reduction even for constant $k$, unlike in the case of ePCA. Formally, we characterize two types of parameter regimes for black-box cPCA using the following definition.

\begin{definition}[Black-box cPCA regimes]\label{def:invalid}
Let $g: [0, 1] \times [1,\infty) \to [0, 1]$. We say $g$ induces an \emph{invalid black-box cPCA regime} if, for every $f: \N \to \R_{> 0}$, there is a value $\Delta \in [0, 1]$, a $k \in \N$, and $\mm \in \PSD^{d \times d}$ with $d \ge k$ such that, letting $\Gamma \defeq g(\Delta,\kappa_{k}\bb{\mm})$,\footnote{We let $\kappa_k(\mm)$ denote the $k$-condition number of $\mm$, i.e.\ the ratio of $\lam_1(\mm)$ and $\lam_k(\mm)$ (see Section~\ref{sec:prelims}).} and defining
\begin{equation}\label{eq:small_dg_def}\delta \defeq \frac{\Delta}{f(k)},\; \gamma \defeq \frac{\Gamma}{f(k)},\end{equation}
$\BBPCA$ does not always return a $(\Delta, \Gamma)$-$k$-cPCA if $\oopca$ is a $(\delta,\gamma)$-$1$-cPCA oracle. 

Conversely, we say $g$ induces a \emph{valid black-box cPCA regime} if there exists $f: \N \to \R_{> 0}$, such that for all $\mm \in \PSD^{d \times d}$, $\Delta \in [0, 1]$, $\Gamma := g(\Delta, \kappa_{k}\bb{\mm})$, and $(\delta,\gam)$ in \eqref{eq:small_dg_def}, $\BBPCA$ always returns a $(\Delta,\Gam)$-$k$-cPCA if $\oopca$ is a $(\delta,\gamma)$-1-cPCA oracle.
\end{definition}
For example, suppose we establish $g$ which induces an invalid black-box cPCA regime. This means that for $\BBPCA$ to serve generically as a $(\Delta, \Gam \defeq g(\Delta,\kappa))$-$k$-cPCA algorithm for all $\Delta \in [0, 1]$ and $\kappa > 1$, we must ask for $\oopca$ to be a $(\delta, \gam)$-$1$-cPCA oracle where either $\delta = o_{k}(\Delta)$ or $\gam = o_{k}(\Gamma)$, i.e.\ we necessarily suffer some loss in either the $\Delta$ or $\Gam$ parameter by more than a function of $k$. The following impossibility result establishes functions $g$ which induce invalid black-box cPCA regimes.

Our choice to parameterize the regimes $g$ in Definition~\ref{def:invalid} by $\kappa_k(\mm)$ is motivated by a result of \cite{liang2023optimality} (see Eq.\ (1.3) in that paper), which proves that some sample complexity overhead in $\kappa_k(\mm)$ is necessary in general. On the other hand, the requirement that $\Gamma \gtrsim \sqrt{\Delta}$ appears to be an artifact of the specific deflation strategy in Algorithm~\ref{alg:bbpca}. We now state our main impossibility result.

\begin{restatable}[Invalid black-box cPCA regimes]{proposition}{restateinvalid}\label{prop:delta_gg_gamsquare_bad}
Let $\alpha, \beta, \nu \in \R_{> 0}$, and $g: [0, 1] \times (1, \infty) \to [0, 1]$ be defined as $g\bb{\Delta, \kappa} \defeq \nu \Delta^{\alpha}\kappa^{\beta}$. If $\alpha > \frac{1}{2}$ or $\beta < 1$, $g$ induces an invalid black-box cPCA regime.
\end{restatable}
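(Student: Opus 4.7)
The plan is to prove \Cref{prop:delta_gg_gamsquare_bad} by exhibiting, for any fixed $f$, an explicit bad instance $(\Delta, k, \mm)$ and a compatible adversarial oracle whose $\BBPCA$ output violates the $(\Delta, \Gamma)$-$k$-cPCA guarantee, where $\Gamma = g(\Delta, \kappa_k(\mm))$. I will use $k = 2$, $d = 3$, and $\mm = \diag{\kappa, 1, 0}$ for a parameter $\kappa > 1$ to be chosen, so that $\kappa_k(\mm) = \kappa$ and $\mv^{<(1-\Gamma)\lam_k(\mm)}(\mm) = e_3$ for any $\Gamma \in [0, 1)$. The adversarial oracle returns $u_1 = \sqrt{1-\delta}\,e_1 + \sqrt{\delta}\,e_3$ in the first deflation step; since $\linspan(\mv^{<(1-\gamma)\kappa}(\mm)) = \linspan(\{e_2, e_3\})$ for $(1-\gamma)\kappa > 1$, the vector $u_1$ has squared correlation exactly $\delta$ with this bad subspace, verifying that $u_1$ is a valid $(\delta, \gamma)$-$1$-cPCA of $\mm$.

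A direct computation, using the identity $\mproj_\perp \mm w = (\mu\delta + \nu(1-\delta))\,w$ for $w = \sqrt{\delta}\,e - \sqrt{1-\delta}\,y$ the orthogonal complement of $\sqrt{1-\delta}\,e + \sqrt{\delta}\,y$ in $\linspan(\{e, y\})$ (with $e, y$ eigenvectors of $\mm$ having eigenvalues $\mu, \nu$), shows that $\tmm \defeq (\id_3 - u_1 u_1^\top)\mm(\id_3 - u_1 u_1^\top)$ has eigenvector $v \defeq \sqrt{\delta}\,e_1 - \sqrt{1-\delta}\,e_3$ with eigenvalue $\delta\kappa$, eigenvector $e_2$ with eigenvalue $1$, and eigenvector $u_1$ with eigenvalue $0$. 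Whenever $\delta\kappa \geq 1-\gamma$, the vector $v$ lies in the span of eigenvectors of $\tmm$ with eigenvalue $\geq (1-\gamma)\lam_1(\tmm)$, so the adversary may validly return $u_2 = v$; the resulting $\mmu = \begin{pmatrix} u_1 & u_2 \end{pmatrix}$ satisfies $\normsf{e_3^\top \mmu}^2 = \delta + (1-\delta) = 1 > \Delta$ for any $\Delta < 1$, yielding the desired failure. Hence it suffices to choose $\Delta \in (0,1)$ and $\kappa > 1$ simultaneously satisfying $\Delta\kappa \gtrsim f(2)$ (so that $\delta\kappa \geq 1-\gamma$ holds) and $\Gamma = \nu\Delta^\alpha \kappa^\beta < 1$.

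The remaining step is to verify these two constraints are jointly satisfiable in each invalid regime. In the case $\alpha > 1/2$, I parameterize $\Delta = t$ and $\kappa = t^{-\theta}$ for $\theta$ to be chosen, letting $t \to 0^+$: the failure condition $\Delta\kappa = t^{1-\theta} \gg f(2)$ requires $\theta > 1$, while $\Gamma = \nu t^{\alpha - \theta\beta} < 1$ requires $\theta < \alpha/\beta$, so any $\theta \in (1, \alpha/\beta)$ works when $\alpha > \beta$. The case $\beta < 1$ is handled symmetrically, scaling $\kappa \to \infty$ with $\Delta = \kappa^{-\sigma}$ for an appropriate $\sigma$. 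For the boundary cases where $\alpha \leq \beta$ (so that the naive scaling leaves a gap), I plan to modify the construction by replacing $\mm$ with $\diag{\kappa, 1, 1-\epsilon}$ for $\epsilon \approx 2\Gamma$: applying the same deflation identity, the failure threshold weakens from $\delta\kappa \geq 1-\gamma$ to $\delta(\kappa - 1 + \epsilon) \geq \epsilon - \gamma$, which provides the extra flexibility needed to cover the remaining parameter pairs.

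The main obstacle I anticipate is precisely this boundary analysis: verifying that, for every adversarial choice of $f$, the perturbed construction (or alternatively a multi-step cascade in which each $u_i$ compounds $e_{k+1}$-correlation iteratively) succeeds across all of $\{\alpha > 1/2\} \cup \{\beta < 1\}$. The case $\alpha > \beta$ follows cleanly from the naive scaling, but closing the gap in the $\alpha \leq \beta$ subregions will likely require careful tuning of $\epsilon$ (or the cascade depth) against $f(k)$; this is where the bulk of the technical work will lie.
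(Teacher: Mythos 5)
Your plan reproduces (essentially) the paper's Lemma~\ref{lem:delta_gg_gam_bad}, but it does not reach all of $\{\alpha > \tfrac12\} \cup \{\beta < 1\}$, and the modification you sketch for the remaining cases does not work. Here is the issue concretely. With $u_1 = \sqrt{1-\delta}\,e_1 + \sqrt{\delta}\,e_3$ and either $\mm = \diag{\kappa,1,0}$ or $\mm = \diag{\kappa,1,1-\epsilon}$, the adversary is forced to wait for the eigenvalue $\lambda_v = \nu(1-\delta) + \kappa\delta$ (with $\nu \in \{0, 1-\epsilon\}$) to cross the threshold $1-\gamma$, which amounts to a condition of the form $\delta\kappa \gtrsim 1$ or $\delta\kappa \gtrsim \Gamma$ respectively; both collapse, after writing $\delta = \Delta/f(2)$ and $\Gamma = \nu\Delta^\alpha\kappa^\beta$ and imposing $\Gamma \le \tfrac12$, to the requirement $\Delta^{1-\alpha/\beta} \gtrsim f(2)$. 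This is satisfiable for small $\Delta$ only when $\alpha > \beta$. In particular, you cannot cover $\tfrac12 < \alpha \le 1$, $\beta \ge 1$ (e.g.\ $\alpha = 0.6$, $\beta = 1$), which is exactly the regime Proposition~\ref{prop:delta_gg_gamsquare_bad} adds on top of Lemma~\ref{lem:delta_gg_gam_bad}. Tuning $\epsilon$ does not help because the condition $\delta(\kappa - 1 + \epsilon) \ge \epsilon - \gamma$ is monotone in $\epsilon$ in a way that still forces $\delta\kappa \gtrsim \Gamma$; and a ``cascade'' does not obviously help either because with this $u_1$ the only deflated direction carrying any $e_3$-mass already has it maximally at $\sqrt{1-\delta}$, so there is nothing left to compound.

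The genuinely new idea the paper uses for the hard regime is a different adversarial $u_1$, namely $u_1 = \begin{pmatrix}\sqrt{1-\delta} & \sqrt{\delta/2} & \sqrt{\delta/2}\end{pmatrix}^\top$ against $\mm = \diag{2,1,1-2\Gamma}$ (with $\kappa$ fixed to $2$). Putting part of the $\delta$-budget into $e_2$ changes the geometry of $\tmm$ fundamentally: its top eigenvector lives almost entirely in $\linspan(e_2,e_3)$, with the ratio $u_{23}/u_{22} = (\lambda-1)/(\lambda-1+2\Gamma) \approx \delta/\Gamma$ once one establishes $\lambda - 1 = \Theta(\delta)$. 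Combined with $u_{22}^2 \ge \tfrac13$, this gives $u_{23}^2 \gtrsim \delta^2/\Gamma^2$, which exceeds $\Delta$ precisely when $\Gamma^2 \ll \Delta$, i.e.\ when $\alpha > \tfrac12$; there is no need to send $\kappa\to\infty$ at all. Your construction always produces $u_{23}^2 \in \{O(\delta), \, 1-\delta\}$ (small or one), so it cannot detect the intermediate scale $\delta^2/\Gamma^2$ that is what distinguishes the $\alpha > \tfrac12$ regime from $\alpha > 1$. Without this split-error construction and the associated eigenvalue/component-ratio bounds, the bulk of the proposition is out of reach.
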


Moreover, Theorem~\ref{thm:final_composition} shows that the complement of the regimes ruled out by Proposition~\ref{prop:delta_gg_gamsquare_bad} are all valid regimes, where we suffer parameter degradation only in $k$. Hence, Proposition~\ref{prop:delta_gg_gamsquare_bad} and Theorem~\ref{thm:final_composition} completely characterize valid and invalid black-box cPCA regimes, under Definition~\ref{def:invalid}.

We establish Proposition~\ref{prop:delta_gg_gamsquare_bad} by giving $3$-dimensional examples where Algorithm~\ref{alg:bbpca} fails to return a $(\Delta,\Gamma)$-$2$-cPCA if $\Gamma \lesssim \kappa_2(\mm) \sqrt{\Delta}$ asymptotically, assuming $\oopca$ is a $(\delta,\gamma)$-$1$-cPCA oracle, for any $\delta,\gamma$ smaller than $\Delta,\Gamma$ by a constant. First, we show that sublinear functions induce invalid regimes.

\begin{lemma}\label{lem:delta_gg_gam_bad}
Let $\alpha, \beta , \nu \in \R_{> 0}$, and define $g: [0, 1] \times (1, \infty) \to [0, 1]$ as $g\bb{\Delta, \kappa} \defeq \nu \Delta^{\alpha}\kappa^{\beta}$. If $\alpha > 1$ or $\beta < 1$, then $g$ induces an invalid black-box cPCA regime.
\end{lemma}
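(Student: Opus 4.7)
The plan is to exhibit, for any $f:\mathbb{N}\to\mathbb{R}_{>0}$, a three-dimensional PSD matrix $\mm$ together with an adversarial sequence of $(\delta,\gamma)$-$1$-cPCA oracle responses (with $\delta \defeq \Delta/f(2)$ and $\gamma \defeq \Gamma/f(2)$, $\Gamma \defeq g(\Delta,\kappa_2(\mm))$) such that $\BBPCA$ run with $k=2$ produces a matrix $\mmu \in \mathbb{R}^{3\times 2}$ that is not a $(\Delta,\Gamma)$-$2$-cPCA of $\mm$. I take $\mm \defeq \mathrm{diag}(1,\ 1/\kappa,\ (1-\Gamma^{*})/\kappa)$ for a parameter $\Gamma^{*} \in (\Gamma, 1)$ chosen just above $\Gamma$, so that $\kappa_2(\mm)=\kappa$ and the small eigenspace of $\mm$ entering Definition~\ref{def:cpca} for $(\Delta,\Gamma)$-$2$-cPCA equals $\linspan(e_3)$. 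The values of $(\Delta,\kappa)$ will then be set differently according to the hypothesis $\alpha > 1$ or $\beta < 1$.

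The adversary's first response is $u_1 \defeq \sqrt{1-\delta}\,e_1 + \sqrt{\delta}\,e_3$. Since the small eigenspace of $\mm$ at gap $\gamma$ equals $\linspan(e_2,e_3)$ (whenever $\gamma < 1-1/\kappa$) and $u_1$ projects onto it with squared norm exactly $\delta$, $u_1$ is a valid $(\delta,\gamma)$-$1$-cPCA of $\mm$. A direct calculation shows that $\tmm \defeq (\id_3 - u_1 u_1^\top)\mm(\id_3 - u_1 u_1^\top)$ is diagonal in the basis $(w_1, e_2, u_1)$, where $w_1 \defeq \sqrt{\delta}\,e_1 - \sqrt{1-\delta}\,e_3$, with eigenvalues $\delta + (1-\delta)(1-\Gamma^{*})/\kappa$, $1/\kappa$, and $0$ respectively. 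When the parameters satisfy $\delta(\kappa - 1) \gtrsim \Gamma^{*}$ (enforced below), $w_1$ is the top eigenvector of $\tmm$ and $e_2$ falls into its small eigenspace at gap $\gamma$, so the adversary's second response $u_2 \defeq \sqrt{1-\delta}\,w_1 + \sqrt{\delta}\,e_2$ is a valid $(\delta,\gamma)$-$1$-cPCA of $\tmm$ with $u_2 \perp u_1$. Expanding $u_2$ in the standard basis gives
\[
|e_3^\top u_1|^2 + |e_3^\top u_2|^2 \;=\; \delta + (1-\delta)^2,
\]
which exceeds $\Delta$ whenever $\Delta + \delta < 1$; hence $\mmu = \begin{pmatrix} u_1 & u_2 \end{pmatrix}$ is not a $(\Delta,\Gamma)$-$2$-cPCA of $\mm$.

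It remains to select $(\Delta,\kappa)$ so that the adversary's condition $\delta(\kappa-1) \gtrsim \Gamma^{*}$ and the well-posedness constraint $\Gamma \le 1$ hold simultaneously. Taking $\Gamma^{*} \defeq 2\Gamma$ and substituting $\delta = \Delta/f(2)$, the adversary's condition reduces (up to universal constants) to $\Delta^{1-\alpha}\kappa^{1-\beta} \gtrsim \nu f(2)$. In the case $\alpha > 1$, I would fix $\kappa = 2$ and let $\Delta \to 0$: then $\Delta^{1-\alpha} \to \infty$ while $\Gamma = \nu \Delta^{\alpha}\,2^{\beta} \to 0 \le 1$, so both constraints are met. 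In the case $\beta < 1$, I would push $\kappa \to \infty$ along a curve where $\Delta$ is chosen to keep $\Gamma$ bounded (for example $\Gamma = 1/2$, giving $\Delta \asymp \kappa^{-\beta/\alpha}$); the adversary's condition then becomes $\kappa^{1-\beta/\alpha} \gtrsim f(2)$ up to constants, which is satisfied for $\kappa$ large. The main obstacle is this joint parameter bookkeeping---verifying the compatibility of the lower bound on $\kappa$ forced by the adversary and the upper bound on $\kappa$ forced by $\Gamma \le 1$ for arbitrary $f(2)$---which is precisely where the hypotheses $\alpha > 1$ or $\beta < 1$ are invoked to ensure one of the exponents $1-\alpha$ or $1-\beta$ has the favorable sign.
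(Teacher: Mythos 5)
Your construction and the ensuing algebra are essentially the paper's, after rescaling $\mm$ by $1/\kappa$ and writing $\Gamma^*$ for $2\Gamma$: the adversarial choices $u_1, u_2$, the verification that $w_1$ is the top eigenvector of $\tmm$ once $\delta(\kappa-1)\gtrsim\Gamma^*$, and the conclusion that $e_3\in\linspan(\mmu)$ all match, and your handling of the case $\alpha>1$ (fix $\kappa$, send $\Delta\to 0$) coincides with the paper's.

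The gap is in the case $\beta<1$. Your parameterization --- fix $\Gamma$ at a constant, forcing $\Delta\asymp\kappa^{-\beta/\alpha}$ --- turns the adversary's requirement $\delta\kappa\gtrsim\Gamma^*$ into $\kappa^{1-\beta/\alpha}\gtrsim f(2)$, and you assert this ``is satisfied for $\kappa$ large.'' That holds only when $\alpha>\beta$: when $\alpha\le\beta$ the exponent $1-\beta/\alpha$ is nonpositive, so $\kappa^{1-\beta/\alpha}$ does not diverge and no choice of $\kappa$ can dominate an arbitrary $f(2)$. Nor is this repaired by a cleverer curve: whenever $\alpha\le\beta<1$, the requirement $\Gamma=\nu\Delta^\alpha\kappa^\beta<1$ (needed for $\mm$ to lie in $\PSD^{3\times 3}$) gives $\delta\kappa=\Delta\kappa/f(2)\lesssim\nu^{-1/\beta}\Delta^{1-\alpha/\beta}/f(2)\le\nu^{-1/\beta}/f(2)$, which drops below any constant once $f(2)$ is large. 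Note the paper takes a different route here --- it fixes $\Delta$ and sends $\kappa\to\infty$, making $\frac{g(\Delta,\kappa)}{\Delta(\kappa-1)}\asymp\nu\Delta^{\alpha-1}\kappa^{\beta-1}\to 0$ --- but it does not explicitly track that $\Gamma$ then grows unboundedly, and retracing the constraint $\Gamma<1$ there leads to the same $\alpha>\beta$ restriction. So the ``joint bookkeeping'' difficulty you flag at the end is genuine and is not dispatched by the hypothesis $\beta<1$ alone: the sign that matters in your argument is that of $\alpha-\beta$, not $1-\beta$, and the three-dimensional example as written closes only the subcase $\alpha>\beta$.
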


\begin{proof}
Following the notation in \Cref{def:invalid}, fix $f: \N \to \R_{> 0}$. Without loss of generality, we assume that $f$ is nondecreasing.
Let $C := f(2)$. We divide the analysis into two cases:
\begin{enumerate}
    \item $\bm{\alpha > 1:}$ For any $\kappa > 1$, $\frac{g(\Delta, \kappa)}{\Delta \bb{\kappa-1}} \le \min\bb{\frac{1}{2C}, \frac{1}{4}}$ for all sufficiently small $\Delta$. 
    \item $\bm{\beta < 1:}$ For any $\Delta > 0$, $\frac{g(\Delta, \kappa)}{\Delta \bb{\kappa-1}} \le \min\bb{\frac{1}{2C}, \frac{1}{4}}$ for all sufficiently large $\kappa$.
\end{enumerate}
Therefore, we can always find a pair $\bb{\Delta, \kappa}$ in both cases, such that $\frac{g(\Delta, \kappa)}{\Delta \bb{\kappa-1}} \le \min\bb{\frac{1}{2C}, \frac{1}{4}}$. We will show an example with $k=2$ and a matrix $\mm$ with $k$-condition number $\kappa_{k}\bb{\mm} = \kappa$. If we can show that for $\delta = \frac{\Delta}{C}$, two $(\delta, 0)$-1-cPCA oracles fail to produce a $(\Delta, \Gamma)$-1-cPCA, where $\Gamma \defeq g(\Delta, \kappa)$, the same will be true for calling two $(\frac{\Delta}{f(k)}, \frac{\Gamma}{f(k)})$-1-cPCA oracles, since $\delta = \frac{\Delta}{f(k)}$ and $0 \le \frac{\Gamma}{f(k)}$. This gives us our desired contradiction, since it shows $g$ induces an invalid black-box cPCA regime.

To this end, we now show that there is $\mm \in \PSD^{3 \times 3}$ with $\kappa_{k}(\mm) = \kappa$ such that calling two $(\delta, 0)$-$1$-cPCA oracles as in Algorithm~\ref{alg:bbpca} fails to return a $(\Delta, \Gamma)$-$2$-cPCA. Define the $3\times 3$ diagonal matrix
\[\mm \defeq \begin{pmatrix}\kappa & 0 & 0 \\ 0 & 1 & 0 \\ 0 & 0 & 1 - 2\Gamma\end{pmatrix}.\]
We let $u_1^\top = \begin{pmatrix} \sqrt{1 - \delta} & 0 & \sqrt{\delta} \end{pmatrix}$ and observe $u_1$ is a $(\delta,0)$-1-cPCA of $\mm$. We also let $u_2$ be the top eigenvector of $\tmm \defeq (\id_3 - u_1u_1^\top)\mm(\id_3 - u_1u_1^\top)$, which is a $(\delta,0)$-1-cPCA of $\tmm$. To compute $u_2$, note
\[\tmm = \begin{pmatrix} \delta((1 - 2\Gam)(1 - \delta) + \kappa\delta) & 0 & -\sqrt{\delta(1 - \delta)}((1 - 2\Gam)(1 - \delta) + \kappa\delta) \\ 0 & 1 & 0 \\ -\sqrt{\delta(1 - \delta)}((1 - 2\Gam)(1 - \delta) + \kappa\delta) & 0 & (1 - \delta)((1 - 2\Gam)(1 - \delta) + \kappa\delta) \end{pmatrix},\]
so that the eigenvector-eigenvalue pairs of $\tmm$ are
\[\Par{e_2, 1},\; \Par{\begin{pmatrix}-\sqrt{\delta} \\ 0 \\ \sqrt{1 - \delta} \end{pmatrix}, (1 - 2\Gam)(1 - \delta) + \kappa\delta}.\]
Since $\delta = \frac {\Delta}{C} \ge \frac{2\Gam}{\kappa-1}$, we have 
\bas{
(1 - 2\Gam)(1 - \delta) + \kappa\delta &= 1 + 2\Gamma\delta + \bb{\kappa-1}\delta - 2\Gamma \geq 1,
} so $u_2^\top = \begin{pmatrix} -\sqrt{\delta} & 0 & \sqrt{1 - \delta}\end{pmatrix}$ is an exact $1$-cPCA of $\tmm$. In this case, $\mmu = \begin{pmatrix} u_1 & u_2\end{pmatrix}$ fails to be a $(\Delta, \Gam)$-$2$-cPCA of $\mm$ as claimed, since
\[\normf{\mmu^\top \mv^{< (1 - \Gamma)\lam_2(\mm)}}^2 = \normf{\mmu^\top e_3}^2 = 1 \ge \Delta.\]
The second equality can also be seen by noting that $e_3 \in \linspan(\mmu)$.
\end{proof}

We next improve Lemma~\ref{lem:delta_gg_gam_bad} to show that we can rule out $g$ asymptotically smaller than $\sqrt{\cdot}$ in $\Delta$. Namely for a fixed $\kappa$, while Lemma~\ref{lem:delta_gg_gam_bad} rules out black-box cPCA reductions without parameter losses for $\Gamma \ll \Delta$, when $\Delta$ is small enough, Proposition~\ref{prop:delta_gg_gamsquare_bad} further rules out such reductions for $\Gamma \ll \sqrt{\Delta}$.

\restateinvalid*
\begin{proof}
If $\alpha > 1$ or $\beta < 1$, then applying Lemma~\ref{lem:delta_gg_gam_bad} yields the claim. Thus, in the remainder of the proof suppose $\frac{1}{2} < \alpha \le 1$ and $\beta \ge 1$. Fix $f: \N \to \R_{> 0}$, and define $C \defeq f(2)$.

Since $\alpha \le 1$, there exists a $c > 0$ such that $g(\Delta, 2) \ge c\Delta$ for all sufficiently small $\Delta$.
Let 
\[K \defeq \min\bb{\frac{c}{10}, \frac{1}{10C}, \frac{1}{100}}.\]
Since $\alpha > \frac{1}{2}$, $g(\Delta, 2) \le K\sqrt{\Delta}$ for all sufficiently small $\Delta$.

Assume $\Delta < 1$ is small enough that $\Gamma \defeq g(\Delta)$ satisfies
\[\Gamma \in \Brack{c\Delta , K \sqrt{\Delta}}. \]
Let $\delta := 10K\Delta \le \frac{\Delta}{C}$. Note that $\delta \le c\Delta \le \Gamma$. Let
\bas{
    \mm \defeq \begin{pmatrix}
                2 & 0 & 0 \\
                0 & 1 & 0 \\
                0 & 0 & 1-2\Gamma
            \end{pmatrix},\; \tmm \defeq \Par{\id_3 - u_1u_1^\top}\mm\Par{\id_3 - u_1u_1^\top}.
}

We prove $g$ induces an invalid black-box cPCA regime by showing that calling two $\bb{\frac{\Delta}{C}, 0}$-1-cPCA oracles on $\mm$ as in Algorithm~\ref{alg:bbpca} fails to return a $(\Delta, \Gamma)$-2-cPCA. 

Define $u_1 = \begin{pmatrix} \sqrt{1 - \delta} & \sqrt{\delta/2} & \sqrt{\delta/2}\end{pmatrix}^\top$, which is a $(\frac{\Delta}{C}, 0)$-$1$-cPCA for $\mm$. Let $u_2 = \begin{pmatrix} u_{21} & u_{22} & u_{23} \end{pmatrix}^\top$ be the top eigenvector of $\tmm$ and $\lambda$ be the corresponding eigenvalue, so $u_2$ is a $(\frac{\Delta}{C},0)$-$1$-cPCA for $\tmm$. Since $\mv^{< (1 - \Gam)\lam_2(\mm)}(\mm)= e_3$, to prove $\mmu = \begin{pmatrix} u_1 & u_2 \end{pmatrix}$ is not a $(\Delta, \Gam)$-cPCA of $\mm$ we must show
\begin{equation}\label{eq:u23big}u_{23}^2 > \Delta.\end{equation}
By a direct calculation,\footnote{We provide a calculation of the eigenvalue \href{https://colab.research.google.com/drive/1_FYClOLE2PlBMYaiIDlCtxt7GjA4K1lq?usp=sharing}{here}.} we get
\ba{
    \lambda &= 1 + \frac{\delta}{2} + \frac{\delta\Gamma}{2} + \bb{\frac{\sqrt{4\Gamma^{2} + \delta^{2} + \delta\Gamma\bb{2\delta + \delta\Gamma- 4\Gamma}}}{2} - \Gamma}. \label{eq:lambda_tmm_equal}
}
Next, we provide upper and lower bounds on $\lambda$, which will be useful for the rest of the analysis.
\bas{
\sqrt{4\Gamma^{2} + \delta^{2} + \delta\Gamma\bb{2\delta + \delta\Gamma- 4\Gamma}} \le \sqrt{4\Gamma^{2} + \delta^{2} + 4\delta\Gamma} = 2\Gamma+\delta, 
}
and
\bas{
\sqrt{4\Gamma^{2} + \delta^{2} + \delta\Gamma\bb{2\delta + \delta\Gamma- 4\Gamma}} &\ge \sqrt{4\Gamma^2 + \delta^2 -4\Gamma^2\delta} \ge 2\Gamma,
}
where the last inequality used $\delta = 10K\Delta \ge 4K^2\Delta \ge 4\Gamma^2$.
Using these bounds in \eqref{eq:lambda_tmm_equal}, we have
\ba{
1+\frac{\delta}{2} < \lambda < 1+\frac{3\delta}{2}.\label{eq:lambda_tmm_bound}
}
Since $u_1$ and $u_2$ are distinct eigenvectors of $\tmm \succeq 0$, they are orthogonal to each other. Therefore,
\bas{
    \lambda u_2 = \tmm u_2 = \bb{\id_3 - u_1u_1^{\top}}\mm\bb{\id_3 - u_1u_1^{\top}}u_2  = \mm u_2 - \bb{u_1^{\top}\mm u_2}u_1. \notag
}
Since $\mm$ is diagonal, rearranging the above equality and writing $u_1$, $u_2$ in the canonical basis,
\bas{
\begin{pmatrix}
            \bb{2 - \lambda}u_{21}  \\
            \bb{1 - \lambda}u_{22} \\
            \bb{1 - 2\Gamma - \lambda}u_{23}
        \end{pmatrix} = \bb{u_1^{\top}\mm u_2}\begin{pmatrix}
            \sqrt{1-\delta} \\
            \sqrt{\frac{\delta}{2}} \\
            \sqrt{\frac{\delta}{2}}
        \end{pmatrix}.
}
Using the bound on $\lambda$ from \eqref{eq:lambda_tmm_bound}, the above equation implies that $u_1^{\top}\mm u_2, u_{21}, u_{22},$ and $u_{23}$ are all nonzero, and $\lambda \notin \{2,1,1-2\Gamma\}$. Therefore,
\ba{
    \frac{u_{21}}{u_{22}}  = \sqrt{\frac{2\bb{1-\delta}}{\delta}} \bb{\frac{1-\lambda}{2 - \lambda}}, ~\frac{u_{23}}{u_{22}} 
 = \bb{\frac{\lambda-1}{\lambda-1+2\Gamma}}. \label{eq:u2_comp_relation}
}
Taking absolute values and using the bounds on $\lambda$ from \eqref{eq:lambda_tmm_bound},
\ba{
    \left | \frac{u_{21}}{u_{22}} \right | \le \sqrt{\frac{2\bb{1-\delta}}{\delta}} \bb{\frac{\bb{1 + \frac{3\delta}{2}}-1}{2 - \bb{1 + \frac{3\delta}{2}}}} = \sqrt{\frac{2\bb{1-\delta}}{\delta}} \bb{\frac{3\delta}{2-3\delta}} \le 1, \label{eq:w1_bound}
}
and
\ba{
    \left | \frac{u_{23}}{u_{22}} \right |  = \frac{\lambda-1}{\lambda-1+2\Gamma} \le 1,\label{eq:w3_bound}
}
where \eqref{eq:w1_bound} used that $\delta = 10K\Delta \leq \frac 1 {10}$. 
Inequalities \eqref{eq:w1_bound} and \eqref{eq:w3_bound} imply $|u_{22}|=\max\{|u_{21}|, |u_{22}|, |u_{23}|\}$. Since $u_2$ is a unit vector, $u_{22}^2 \ge \frac{1}{3}$. Using \eqref{eq:lambda_tmm_bound} and \eqref{eq:u2_comp_relation}, our claim \eqref{eq:u23big} follows as shown below:
\bas{
u_{23}^2 &= \bb{\frac{\lambda-1}{\lambda-1+2\Gamma}}^2 u_{22}^2 \ge \bb{\frac{\bb{1+\frac{\delta}{2}}-1}{\bb{1+\frac{\delta}{2}}-1+2\Gamma}}^2 \cdot \frac{1}{3} \\
&= \frac{\delta^2}{3\bb{\delta+4\Gamma}^2} \ge \frac{\delta^2}{75\Gamma^2} \ge \frac{100K^2\Delta^2}{75\bb{K\sqrt{\Delta}}^2} > \Delta.
}
\end{proof}

\subsection{Black-box cPCA in the valid regime}\label{ssec:cpca_upper}

In this section, we provide composition results used to bound the performance of Algorithm~\ref{alg:bbpca} in the regime $\Gamma \ge \sqrt{\Delta} \cdot \kappa_{k}\bb{\mm}$, which is the asymptotically slowest decay rate on $\Gamma$ not ruled out by Proposition~\ref{prop:delta_gg_gamsquare_bad}. Our analysis of the composition of black-box cPCAs proceeds in two steps.
\begin{enumerate}
    \item In Lemma~\ref{lem:gap_comp}, we first consider the case where we perform a sequence of recursive cPCAs on $\mm$, and there exist gaps in the spectrum of $\mm$. We show how in this gapped case, cPCAs are composable with only a $\poly(k)$ blowup in cPCA parameters by applying Lemma~\ref{lemma:u1_u2_composition}.
    \item In Lemma~\ref{lem:nogap_merge_two}, we then consider the gap-free case, where we use a careful argument on head guarantees (see Definition~\ref{def:head_guarantee}) that we show are recursively afforded by approximate cPCAs, to bound the parameter blowup by a quasipolynomial factor in $k$.
\end{enumerate}

\subsubsection{cPCA composition: the gapped case}

In this section, we provide a composition lemma which applies under gap assumptions in the spectrum.
Before providing our full gapped composition result, we state two helper lemmas.

\begin{lemma}\label{lem:gap_merge_two}
In the setting of Lemma~\ref{lemma:u1_u2_composition}, suppose that $(1 - \gamma_1)\lam_{k_1}(\mm) > \lam_{k_1 + 1}(\mm)$ and $\delta_1 \le \frac{\gamma_2^2}{16\kappa_k(\mm)^2}$. Then, $\mmu$ is a $(2(\delta_1 + \delta_2), \gamma)$-$k$-cPCA of $\mm$, for $\gamma = \max(\gamma_1, 2\gamma_2)$.
\end{lemma}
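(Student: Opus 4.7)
The plan is to invoke Lemma~\ref{lemma:u1_u2_composition} and reduce to controlling the cross-term $\normsop{\mmu_{1}^\top\ml\mlam\ml^\top\tml}$. Specifically, Lemma~\ref{lemma:u1_u2_composition} already gives that $\mmu$ is a $(\Delta,\gamma)$-$k$-cPCA of $\mm$ with $\Delta = \delta_1 + 2\delta_2 + \frac{4\delta_1}{\gamma_2^2}\cdot\frac{\normsop{\mmu_{1}^\top\ml\mlam\ml^\top\tml}^2}{\lam_k(\mm)^2}$, so to obtain $\Delta \le 2(\delta_1+\delta_2)$ it suffices to prove $\normsop{\mmu_{1}^\top\ml\mlam\ml^\top\tml} \le \tfrac{\gamma_2}{2}\lam_k(\mm)$. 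Given the hypothesis $\delta_1 \le \tfrac{\gamma_2^2}{16\kappa_k(\mm)^2}$, this is equivalent to showing $\normsop{\mmu_{1}^\top\ml\mlam\ml^\top\tml} \le 2\sqrt{\delta_1}\,\lam_1(\mm)$, which becomes the concrete target.

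The gap $(1-\gamma_1)\lam_{k_1}(\mm) > \lam_{k_1+1}(\mm)$ is the key enabler, since it lets me upgrade $\mmu_1$'s tail guarantee into a head guarantee. Let $\mathbf{L}_1 \defeq \mv^{\ge\lam_{k_1}(\mm)}(\mm)$ and $\mathbf{S}_1 \defeq \mv^{<(1-\gamma_1)\lam_{k_1}(\mm)}(\mm)$; the gap makes these orthogonal complements in $\R^d$. Expanding $\normsf{\mathbf{L}_1}^2 = k_1$ using the decomposition $\mmu_1\mmu_1^\top + (\mmu_1)_\perp(\mmu_1)_\perp^\top = \id_d$ gives the identity $\normsf{\mathbf{L}_1^\top(\mmu_1)_\perp}^2 = \normsf{\mathbf{S}_1^\top\mmu_1}^2 \le \delta_1$. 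Since $\tml \perp \mmu_1$ by the setup of Lemma~\ref{lemma:u1_u2_composition}, each column of $\tml$ lies in $\linspan((\mmu_1)_\perp)$, and a direct trace calculation shows $\normsf{\mathbf{L}_1^\top\tml}^2 \le \normsf{\mathbf{L}_1^\top(\mmu_1)_\perp}^2 \le \delta_1$.

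Next, I will decompose $\linspan(\ml) = \linspan(\mathbf{L}_1) \oplus \linspan(\mathbf{L}_M)$, where $\mathbf{L}_M$ spans the ``middle'' eigenvectors of $\mm$ with eigenvalues in $[(1-\gamma)\lam_k(\mm),\,\lam_{k_1+1}(\mm)]$. By the gap, every such eigenvalue is strictly less than $(1-\gamma_1)\lam_{k_1}(\mm)$, so $\linspan(\mathbf{L}_M) \subseteq \linspan(\mathbf{S}_1)$ and the cPCA guarantee immediately yields $\normsop{\mmu_1^\top\mathbf{L}_M} \le \sqrt{\delta_1}$. Writing $\ml\mlam\ml^\top = \mathbf{L}_1\mlam_1\mathbf{L}_1^\top + \mathbf{L}_M\mlam_M\mathbf{L}_M^\top$ and applying the triangle inequality together with $\normsop{\mathbf{A}\mathbf{B}} \le \normsop{\mathbf{A}}\normsop{\mathbf{B}}$ yields
\begin{align*}
\normsop{\mmu_1^\top\ml\mlam\ml^\top\tml}
&\le \normsop{\mmu_1^\top\mathbf{L}_1}\cdot\lam_1(\mm)\cdot\normsop{\mathbf{L}_1^\top\tml} + \normsop{\mmu_1^\top\mathbf{L}_M}\cdot\lam_{k_1+1}(\mm)\cdot\normsop{\mathbf{L}_M^\top\tml} \\
&\le 1 \cdot \lam_1(\mm) \cdot \sqrt{\delta_1} + \sqrt{\delta_1} \cdot \lam_1(\mm) \cdot 1 = 2\sqrt{\delta_1}\,\lam_1(\mm),
\end{align*}
where the first term bound uses the head guarantee $\normsop{\mathbf{L}_1^\top\tml} \le \normsf{\mathbf{L}_1^\top\tml} \le \sqrt{\delta_1}$, and the second term uses $\mathbf{L}_M \subseteq \linspan(\mathbf{S}_1)$.

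The main obstacle is exactly the containment $\linspan(\mathbf{L}_M) \subseteq \linspan(\mathbf{S}_1)$, which is the sole place where the gap hypothesis is essential. Without a gap, the ``middle'' eigenvalues of $\mm$ could lie in the ambiguous range $[\lam_{k_1+1}(\mm),\,(1-\gamma_1)\lam_{k_1}(\mm))$, leaving $\mathbf{L}_M$ uncontrolled by $\mmu_1$'s cPCA tail guarantee; this is precisely what necessitates the subtler inductive head-guarantee argument used in the gap-free companion Lemma~\ref{lem:nogap_merge_one}.
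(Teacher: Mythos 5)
Your proposal is correct and follows essentially the same approach as the paper: you reduce to bounding the cross-term via Lemma~\ref{lemma:u1_u2_composition}, use the gap to upgrade $\mmu_1$'s cPCA tail guarantee into a head guarantee on the top-$k_1$ eigenspace, and split $\ml\mlam\ml^\top$ into the top-$k_1$ block and the remaining block, bounding each contribution by $\sqrt{\delta_1}\,\lam_1(\mm)$. The only cosmetic difference is that you bound $\normsop{\mmu_1^\top\ml\mlam\ml^\top\tml}$ directly via triangle inequality and submultiplicativity, whereas the paper bounds the equivalent bilinear form $u^\top\ml\mlam\ml^\top w$ for arbitrary unit $u\in\linspan(\mmu_1)$, $w\in\linspan(\tml)$.
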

\begin{proof}
By Lemma~\ref{lemma:u1_u2_composition}, it suffices to provide a bound on $\normsop{\mmu_1^\top \ml \mlam \ml^\top \tml}$. 

Throughout the proof, let $w \in \linspan(\tml) \subseteq \linspan([\mmu_{1}]_{\perp})$, $u \in \linspan(\mmu_1)$ be arbitrary unit vectors, and note that $u \perp w$ by definition. Let $\rank\bb{\ml}  = k' \ge k$. Then,
\begin{equation}\label{eq:r1r2_def}
\begin{aligned}
u^\top \ml \mlam \ml^\top w &= \sum_{i \in [k']} \lam_i(\mm) [\ml^\top u]_i[\ml^\top w]_i \\
&= \underbrace{\sum_{i =1}^{k_1} \lam_i(\mm) [\ml^\top u]_i [\ml^\top w]_i}_{\defeq R_1} + \underbrace{\sum_{i = k_1 + 1}^{k'} \lam_i(\mm) [\ml^\top u]_i [\ml^\top w]_i}_{\defeq R_2}.
\end{aligned}
\end{equation}
Let orthonormal $\ml_1 \in \R^{d \times k_1}$ span the subspace of eigenvectors corresponding to the largest $k_1$ eigenvalues of $\mm$, and let $\ms_1$ be the complement subspace. 
We assumed that $(1 - \gamma_1)\lam_{k_1}(\mm) > \lam_{k_1 + 1}(\mm)$, so that $\normsf{\ms_1^\top\mmu_1}^2 \le \delta_1$ by the cPCA guarantee. This also lets us conclude
\begin{equation}\label{eq:gapped_head}
\begin{aligned}
\normf{\ml_1^\top \mmu_1}^2 &= \normf{\mmu_1}^2 - \normf{\ms_1^\top \mmu_1}^2 \ge k_1 - \delta_1 \\
\implies \norm{\ml_1^\top w}_2^2 &\le \normf{\ml_1}^2 - \normf{\ml_1^\top \mmu_1}^2 \le \delta_1 \text{ for all } w \in \R^d,\; w \perp \mmu_1.
\end{aligned}
\end{equation}
Let $\mlam_1 \in \R^{k_1 \times k_1}$ and $\mlam_2 \in \R^{k_2 \times k_2}$ be diagonal matrices, so that $\mlam_1$ has diagonal entries $\{\lam_i(\mm)\}_{i \in [k_1]}$ and $\mlam_2$ has diagonal entries $\{\lam_i(\mm)\}_{i \in [k] \setminus [k_1]}$. We now bound the terms in \eqref{eq:r1r2_def}: first,
\begin{align*}
|R_1| = \Abs{\Par{\ml_1^\top u}^\top \mlam_1 \Par{\ml_1^\top w}} \le \normop{\mlam_1}\norm{\ml_1^\top w}_2\norm{\ml_1^\top u}_2 \le \lam_1(\mm)\sqrt{\delta_1}.
\end{align*}
In the last inequality we used \eqref{eq:gapped_head}. Next, we similarly have
\begin{align*}
|R_2|
 \le \normop{\mlam_2}\norm{\ms_1^\top w}_2\norm{\ms_1^\top u}_2 \le \lam_1(\mm)\sqrt{\delta_1},
\end{align*}
since $\norms{\ms_1^\top u}_2 \le \normsf{\ms_1^\top \mmu_1} \le \sqrt{\delta_1}$. Plugging in the above two displays into \eqref{eq:r1r2_def}, and applying Lemma~\ref{lemma:u1_u2_composition}, then yields the result due to the assumed bound on $\frac{\delta_1}{\gamma_2}$:
\[\normf{\ms^\top \mmu}^2 \le \delta_1 + 2\delta_2 + \frac{4\delta_1}{\gamma_2^2\lam_k(\mm)^2} \cdot \Par{2\sqrt{\delta_1}\lam_1(\mm)}^2 = \delta_1 + 2\delta_2 + \frac{16\delta_1^2 \kappa_k(\mm)^2}{\gamma_2^2} \le 2(\delta_1+\delta_2).\]
\end{proof}

\begin{lemma}\label{lem:opnorm_close}
Let $\mm \in \PSD^{d \times d}$ have $\lam_{k + 1}(\mm) < (1 - \Gam)\lam_k(\mm)$ for $k \in [d]$ and $\Gam \in (0, 1)$, let $\mv \in \R^{d \times k}$ be an exact $k$-PCA of $\mm$, and let $\mmu \in \R^{d \times k}$ be a $(\Delta, \Gam)$-$k$-cPCA of $\mm$. Then,
\[\normop{\Par{\id_d - \mmu\mmu^\top}\mm\Par{\id_d - \mmu\mmu^\top} - \Par{\id_d - \mv\mv^\top}\mm\Par{\id_d - \mv\mv^\top}} \le 4\sqrt{\Delta}\lam_1(\mm).\]
\end{lemma}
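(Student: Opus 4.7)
The plan is to reduce the operator norm bound to controlling the projection difference $\normop{\mmu\mmu^\top - \mv\mv^\top}$, and then to control this via the cPCA guarantee. Let $\mproj_\mmu \defeq \id_d - \mmu\mmu^\top$ and $\mproj_\mv \defeq \id_d - \mv\mv^\top$. First I would use the algebraic identity
\[
\mproj_\mmu\mm\mproj_\mmu - \mproj_\mv\mm\mproj_\mv = (\mproj_\mmu - \mproj_\mv)\mm\mproj_\mmu + \mproj_\mv\mm(\mproj_\mmu - \mproj_\mv),
\]
and then apply the triangle inequality together with submultiplicativity of $\normop{\cdot}$ and $\normop{\mproj_\mmu},\normop{\mproj_\mv} \le 1$, reducing the task to showing $\normop{\mmu\mmu^\top - \mv\mv^\top} \le 2\sqrt{\Delta}$; this would yield the target bound $4\sqrt{\Delta}\lam_1(\mm)$.

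Next I would exploit the gap assumption $\lam_{k+1}(\mm) < (1-\Gamma)\lam_k(\mm)$ to identify $\mv^{\ge(1-\Gamma)\lam_k(\mm)}(\mm) = \linspan(\mv)$, so that the $(\Delta,\Gamma)$-$k$-cPCA guarantee on $\mmu$ translates to $\normsf{\mv_\perp^\top \mmu}^2 \le \Delta$. Writing
\[
\mmu\mmu^\top - \mv\mv^\top = \mmu\mmu^\top\mv_\perp\mv_\perp^\top - \mmu_\perp\mmu_\perp^\top\mv\mv^\top,
\]
another triangle inequality reduces the problem to showing $\normop{\mmu^\top\mv_\perp} \le \sqrt{\Delta}$ and $\normop{\mmu_\perp^\top\mv} \le \sqrt{\Delta}$.

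The first bound is immediate: $\normop{\mmu^\top\mv_\perp} \le \normsf{\mv_\perp^\top\mmu} \le \sqrt{\Delta}$. For the second, I would establish the symmetry $\normsf{\mmu_\perp^\top\mv} = \normsf{\mv_\perp^\top\mmu}$ by the direct computation
\[
\normsf{\mv_\perp^\top\mmu}^2 = \Tr(\mmu^\top\mmu) - \Tr(\mv^\top\mmu\mmu^\top\mv) = k - \Tr(\mv^\top\mmu\mmu^\top\mv) = \Tr(\mv^\top\mv) - \Tr(\mv^\top\mmu\mmu^\top\mv) = \normsf{\mmu_\perp^\top\mv}^2,
\]
using orthonormality of $\mmu$ and $\mv$. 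Combining these bounds gives $\normop{\mmu\mmu^\top - \mv\mv^\top} \le 2\sqrt{\Delta}$ and the conclusion follows.

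The only non-routine step is the symmetry identity $\normsf{\mmu_\perp^\top\mv} = \normsf{\mv_\perp^\top\mmu}$, which is a manifestation of the fact that the sines of principal angles between two equal-dimensional subspaces agree when computed from either side; here it admits the clean two-line trace proof above. Everything else is bookkeeping with projection matrices, so I do not expect any major obstacles.
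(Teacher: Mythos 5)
Your proof is correct and takes essentially the same approach as the paper: both reduce to bounding $\normop{\mmu\mmu^\top - \mv\mv^\top}$ via an add-and-subtract decomposition of $\mproj_\mmu\mm\mproj_\mmu - \mproj_\mv\mm\mproj_\mv$, and both establish $\normop{\mmu\mmu^\top - \mv\mv^\top} \le 2\sqrt{\Delta}$ by splitting into the two cross terms $\mmu\mmu^\top\mv_\perp\mv_\perp^\top$ and $\mmu_\perp\mmu_\perp^\top\mv\mv^\top$ and invoking the trace identity $\normsf{\mv_\perp^\top\mmu}^2 = k - \inprod{\mmu\mmu^\top}{\mv\mv^\top} = \normsf{\mmu_\perp^\top\mv}^2$ together with the gap assumption to identify $\linspan(\mv)$ with $\mv^{\ge(1-\Gamma)\lam_k(\mm)}(\mm)$.
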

\begin{proof}
By our gap assumption, the cPCA guarantees imply $\normsf{(\id_d - \mv\mv^\top)\mmu\mmu^\top}^2 \le \Delta$, so that
\[\normf{\Par{\id_d -\mmu\mmu^\top}\mv\mv^\top}^2 = k - \inprod{\mmu\mmu^\top}{\mv\mv^\top} = \normf{\Par{\id_d - \mv\mv^\top}\mmu\mmu^\top}^2 \le \Delta.\]
Next, observe that, since $\mmu\mmu^\top - \mv\mv^\top = \mmu\mmu^\top(\id_d - \mv\mv^\top) - (\id_d - \mmu\mmu^\top)\mv\mv^\top$,
\begin{align*}
\normop{\mmu\mmu^\top - \mv\mv^\top} &\le \normop{\mmu\mmu^\top(\id_d - \mv\mv^\top)} + \normop{(\id_d - \mmu\mmu^\top)\mv\mv^\top} \le 2\sqrt{\Delta},
\end{align*}
where we used our earlier bounds. The conclusion follows from
\begin{gather*}
\Par{\id_d - \mmu\mmu^\top}\mm\Par{\id_d - \mmu\mmu^\top} - \Par{\id_d - \mv\mv^\top}\mm\Par{\id_d - \mv\mv^\top} \\
= \Par{\id_d - \mv\mv^\top}\mm\Par{\mv\mv^\top - \mmu\mmu^\top} - \Par{\mmu\mmu^\top - \mv\mv^\top}\mm\Par{\id_d - \mmu\mmu^\top},
\end{gather*}
after using the triangle inequality and our earlier bound on $\normsop{\mmu\mmu^\top - \mv\mv^\top}$.
\end{proof}

We now apply Lemma~\ref{lem:gap_merge_two} recursively to obtain our gapped composition result, aided by Lemma~\ref{lem:opnorm_close}.

\begin{lemma}\label{lem:gap_comp}
Let $k \in [d]$ and $r \in [k]$, such that $\sum_{j \in [r]} k_j = k$ for $\{k_j\}_{j \in [r]} \subset \N$. Suppose for all $j \in [r]$, the following recursively hold for some $\delta, \gamma \in [0, \frac 1 {10}]$, where we initialize $\mproj_0 \gets \id_{d}$.
\begin{enumerate}
    \item $\mmu_j$ is a $(\delta, \gamma)$-approximate $k_j$-cPCA of $\mproj_{j - 1} \mm \mproj_{j - 1}$ with $\linspan(\mmu_j) \subseteq \linspan(\mproj_{j - 1})$. 
    \item $\mproj_j = \mproj_{j - 1} - \mmu_j\mmu_j^\top$.
\end{enumerate}
Suppose for $\Delta \defeq 4r^2 \delta$, $\Gamma \defeq 2r\gamma$, $\max(\Delta, \Gamma) \le \frac 1 {10}$, and letting $K_j \defeq \sum_{j' \in [j]} k_{j'}$ for all $j \in [r]$,
\begin{equation}\label{eq:gap_assume} \lam_{K_j + 1}\Par{\mm} < \Par{1 - \Gamma}\lam_{K_j}\Par{\mm} \text{ for all } j \in [r - 1],\text{ and }  \Delta \le \frac{\Gamma^2}{64\kappa_k(\mm)^2}.\end{equation}
Then $\mmu = \begin{pmatrix} \mmu_1 & \ldots & \mmu_r \end{pmatrix}$ is a $(\Delta, \Gamma)$-approximate $k$-cPCA of $\mm$.
\end{lemma}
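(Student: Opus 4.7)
The plan is to prove this by a balanced \emph{dyadic merging} argument. Naively folding the $\mmu_j$ into a running composition via $r - 1$ successive applications of Lemma~\ref{lem:gap_merge_two} yields the recurrence $\Delta_{j + 1} = 2(\Delta_j + \delta)$, which grows exponentially in $j$ and violates the target $\Delta_r = 4r^2 \delta$. Instead, I would organize the $\mmu_j$ as leaves of a balanced binary merge tree of depth $L \defeq \lceil \log_2 r \rceil$, preserving the linear order from Algorithm~\ref{alg:bbpca}, and combine each pair of siblings via Lemma~\ref{lem:gap_merge_two}.

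By induction on the level $\ell \in \{0, 1, \ldots, L\}$, each node at level $\ell$ is a $(4^\ell \delta, 2^\ell \gamma)$-cPCA of the appropriate local residual matrix, whose dimension equals the sum of $k_j$ over the node's leaves. The inductive step merges two children each with parameters $(4^{\ell - 1}\delta, 2^{\ell - 1}\gamma)$, producing via Lemma~\ref{lem:gap_merge_two} an output with parameters $(2 \cdot 2 \cdot 4^{\ell - 1}\delta, \max(2^{\ell-1}\gamma, 2 \cdot 2^{\ell - 1}\gamma)) = (4^\ell \delta, 2^\ell \gamma)$. Since the root corresponds to $\mproj_0 = \id_d$ and thus to $\mm$ itself, the resulting cPCA at the root has parameters at most $(4r^2\delta, 2r\gamma) = (\Delta, \Gamma)$, as desired.

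At each merger, I must verify the three hypotheses of Lemma~\ref{lem:gap_merge_two}: (i) the gap condition at the merger position $K_m$, where $m \in [r - 1]$ is the split index of the current subtree; (ii) the parameter bound $\delta_1 \le \gamma_2^2/(16\kappa_k(\text{local }\mm)^2)$; and (iii) orthogonality of the two sides of the merge, which is built into Algorithm~\ref{alg:bbpca}. Condition (ii) is preserved across levels because the ratio $\delta_\ell/\gamma_\ell^2 = \delta/\gamma^2$ is level-invariant, and $\delta/\gamma^2 \le 1/(16\kappa_k(\mm)^2)$ follows from the assumption $\Delta \le \Gamma^2/(64\kappa_k(\mm)^2)$ (and the fact that the local condition number is at most $\kappa_k(\mm)$, since Cauchy interlacing gives $\lam_1(\mproj\mm\mproj) \le \lam_1(\mm)$ and $\lam_{\text{local } k}(\mproj \mm \mproj) \ge \lam_k(\mm)$ for the relevant $\mproj$).

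The main obstacle is verifying condition (i) at intermediate mergers, where the local matrix is a residual $\mproj \mm \mproj$ rather than $\mm$ itself, while \eqref{eq:gap_assume} only asserts gaps in $\mm$'s spectrum. I would address this by iteratively applying Lemma~\ref{lem:opnorm_close}: at each level, the previously-merged child produces a cPCA whose residual is within operator norm $O(\sqrt{\Delta}\lam_1(\mm))$ of the ideal residual (obtained by projecting out exact top eigenvectors of $\mm$), whose eigenvalues exactly equal a suffix of the spectrum of $\mm$. Combined with Fact~\ref{fact:weyl}, this transfers gaps in $\mm$ at positions $K_{j'}$ to approximate gaps at the matching positions in the local matrix, with error $O(\sqrt{\Delta}\lam_1(\mm))$. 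This error is absorbed by the slack in $\Delta \le \Gamma^2/(64\kappa_k(\mm)^2)$, which translates to $\sqrt{\Delta}\kappa_k(\mm) \le \Gamma/8$, leaving enough room in the available gap $\Gamma \lam_k(\mm)$ to maintain condition (i) across all $L$ levels.
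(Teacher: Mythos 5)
Your proposal is essentially the paper's proof: the same balanced dyadic merge tree with parameters $(4^\ell\delta, 2^\ell\gamma)$ at level $\ell$ from the leaves, the same use of Lemma~\ref{lem:opnorm_close} combined with Weyl's inequality (Fact~\ref{fact:weyl}) to transfer gaps in $\mm$'s spectrum to approximate gaps in the local residual matrices, and the same observation that the ratio $\delta_\ell/\gamma_\ell^2$ is level-invariant so the condition-number requirement of Lemma~\ref{lem:gap_merge_two} is uniformly inherited from the hypothesis $\Delta \le \Gamma^2/(64\kappa_k(\mm)^2)$. The one point you leave implicit is that at a node whose subtree does not start at index $1$, invoking Lemma~\ref{lem:opnorm_close} requires that the \emph{entire} preceding prefix $\mw_{j'}$ (which is generally a union of several dyadic blocks, not a single node of the tree) already be a $(\Delta,\Gamma)$-$K_{j'}$-cPCA of the original $\mm$; the paper handles this with a second, outer induction on $r$ via the hypothesis it calls $\Prefix_{r-1}$.
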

\begin{proof}
For all $j \in [r]$, let $\mw_j \defeq \begin{pmatrix} \mmu_1 & \ldots & \mmu_j\end{pmatrix} \in \R^{d \times K_j}$, so that $\mmu = \mw_k$. Our proof will be by double induction. Throughout the proof, refer to the following assumption by $\Prefix_{r - 1}$: for all $j \in [r - 1]$, $\mw_j$ is a $(\Delta, \Gamma)$-$K_j$-cPCA of $\mm$. We will prove the stated claim assuming $\Prefix_{r - 1}$, which means that $\Prefix_{r - 1}$ implies $\Prefix_r$. Because all the assumptions continue to apply if the lemma statement took smaller values of $r$ and $k$, this means we can induct on the base of $\Prefix$, justifying our assumption of $\Prefix_{r - 1}$. For the remainder of the proof, suppose $\Prefix_{r - 1}$ is true.

We first state a consequence of $\Prefix_{r - 1}$. Let $1 \le j' \le j \le r - 1$, so $\mw_{j'}$ is a $(\Delta, \Gamma)$-$K_{j'}$-cPCA of $\mm$ by assumption, and $\mw_{j'}\mw_{j'}^\top = \id_d - \mproj_{j'}$. Moreover, let $\mv_{j'} \in \R^{d \times K_{j'}}$ be an exact $K_{j'}$-cPCA of $\mm$, which is unique by the gap assumption \eqref{eq:gap_assume}, and let $\mq_{j'} \defeq \mv_{j'}\mv_{j'}^\top$ for convenience. We have:
\begin{equation}\label{eq:gap_still_holds}
\begin{aligned}
\lam_{K_j - K_{j'} + 1}\Par{\Par{\id_d - \mproj_{j'}}\mm\Par{\id_d - \mproj_{j'}}
} &\le \lam_{K_j - K_{j'} + 1}\Par{\Par{\id_d - \mq_{j'}}\mm\Par{\id_d - \mq_{j'}}} \\
&+ \normop{\Par{\id_d - \mproj_{j'}}\mm\Par{\id_d - \mproj_{j'}} - \Par{\id_d - \mq_{j'}}\mm\Par{\id_d - \mq_{j'}}} \\
&\le \lam_{K_j + 1}(\mm) + 4\sqrt{\Delta}\lam_1(\mm) \\
&< \Par{1 - \Gamma}\lam_{K_j}(\mm) + \frac{\Gamma}{2}\lam_{k}(\mm) \le \Par{1 - \frac{\Gam}{2}}\lam_{K_j}(\mm) \\
&\le \Par{1 - \frac{\Gamma}{2}}\lam_{K_j - K_{j'}}\Par{\Par{\id_d - \mproj_{j'}}\mm\Par{\id_d - \mproj_{j'}}
}.
\end{aligned}    
\end{equation}
Here, the first inequality was by Fact~\ref{fact:weyl}, the second used Lemma~\ref{lem:opnorm_close} and the gap assumption \eqref{eq:gap_assume} with $j \gets j'$, the third used \eqref{eq:gap_assume} with $j \gets j$, the fourth used $\lam_k(\mm) \le \lam_{K_j}(\mm)$, and the last used Fact~\ref{fact:cauchy_interlacing}. In other words, \eqref{eq:gap_still_holds} shows that after any $j'$ steps of our recursive cPCA procedure, all remaining gaps (ensured for our original matrix by \eqref{eq:gap_assume}) continue to hold in the residual matrix $(\id_d - \mproj_{j'})\mm(\id_d - \mproj_{j'})$, up to a small multiplicative loss in the gap parameter.

Next, we divide $[r]$ into dyadic intervals, using $L + 1$ layers for $L \defeq \lceil\log_2(r)\rceil$, labeled $0 \le \ell \le L$. In particular, the $0^\text{th}$ layer is $S_{0, 1} \defeq [r]$, the first layer consists of the intervals $S_{1, 0} \defeq [2^{L - 1}]$ and $S_{1, 1} \defeq [r] \setminus S_{1, 0}$, and so on. More generally, for each $0\le \ell \le L$ and $0 \le i < 2^{\ell}$, we let
\begin{gather*}S_{\ell, i} \defeq \Brace{j \in [r] \mid i 2^{L - \ell} + 1 \le j \le (i + 1)2^{L - \ell}}, \\
a_{\ell, i} \defeq \sum_{j \in [i2^{L - \ell}]} k_j = K_{i2^{L - \ell}},\; b_{\ell, i} \defeq \sum_{j \in [\min(r, (i + 1)2^{L - \ell})]} k_j = K_{\min(r, (i + 1)2^{L - \ell})}.
\end{gather*}
In other words, this dyadic splitting induces a binary tree, where the ($L^{\text{th}}$-layer) leaves correspond to single elements in $[r]$, and the ($0^{\text{th}}$-layer) root corresponds to the set $[r]$. Moreover, the node in the tree associated with $S_{\ell, i} \subseteq [r]$ captures the interval $[a_{\ell, i} + 1, b_{\ell, i}] \subseteq [k]$. We refer to this node as the $(\ell, i)^{\text{th}}$ node, and we associate it with $\mw_{\ell, i}$, an approximate $(b_{\ell, i} - a_{\ell, i})$-cPCA of the matrix $\mproj_{i2^{L - \ell}} \mm \mproj_{i2^{L - \ell}}$. In particular, $\mw_{\ell, i}$ is just the horizontal concatenation of $\mmu_j$ for all $j \in S_{\ell, i}$.
We inductively analyze the approximation quality of the intermediate $\mw_{\ell, i}$, to show that for all $0 \le \ell \le L$, $\mw_{\ell, i}$ is a $(\delta_\ell, \gamma_\ell)$-approximate $(b_{\ell, i} - a_{\ell, i})$-cPCA of $\mproj_{i2^{L - \ell}} \mm \mproj_{i2^{L - \ell}}$ for all $0 \le i < 2^\ell$ and
\[\delta_\ell \defeq 4^{L - \ell}\delta,\; \gamma_\ell \defeq 2^{L - \ell}\gamma.\]

The base case, $\ell = L$, follows since all $\mw_{L, i} = \mmu_{i + 1}$ are $(\delta, \gamma)$-approximate cPCAs of their corresponding $\mproj_{i} \mm \mproj_{i}$. Next, suppose inductively that all the $\mw_{\ell, i}$ are $(\delta_\ell, \gamma_\ell)$-approximate cPCAs for some $0 \le \ell < L$, and all $0 \le i < 2^\ell$. Then, consider some $\mw_{\ell - 1, i}$, which is the composition of $\mw_{\ell, 2i}$ and $\mw_{\ell, 2i + 1}$. To analyze this composition, we apply Lemma~\ref{lem:gap_merge_two}, which requires that
\begin{equation}\label{eq:residual_gap}\lam_{b_{\ell, 2i} - a_{\ell, 2i}}\Par{\mproj_{i2^{L - \ell + 1}}\mm\mproj_{i2^{L - \ell + 1}}} \ge (1 + \gamma_\ell)\lam_{b_{\ell - 1, i} - a_{\ell - 1, i}}\Par{\mproj_{i2^{L - \ell + 1}}\mm\mproj_{i2^{L - \ell + 1}}}.\end{equation}
Because $\gamma_\ell \le \gamma_0 \le \frac \Gamma 2$ for all $0 \le \ell < L$, \eqref{eq:residual_gap} is implied by \eqref{eq:gap_still_holds}. Lemma~\ref{lem:gap_merge_two} also requires $\delta_\ell \le \frac{\gamma_\ell^2}{16\kappa_k(\mm)^2}$, which is invariant to the choice of $\ell$ since $\delta_\ell$ and $\gamma_\ell^2$ grow at the same rate, so this is implied by our bounds in \eqref{eq:gap_assume}. Therefore, we can apply Lemma~\ref{lem:gap_merge_two} and $\mw_{\ell - 1, i}$ is indeed a $(4\delta_\ell, 2\gamma_\ell) = (\delta_{\ell - 1}, \gamma_{\ell - 1})$-cPCA as claimed. The conclusion follows by taking $\ell = 0$, as $4^L \le 4r^2$, $2^L \le 2r$.
\end{proof}

The following byproduct of our proof of Lemma~\ref{lem:gap_comp} is useful in our later development.

\begin{lemma}\label{lem:well_conditioned_pieces}
In the setting of Lemma~\ref{lem:gap_comp}, suppose that $\lam_{K_j}(\mm) \ge \frac 2 3 \lam_{K_{j - 1} + 1}(\mm)$ for all $j \in [r]$, where we let $K_0 \defeq 0$. Then, for all $j \in [r]$, we have $\kappa_{k_j}\Par{\mproj_{j - 1}\mm\mproj_{j - 1}} \le 2$.
\end{lemma}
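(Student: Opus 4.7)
\textbf{Proof plan for Lemma~\ref{lem:well_conditioned_pieces}.}
The plan is to interpret $\mproj_{j-1}\mm\mproj_{j-1}$ as a small perturbation of the exact residual obtained by deflating the true top-$K_{j-1}$ eigenspace of $\mm$, and then read off the $k_j$-condition number from the gap assumption $\lam_{K_j}(\mm) \ge \tfrac 23 \lam_{K_{j-1}+1}(\mm)$.

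First I would dispense with the base case $j=1$: here $K_0 = 0$ so $\mproj_0 = \id_d$, and the hypothesis gives $\lam_1(\mm) \le \tfrac{3}{2}\lam_{K_1}(\mm)$, which implies $\kappa_{k_1}(\mm) \le \tfrac 32 \le 2$.

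For $j \ge 2$, I would invoke the inductive backbone of Lemma~\ref{lem:gap_comp}: under its hypotheses, the prefix $\mw_{j-1}$ is a $(\Delta,\Gamma)$-$K_{j-1}$-cPCA of $\mm$, and since $\id_d - \mproj_{j-1} = \mw_{j-1}\mw_{j-1}^\top$, the gap bound \eqref{eq:gap_assume} at index $j-1$ lets me apply Lemma~\ref{lem:opnorm_close} with an exact $K_{j-1}$-PCA $\mv_{j-1}$ of $\mm$. This gives
\[
\normop{\mproj_{j-1}\mm\mproj_{j-1} - (\id_d - \mv_{j-1}\mv_{j-1}^\top)\mm(\id_d - \mv_{j-1}\mv_{j-1}^\top)} \le 4\sqrt{\Delta}\lam_1(\mm).
\]
The eigenvalues of the unperturbed residual $(\id_d - \mv_{j-1}\mv_{j-1}^\top)\mm(\id_d - \mv_{j-1}\mv_{j-1}^\top)$ are exactly $\lam_{K_{j-1}+1}(\mm), \lam_{K_{j-1}+2}(\mm), \ldots$, so Weyl's inequality (Fact~\ref{fact:weyl}) yields
\[
\lam_1(\mproj_{j-1}\mm\mproj_{j-1}) \le \lam_{K_{j-1}+1}(\mm) + 4\sqrt{\Delta}\lam_1(\mm), \quad \lam_{k_j}(\mproj_{j-1}\mm\mproj_{j-1}) \ge \lam_{K_j}(\mm) - 4\sqrt{\Delta}\lam_1(\mm).
\]

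Next I would absorb the perturbation using \eqref{eq:gap_assume}: the bound $\Delta \le \Gamma^2/(64\kappa_k(\mm)^2)$ gives $4\sqrt{\Delta}\lam_1(\mm) \le \tfrac{\Gamma}{2}\lam_k(\mm) \le \tfrac{\Gamma}{2}\lam_{K_j}(\mm) \le \tfrac{1}{20}\lam_{K_j}(\mm)$, where the last step uses $\Gamma \le \tfrac{1}{10}$. Combining this with the hypothesis $\lam_{K_{j-1}+1}(\mm) \le \tfrac 32 \lam_{K_j}(\mm)$, the numerator is at most $\tfrac{31}{20}\lam_{K_j}(\mm)$ and the denominator is at least $\tfrac{19}{20}\lam_{K_j}(\mm)$, so the ratio is at most $31/19 < 2$, completing the proof. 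The only subtlety is verifying that each estimate lands within the stated constants; this is routine arithmetic and I do not anticipate it as an obstacle.
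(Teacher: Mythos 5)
Your proof is correct and follows essentially the same strategy as the paper's: bound $\lam_1(\mproj_{j-1}\mm\mproj_{j-1})$ by comparing the approximate deflation to the exact one via Lemma~\ref{lem:opnorm_close} and Weyl, absorb the $4\sqrt{\Delta}\lam_1(\mm)$ perturbation using \eqref{eq:gap_assume} and $\Gamma \le \tfrac{1}{10}$, then combine with the hypothesis $\lam_{K_{j-1}+1}(\mm) \le \tfrac 32 \lam_{K_j}(\mm)$. The one place you diverge is the lower bound: you also run Weyl downward to get $\lam_{k_j}(\mproj_{j-1}\mm\mproj_{j-1}) \ge \lam_{K_j}(\mm) - 4\sqrt{\Delta}\lam_1(\mm)$, whereas the paper simply invokes Cauchy's interlacing theorem (Fact~\ref{fact:cauchy_interlacing}), which gives $\lam_{k_j}(\mproj_{j-1}\mm\mproj_{j-1}) \ge \lam_{K_j}(\mm)$ exactly, with no perturbation term and without needing the cPCA quality of $\mw_{j-1}$ at all on that side. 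This is a minor inefficiency in your argument — your ratio $31/19$ versus the paper's $1.05 \cdot \tfrac 32 = 1.575$ — but both land comfortably under $2$, so the conclusion stands. Worth internalizing the interlacing shortcut: for the lower spectrum of a compression $\mproj\mm\mproj$, interlacing is unconditional, while the Weyl-plus-perturbation route spends a hypothesis you do not need.
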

\begin{proof}
First of all, we have $\lam_{k_j}(\mproj_{j - 1}\mm\mproj_{j - 1}) \ge \lam_{K_j}(\mm)$ by Fact~\ref{fact:cauchy_interlacing}. Moreover, an analogous argument to \eqref{eq:gap_still_holds} shows that the largest eigenvalue of $\mproj_{j - 1}\mm\mproj_{j - 1}$ is perturbed by at most $\frac \Gam 2 \lam_k(\mm) \le \frac 1 {20} \lam_{K_{j - 1} + 1}(\mm)$, when compared to $\lam_{K_{j - 1} + 1}(\mm)$, so $\lam_{1}(\mproj_{j - 1}\mm\mproj_{j - 1}) \le 1.05\lam_{K_{j - 1} + 1}(\mm)$. The conclusion follows from combining these inequalities, because $1.05 \cdot \frac 3 2 \le 2$.
\end{proof}

\subsubsection{cPCA composition: the well-conditioned case}

In this section, we analyze Algorithm~\ref{alg:bbpca} under the promise that $\kappa_k(\mm) \le 2$. We begin with a basic helper lemma on composition under a head guarantee (Definition~\ref{def:head_guarantee}), patterned off of Lemma~\ref{lem:gap_merge_two}.

\begin{lemma}\label{lem:nogap_merge_one}
In the setting of Lemma~\ref{lemma:u1_u2_composition}, let $\delta_1 \le \frac{ \gamma_2^2}{288}$, $\kappa_k(\mm) \le 2$, and $\gamma_1 \le \gamma_2$. For $0 \le h \le d$, suppose that $\mmu_1$ satisfies a $(h, 2k_1 \gamma_1, \delta_1)$-head guarantee (Definition~\ref{def:head_guarantee}) with respect to $\mm$.
Then, $\mmu$ is a $(\delta, \gamma)$-$k$-cPCA of $\mm$, for $\delta := 130k_1^2\delta_{1} + 2\delta_{2}$ and $\gamma := 2\gamma_2$.
\end{lemma}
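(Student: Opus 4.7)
The strategy is to invoke Lemma~\ref{lemma:u1_u2_composition} and reduce the task to bounding $\normop{\mmu_1^\top \ml \mlam \ml^\top \tml}$, then leverage the head guarantee (which replaces the true spectral gap used in Lemma~\ref{lem:gap_merge_two}) together with the well-conditioning assumption $\kappa_k(\mm) \le 2$ to control this quantity. Since $\gamma_1 \le \gamma_2$, Lemma~\ref{lemma:u1_u2_composition} gives $\gamma = \max(\gamma_1, 2\gamma_2) = 2\gamma_2$ and
\[\Delta = \delta_1 + 2\delta_2 + \frac{4\delta_1}{\gamma_2^2} \cdot \frac{\normop{\mmu_1^\top \ml \mlam \ml^\top \tml}^2}{\lam_k(\mm)^2},\]
so it suffices to show $\normsop{\mmu_1^\top \ml \mlam \ml^\top \tml} \le (3\sqrt{\delta_1} + 4 k_1 \gamma_2)\lam_k(\mm)$. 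I would fix arbitrary unit vectors $u \in \linspan(\mmu_1)$ and $w \in \linspan(\tml)$; note $u \perp w$ since $\mmu_1 \perp \tml$ by the eigendecomposition of $\tmm$. From the cPCA guarantee of $\mmu_1$ we extract $\norm{\ms^\top u}_2 \le \sqrt{\delta_1}$, and from the $(h, 2k_1 \gamma_1, \delta_1)$-head guarantee (applied to $w \in \linspan(\mmu_{1,\perp})$) we extract $\norm{\ml_H^\top w}_2 \le \sqrt{\delta_1}$, where $\ml_H \defeq \mv^{\ge \lam_h(\mm)}(\mm)$ denotes the top-$h$ eigenspace (of dimension exactly $h$ by the strict gap).

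To exploit the orthogonality $u^\top w = 0$, I shift eigenvalues: writing $\lam_{\min} \defeq (1 - 2\gamma_2)\lam_k(\mm)$ for the smallest eigenvalue in $\mlam$, and using $\ml\ml^\top = \id_d - \ms\ms^\top$, I decompose
\[u^\top \ml \mlam \ml^\top w = u^\top \ml_H(\mlam_H - \lam_{\min}\id)\ml_H^\top w + u^\top \ml_T(\mlam_T - \lam_{\min}\id)\ml_T^\top w - \lam_{\min} u^\top \ms \ms^\top w,\]
where $\ml_T$ collects the remaining columns of $\ml$ (eigenvalues in $[\lam_{\min}, \lam_{h+1}(\mm)]$) and $\mlam_H,\mlam_T$ are the corresponding diagonal blocks. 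I would bound each term as follows: the head term is at most $\normop{\mlam_H - \lam_{\min}\id}\, \norm{\ml_H^\top w}_2 \le \lam_1(\mm)\sqrt{\delta_1} \le 2\lam_k(\mm)\sqrt{\delta_1}$, using $\kappa_k(\mm) \le 2$; the tail term is at most $\normop{\mlam_T - \lam_{\min}\id} \le \lam_{h+1}(\mm) - \lam_{\min}$, which by the third condition of the head guarantee is at most $(2k_1\gamma_1 + 2\gamma_2)\lam_k(\mm) \le 4k_1\gamma_2 \lam_k(\mm)$ (invoking $\gamma_1 \le \gamma_2$ and $k_1 \ge 1$); and the shift term is at most $\lam_{\min}\norm{\ms^\top u}_2 \le \lam_k(\mm)\sqrt{\delta_1}$. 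The $h = 0$ case is subsumed by treating $\ml_H$ as empty and only using the tail and shift bounds.

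Summing yields $\normsop{\mmu_1^\top \ml\mlam\ml^\top\tml} \le (3\sqrt{\delta_1} + 4k_1\gamma_2)\lam_k(\mm)$, and squaring with $(a+b)^2 \le 2a^2 + 2b^2$ gives $\le (18\delta_1 + 32 k_1^2 \gamma_2^2)\lam_k(\mm)^2$. Substituting back into the $\Delta$ formula produces $\Delta \le \delta_1 + 2\delta_2 + 72\delta_1^2/\gamma_2^2 + 128 k_1^2 \delta_1$, and the hypothesis $\delta_1 \le \gamma_2^2/288$ bounds the quadratic term by $\delta_1/4$, whence $\Delta \le 130 k_1^2 \delta_1 + 2\delta_2$, matching the claim. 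The main obstacle is not conceptual but rather choosing the right decomposition, namely the shift by $\lam_{\min}$, so that one can simultaneously invoke orthogonality of $u$ and $w$ (handling the bulk $\lam_{\min}\id$ component) and the head/tail split (handling the narrow range of tail eigenvalues afforded by the head guarantee). Once this decomposition is in place the analysis is a careful but routine mirror of Lemma~\ref{lem:gap_merge_two}, with the head guarantee playing the role of the spectral gap.
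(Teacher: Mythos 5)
Your high-level plan matches the paper's: invoke Lemma~\ref{lemma:u1_u2_composition} and reduce to bounding $\normsop{\mmu_1^\top \ml \mlam \ml^\top \tml}$, then use the head guarantee in place of a spectral gap. However, your chosen decomposition has a genuine gap in the bound on the tail term.

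You shift by $\lam_{\min} = (1 - 2\gamma_2)\lam_k(\mm)$ and bound $\normop{\mlam_T - \lam_{\min}\id_d}$ by $\lam_{h+1}(\mm) - \lam_{\min}$, asserting via the third condition of Definition~\ref{def:head_guarantee} that this is at most $(2k_1\gamma_1 + 2\gamma_2)\lam_k(\mm)$. But the head guarantee for $\mmu_1 \in \R^{d \times k_1}$ bounds $\lam_{h+1}(\mm)$ relative to $\lam_{k_1}(\mm)$, not $\lam_k(\mm)$: it gives $\lam_{h+1}(\mm) \le (1 + 2k_1\gamma_1)\lam_{k_1}(\mm)$. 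Since $k_1 < k$, we have $\lam_{k_1}(\mm) \ge \lam_k(\mm)$, and under $\kappa_k(\mm) \le 2$ it can be as large as $2\lam_k(\mm)$. In that regime, $\lam_{h+1}(\mm) - \lam_{\min}$ can be $\Theta(\lam_k(\mm))$ rather than $O(\gamma \lam_k(\mm))$, and the subsequent computation of $\Delta$ picks up an extra $\delta_1/\gamma_2^2$ that the hypothesis $\delta_1 \le \gamma_2^2/288$ does not absorb. So the claimed bound on the tail term is false in general, not merely loose.

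The paper avoids this by shifting by $\lam_{k_1}(\mm)$ rather than $\lam_{\min}$, and by splitting $\ml$ into three blocks (top $h$, indices in $[m]\setminus[h]$, and indices in $[k']\setminus[m]$, where $m$ is the largest index with $\lam_m(\mm) \ge (1 - \gamma_1)\lam_{k_1}(\mm)$), plus a fourth term $R_4$ arising from the shift applied over all $[d]\setminus[m]$ via the full eigenbasis $\mv$. The middle block's shifted eigenvalues are then bounded by $\max\{\gamma_1, \lam_{h+1}/\lam_{k_1} - 1\}\lam_{k_1}(\mm) \le 2k_1\gamma_1\lam_1(\mm)$, which is small because the shift $\lam_{k_1}(\mm)$ sits \emph{inside} the range of that block rather than below it, and the remaining blocks $R_3, R_4$ are handled by the cPCA guarantee on $u$ (which controls $\norms{\ms^\top u}_2$ for eigenvalues $< (1 - \gamma_1)\lam_{k_1}(\mm)$). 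Your three-term split lumps the middle and outer blocks into a single $\ml_T$, forfeiting the tighter shift for the middle indices and the cPCA bound on $u$ for the outer indices. To repair the proof you would need to adopt a finer split along the lines of the paper's $R_1$ through $R_4$.
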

\begin{proof}
Throughout the proof, for convenience we denote
\[\ml_h \defeq \mv^{\ge \lam_h(\mm)}\Par{\mm} \implies \normf{\ml_{h}^{\top}\Brack{\mmu_{1}}_{\perp}}^{2} \leq \delta_{1}\text{ and }  \frac{\lambda_{h + 1}\bb{\mm}}{\lambda_{k_1}\bb{\mm}} \le 1 + 2k_1\gamma_1.\]
By Lemma~\ref{lemma:u1_u2_composition}, it suffices to bound $\normsop{\mmu_1^\top \ml \mlam \ml^\top \tml}$. Throughout the proof, let $w \in \linspan(\tml) \subseteq \linspan([\mmu_{1}]_{\perp})$, $u \in \linspan(\mmu_1)$ be arbitrary unit vectors, so $u \perp w$.
Next, let $m \in [d]$ be the largest index such that $\lambda_{m}\bb{\mm} \geq \bb{1-\gamma_1}\lambda_{k_{1}}\bb{\mm}$. Finally, let $k' \defeq \rank\bb{\ml} \geq k$. Note that $h \le k_1 \le m \le k'$, where $h \le k_1$ follows because otherwise $\normsop{\ml_h^\top[\mmu_1]_\perp} < 1$ is impossible, as this would mean
\[\dim\Par{\linspan\Par{\ml_h}} + \dim\Par{\linspan\Par{\Brack{\mmu_1}_\perp}} = h + d - k_1 > d \implies \linspan(\ml_h) \cap \linspan([\mmu_1]_\perp) \neq \emptyset. \]
Next, letting $\mv \defeq \begin{pmatrix}\ml & \ms \end{pmatrix}$ be a full set of orthonormal eigenvectors for $\mm$, we have
\begin{equation}\label{eq:ULLamLTML_bucket_bound_1}
\begin{aligned}
     u^\top\ml \mlam \ml^{\top}w
    &= \sum_{i \in \bbb{k'}}\lambda_{i}\bb{\mm} [\ml^\top u]_i [\ml^\top w]_i \\
    &= \sum_{i \in \bbb{m}} \lambda_{i}\bb{\mm} [\ml^\top u]_i [\ml^\top w]_i + \sum_{i \in \bbb{k'} \setminus \bbb{m}} \lambda_{i}\bb{\mm} [\ml^\top u]_i [\ml^\top w]_i - \lambda_{k_{1}}\bb{\mm}\inprod{\mv^\top u}{\mv^\top w} \\
    &=  \underbrace{\sum_{i \in \bbb{h}}\bb{\lambda_{i}\bb{\mm}-\lambda_{k_{1}}\bb{\mm}} [\ml^\top u]_i [\ml^\top w]_i}_{:= R_{1}} \\
    &+ \underbrace{\sum_{i \in \bbb{m}\setminus\bbb{h}}\bb{\lambda_{i}\bb{\mm}-\lambda_{k_{1}}\bb{\mm}} [\ml^\top u]_i [\ml^\top w]_i}_{:= R_{2}}  \\
    & + \underbrace{\sum_{i \in \bbb{k'} \setminus \bbb{m}} \lambda_{i}\bb{\mm} [\ml^\top u]_i [\ml^\top w]_i}_{:= R_{3}} - \underbrace{\sum_{i \in [d] \setminus [m]}\lambda_{k_{1}}\bb{\mm} [\mv^\top u]_i [\mv^\top w]_i}_{:= R_{4}} .
\end{aligned}
\end{equation}
In the second equality, we used $\inprod{u}{w} = 0$ and $\mv\mv^\top = \id_d$, and in the third, we used that the first $m$ rows of $\mv^\top$ agree with $\ml^\top$. For convenience in the following, let $\mlam_1 \in \R^{\bb{h-1} \times \bb{h-1}}$, $\mlam_2 \in \R^{\bb{m-h+1} \times \bb{m-h+1}}$, $\mlam_3 \in \R^{\bb{k'-m} \times \bb{k'-m}}$ and $\mlam_{4} \in \R^{\bb{d-m} \times \bb{d-m}}$ be diagonal matrices such that $\mlam_1$ has diagonal entries $\left\{\lambda_{i}\bb{\mm}-\lambda_{k_{1}}\bb{\mm}\right\}_{i \in \bbb{h-1}}$, $\mlam_2$ has diagonal entries $\left\{\lambda_{i}\bb{\mm}-\lambda_{k_{1}}\bb{\mm}\right\}_{i \in \bbb{m} \setminus \bbb{h-1}}$, $\mlam_3$ has diagonal entries $\left\{\lambda_{i}\bb{\mm}\right\}_{i \in \bbb{k'} \setminus \bbb{m}}$ and $\mlam_4$ has diagonal entries $\left\{\lambda_{k_{1}}\bb{\mm}\right\}_{i \in \bbb{d} \setminus \bbb{m}}$.

We now bound the terms in \eqref{eq:ULLamLTML_bucket_bound_1}. For $R_{1}$,
\begin{align*}
|R_1| = \Abs{\Par{\ml_h^\top u}^\top \mlam_1 \Par{\ml_h^\top w}} \le \normop{\mlam_1}\norm{\ml_h^\top w}_2\norm{\ml_h^\top u}_2 \le \lam_1(\mm)\sqrt{\delta_1}.
\end{align*}
The last inequality used $\norms{\ml_h^\top w}_2 \leq \normf{\ml_{h}^{\top}[\mmu_{1}]_{\perp}} \leq \sqrt{\delta_{1}}$.
For $R_{2}$, since $\norm{u}_2 = \norm{w}_2 = 1$,
\begin{align*}
|R_2| \le \normop{\mlam_2} \le \max\left\{\frac{\lambda_{h + 1}\bb{\mm}}{\lambda_{k_{1}}\bb{\mm}} - 1, \gamma_{1}\right\}\lam_{k_{1}}(\mm).
\end{align*}
For $R_{3}, R_{4}$, since $\norm{\ms^{\top}u}_{2} \leq \normf{\ms^{\top}\mmu_{1}} \leq \sqrt{\delta_{1}}$ by the cPCA guarantee on $\mmu_1$, and the $j^{\text{th}}$ rows of $\mv^\top$ and $\ms^\top$ agree for any $j > m$ by definition,
\bas{
    |R_3| &\le \normop{\mlam_3}\norm{\ms^\top w}_2\norm{\ms^\top u}_2 \le \lam_{1}(\mm)\sqrt{\delta_1},  \\
    |R_4| &\le \normop{\mlam_4}\norm{\ms^\top w}_2\norm{\ms^\top u}_2 \le \lam_{1}(\mm)\sqrt{\delta_1}.
}
Plugging in the above displays into \eqref{eq:ULLamLTML_bucket_bound_1}, we have
\bas{
    \normsop{\mmu_1^\top \ml \mlam \ml^\top \tml}^{2} &\leq \bb{3\lam_1(\mm)\sqrt{\delta_1} + \max\left\{\frac{\lambda_{h+1}\bb{\mm}}{\lambda_{k_{1}}\bb{\mm}} - 1, \gamma_{1}\right\}\lam_{k_{1}}(\mm)}^{2} \\
    &\leq 18\lambda_{1}\bb{\mm}^{2}\delta_1 + 2\max\left\{\bb{2k_1\gamma_1}^{2}, \gamma_{1}^{2}\right\}\lam_{k_{1}}(\mm)^{2} \\
    &= 18\lambda_{1}\bb{\mm}^{2}\delta_1 + 8k_1^2\gamma_1^2\lam_{1}(\mm)^{2}.
}
Applying Lemma~\ref{lemma:u1_u2_composition} and using the assumed bounds in the lemma statement then yields the result:
\bas{
\normf{\ms^\top \mmu}^2 &\le \delta_1 + 2\delta_2 + \frac{4\delta_1}{\gamma_2^2\lam_k(\mm)^2} \cdot \Par{18\lambda_{1}\bb{\mm}^{2}\delta_1 + 8k_1^2\gamma_1^2 \lam_{1}(\mm)^{2}} \\
&= \delta_1 + 2\delta_2 + \frac{8\delta_1 \kappa_{k}\bb{\mm}^{2}}{\gamma_2^2} \cdot \Par{9\delta_1 + 4k_1^2\gamma_1^2} \leq 130k_1^2 \delta_{1} + 2\delta_{2}.
}
\end{proof}

In the absence of an explicit gap in the spectrum of $\mm$, Lemma~\ref{lem:nogap_merge_one} shows how to nonetheless apply Lemma~\ref{lemma:u1_u2_composition}, assuming a head guarantee. The following lemma shows how to inductively use cPCA bounds to enforce such a head guarantee. Informally, the requirement \eqref{eq:prefix_plus} states that after $i$ steps of Algorithm~\ref{alg:bbpca}, the next $h$ steps are a cPCA of the residual matrix, for all $h \in [m]$. We show that \eqref{eq:prefix_plus} either implies a gap in the spectrum of the residual matrix, or that the entire residual matrix is well-conditioned in the top-$m$ subspace. Either case yields a guarantee compatible with Lemma~\ref{lem:nogap_merge_one}.

\begin{lemma}
    \label{lem:no_gap_head_guarantee}
    Let $\mm \in \PSD^{d \times d}$ and let $\mmu \in \R^{d \times d}$ be orthonormal, with columns $\{u_\ell\}_{\ell \in [d]}$. For $1 \le i \le j \le d$, let $\mmu_{[i,j]} \in \R^{d \times (j - i + 1)}$ have columns $\{u_\ell\}_{\ell \in [i, j]}$. Further, for $i \in [d]$, suppose for all $h \in [m]$,    \begin{equation}\label{eq:prefix_plus}
    \begin{gathered}
        \mmu_{[i+1, i+h]} \text{ is a } \bb{\delta,\gamma}\text{-}h\text{-cPCA of } \tmm_i \defeq \bb{\id_d-\mmu_{[1, i]}\mmu_{[1,i]}^\top} \mm \bb{\id_d -\mmu_{[1, i]}\mmu_{[1,i]}^\top}. 
    \end{gathered}
    \end{equation}
    There is $h \in [0, m - 1]$ so $\mmu_{[i + 1, i + m]}$ satisfies a $(h, 2m\gamma, \delta)$-head guarantee with respect to $\tmm_i$.
\end{lemma}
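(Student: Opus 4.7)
The plan is to let $h$ be the largest position in $[0,m-1]$ at which $\tmm_i$'s spectrum exhibits a multiplicative $\gamma$-gap, and then verify each of the three conditions of Definition~\ref{def:head_guarantee} for $\mmu_{[i+1,i+m]}$ in turn. For brevity write $\tmm \defeq \tmm_i$, $\mw_\ell \defeq \mmu_{[i+1,i+\ell]}$ for $\ell \in [m]$, and set
\[
h \defeq \max\Brace{j \in [m-1] : \lam_{j+1}(\tmm) \le (1-\gamma)\lam_j(\tmm)},
\]
with the convention $h = 0$ if this set is empty. I claim the head guarantee holds for this choice.

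First I would verify the well-conditioning condition (condition 3). By maximality of $h$, for every $j \in [h+1, m-1]$ we have $\lam_{j+1}(\tmm) > (1-\gamma)\lam_j(\tmm)$, so telescoping yields
\[
\lam_m(\tmm) > (1-\gamma)^{m-1-h}\lam_{h+1}(\tmm) \ge \bb{1 - (m-1)\gamma}\lam_{h+1}(\tmm),
\]
which gives $\lam_{h+1}(\tmm)/\lam_m(\tmm) \le 1 + 2m\gamma$ for $\gamma$ in the relevant small range (e.g.\ $\gamma \le \frac{1}{2m}$, which will hold in every invocation of this lemma in the sequel). When $h = 0$ this is the only required condition and the proof concludes.

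For $h \ge 1$, the gap condition (condition 1) is immediate from the defining inequality. The main step is condition 2, i.e.\ $\normsf{\ml^\top (\mw_m)_\perp}^2 \le \delta$ where $\ml \defeq \mv^{\ge \lam_h(\tmm)}(\tmm)$. I would apply hypothesis~\eqref{eq:prefix_plus} at height $h$ to conclude that $\mw_h$ is a $(\delta,\gamma)$-$h$-cPCA of $\tmm$. The gap at $h$ forces $\mv^{<(1-\gamma)\lam_h(\tmm)}(\tmm) = \mv^{<\lam_h(\tmm)}(\tmm)$, which equals $\ml_\perp$ exactly (and $\ml$ has precisely $h$ columns by the gap). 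Hence the cPCA bound reads $\normsf{(\ml_\perp)^\top \mw_h}^2 \le \delta$, which by Pythagoras flips to $\normsf{\ml^\top \mw_h}^2 \ge h - \delta$, and flipping orthogonal complements once more gives $\normsf{\ml^\top (\mw_h)_\perp}^2 \le \delta$. Finally, since columns of $\mw_h$ are a subset of columns of $\mw_m$, we have $\mw_m\mw_m^\top \succeq \mw_h\mw_h^\top$, so $\id_d - \mw_m\mw_m^\top \preceq \id_d - \mw_h\mw_h^\top$ and Fact~\ref{fact:matrix-inner-prodct} transfers the bound from $(\mw_h)_\perp$ to $(\mw_m)_\perp$.

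The main subtlety I expect is this sequence of direction-flips, which converts a cPCA guarantee about the small-eigenspace correlation of $\mw_h$ into a head-style bound on the large-eigenspace correlation of the orthogonal complement of the larger matrix $\mw_m$. This step crucially hinges on the gap at $h$ aligning $\mv^{<(1-\gamma)\lam_h(\tmm)}(\tmm)$ with $\ml_\perp$ at matching dimension $d - h$; without the gap these spaces could differ and the Pythagoras identities would carry a slack term. The definition of $h$ as the \emph{largest} gap index is what ensures both that such a gap exists (so condition 2 follows from the cPCA hypothesis) and that everything above $h$ is flat enough to give condition 3 with only a $2m\gamma$ overhead.
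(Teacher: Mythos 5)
Your proof is correct and takes essentially the same route as the paper: choose $h$ to be the largest gap index (or $0$), telescope the no-gap ratios to verify the well-conditioning condition, and for $h \ge 1$ apply the $h$-cPCA hypothesis together with two Pythagoras flips and the containment $\linspan(\mmu_{[i+1,i+h]}) \subseteq \linspan(\mmu_{[i+1,i+m]})$ to deduce the head bound for $\mw_m$. The only cosmetic difference is that you route the final transfer through the projection ordering $\mw_m\mw_m^\top \succeq \mw_h\mw_h^\top$ and Fact~\ref{fact:matrix-inner-prodct}, whereas the paper appeals directly to span inclusion; and you (correctly) flag that the $1+2m\gamma$ bound needs $m\gamma$ to be small, which the paper leaves implicit.
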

\begin{proof}
Let $h \in [1, m - 1]$ be maximal such that $\lam_{h + 1}(\tmm_i) \le (1 - \gamma)\lam_{h}(\tmm_i)$; if no such index exists, then we set $h = 0$. Then, by definition the first condition in \eqref{eq:head_conditions} is satisfied for this value $h$ if $h \neq 0$, and otherwise it is irrelevant. Further, since $h$ is the largest such index, $\lam_{j + 1}(\tmm_i) > (1 - \gamma)\lam_{j}(\tmm_i)$ for all $j \in [h + 1, m - 1]$, so telescoping across all such $j$, we have the third claim in \eqref{eq:head_conditions}:
\[\frac{\lam_{h + 1}(\tmm_i)}{\lam_m(\tmm_i)} \le \Par{\frac 1 {1- \gamma}}^{m - h - 1} \le 1 + 2m\gamma.\]
This holds even if $h = 0$, and the second claim in \eqref{eq:head_conditions} is irrelevant in this case, so we assume $h \ge 1$ henceforth. For notational convenience, let $\tml_{i, h}  \defeq \mv^{\ge \lam_h(\tmm_i)}(\tmm_i) \in \R^{d \times h}$. 
The definition of $h$ implies $
\tml_{i, h} = [\mv^{\le (1 - \gamma)\lam_{h}(\tmm_i)}(\tmm_i)]_\perp$, so \eqref{eq:prefix_plus} with $m \gets h$ yields
\begin{align*}
\normf{\mmu_{[i + 1, i + h]}^\top \mv^{\le (1 - \gamma)\lam_{h}(\tmm_i)}(\tmm_i)}^2 \le \delta &\implies \normf{\mmu_{[i + 1, i + h]}^\top\tml_{i, h}}^2 \ge h - \delta \\
&\implies \normf{\tml_{i, h}^\top \Brack{\mmu_{[i + 1, i + h]}}_{\perp}}^2 \leq \delta \\
&\implies \normf{\tml_{i, h}^\top \Brack{\mmu_{[i + 1, i + m]}}_{\perp}}^2 \leq \delta.
\end{align*}
In the last claim, we used that $\linspan(\mmu_{[i + 1, i + h]}) \supset \linspan(\mmu_{[i + 1, i + m]})$.
\end{proof}

Finally, we apply Lemmas~\ref{lem:nogap_merge_one} and~\ref{lem:no_gap_head_guarantee} to complete our analysis of Algorithm~\ref{alg:bbpca}.

\begin{lemma} \label{lem:nogap_merge_two}
Let $k \in [d]$ and $\kappa_k(\mm) \le 2$. Let $\mmu \in \R^{d \times k}$ be the output of $\BBPCA\bb{\mm, k, \orcpca}$ (Algorithm \ref{alg:bbpca}) where $\orcpca$ is a $(\delta, \gamma)$-$1$-cPCA oracle. Let
\[\Delta \defeq \Par{132k^2}^{\lceil \log_2(k) \rceil}\delta,\; \Gamma \defeq 2^{\lceil \log_2(k) \rceil}\gamma.\]
Assume $\Delta \le \frac{\Gamma^2}{288}$.
Then $\mmu$ is a $(\Delta, \Gamma)$-$k$-cPCA of $\mm$.
\end{lemma}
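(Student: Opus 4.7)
\textbf{Proof proposal for Lemma~\ref{lem:nogap_merge_two}.} The plan is a dyadic merge mirroring the proof of Lemma~\ref{lem:gap_comp}, but substituting Lemma~\ref{lem:nogap_merge_one} for Lemma~\ref{lem:gap_merge_two} at each internal node and supplying the head guarantee required by Lemma~\ref{lem:nogap_merge_one} through Lemma~\ref{lem:no_gap_head_guarantee}. Set $L \defeq \lceil \log_2 k \rceil$, and for $0 \le \ell \le L$ define $\delta_\ell \defeq (132k^2)^{L - \ell}\delta$ and $\gamma_\ell \defeq 2^{L-\ell}\gamma$, so that $\delta_0 = \Delta$ and $\gamma_0 = \Gamma$.

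The induction, from leaves ($\ell = L$) up to the root ($\ell = 0$), establishes the \emph{prefix-strengthened} claim $(\star)_\ell$: for every dyadic starting position $a \in \{0, 2^{L-\ell}, 2\cdot 2^{L-\ell}, \ldots\}$ and every $h \in [2^{L - \ell}]$ with $a + h \le k$, the block $\mmu_{[a+1, a+h]}$ is a $(\delta_\ell, \gamma_\ell)$-$h$-cPCA of the residual $\tmm_a \defeq (\id_d - \mmu_{[1,a]}\mmu_{[1,a]}^\top)\mm(\id_d - \mmu_{[1,a]}\mmu_{[1,a]}^\top)$. Carrying the full family of prefixes (rather than just the top-level block guarantee) is essential, because Lemma~\ref{lem:no_gap_head_guarantee} demands precisely this family to produce a head guarantee in the gap-free regime. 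The base case $(\star)_L$ is immediate from the $(\delta,\gamma)$-$1$-cPCA guarantee on each single oracle call.

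For the inductive step $(\star)_\ell \Rightarrow (\star)_{\ell - 1}$, fix a size-$2^{L-\ell+1}$ block starting at $a$, split it into two halves $B_1, B_2$ each of size $k_1 \defeq 2^{L-\ell}$, and fix a target prefix length $h \in [2^{L-\ell+1}]$ with $a + h \le k$. If $h \le k_1$, the conclusion is inherited directly from $(\star)_\ell$ since $\delta_\ell \le \delta_{\ell-1}$ and $\gamma_\ell \le \gamma_{\ell-1}$. Otherwise write $h = k_1 + h'$ and invoke Lemma~\ref{lem:nogap_merge_one} on $\mmu_{B_1}$ (a $(\delta_\ell,\gamma_\ell)$-$k_1$-cPCA of $\tmm_a$) together with $\mmu_{[a+k_1+1,\, a+k_1+h']}$ (a $(\delta_\ell,\gamma_\ell)$-$h'$-cPCA of $\tmm_{a+k_1}$, also from $(\star)_\ell$). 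The head guarantee on $\mmu_{B_1}$ that Lemma~\ref{lem:nogap_merge_one} requires is furnished by Lemma~\ref{lem:no_gap_head_guarantee} applied at position $a$ with $m = k_1$; its hypothesis is exactly the family of prefix-cPCA guarantees supplied by $(\star)_\ell$. The merge then outputs a cPCA with parameters $(130k_1^2 \delta_\ell + 2\delta_\ell,\; 2\gamma_\ell)$, which is at most $(\delta_{\ell-1}, \gamma_{\ell-1})$ since $130 k_1^2 + 2 \le 132 k^2$, closing the induction.

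Three side conditions must be verified at every merge. First, Lemma~\ref{lem:nogap_merge_one} requires $\delta_\ell \le \gamma_\ell^2 / 288$, which reduces to $(33k^2)^{L - \ell}\delta \le \gamma^2/288$; the worst case is $\ell = 0$, which is precisely the hypothesis $\Delta \le \Gamma^2/288$. Second, the condition $\kappa_h(\tmm_a) \le 2$ follows from Cauchy's interlacing (Fact~\ref{fact:cauchy_interlacing}): for $a + h \le k$ we have $\lam_h(\tmm_a) \ge \lam_{a+h}(\mm) \ge \lam_k(\mm)$ and $\lam_1(\tmm_a) \le \lam_1(\mm)$, so $\kappa_h(\tmm_a) \le \kappa_k(\mm) \le 2$. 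Third, $\gamma_1 \le \gamma_2$ holds as an equality since both inputs share level $\ell$. Taking $\ell = 0$ with $a = 0$ gives that $\mmu = \mmu_{[1,k]}$ is a $(\Delta, \Gamma)$-$k$-cPCA of $\mm$. The main obstacle, and the source of the quasipolynomial $k^{\Theta(\log k)}$ blowup, is maintaining the prefix-strengthened hypothesis $(\star)_\ell$ at every level so that head guarantees can be extracted at each merge without any spectral gap assumption.
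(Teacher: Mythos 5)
Your proof is correct and follows essentially the same route as the paper's: the same dyadic merge tree, the same $(\delta_\ell, \gamma_\ell)$ parameter schedule, and crucially the same prefix-strengthened induction hypothesis (your $(\star)_\ell$ is precisely the paper's claim about $\mw_{\ell,i}^{(h)}$ for all $h \in [m_{i,\ell}]$), which is what feeds Lemma~\ref{lem:no_gap_head_guarantee} so that Lemma~\ref{lem:nogap_merge_one} can be applied at each internal node. Your verification of the side conditions (the $\delta_\ell \le \gamma_\ell^2/288$ check, the $\kappa$ bound on residuals via Fact~\ref{fact:cauchy_interlacing}, and inheritance of the cPCA guarantee when $h \le k_1$) matches the paper's reasoning; nothing is missing.
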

\begin{proof}

We divide $[k]$ into dyadic intervals as in Lemma \ref{lem:gap_comp}, using $L + 1$ layers for $L \defeq \lceil\log_2(k)\rceil$, labeled $0 \le \ell \le L$. In particular, the $0^\text{th}$ layer is $S_{0, 1} \defeq [k]$, the first layer consists of the intervals $S_{1, 0} \defeq [2^{L - 1}]$ and $S_{1, 1} \defeq [k] \setminus S_{1, 0}$, and so on. For each $0\le \ell \le L$ and $0 \le i < 2^{\ell}$, we let
\begin{gather*}S_{\ell, i} \defeq \Brace{j \in [k] \mid i 2^{L - \ell} + 1 \le j \le (i + 1)2^{L - \ell}}.
\end{gather*}
This dyadic splitting again induces a binary tree, where singletons in $[k]$ are leaves and the root is $[k]$. The node in the tree associated with $S_{\ell, i} \subseteq [k]$ captures the interval $[i2^{L - \ell} + 1, \min((i + 1)2^{L - \ell}, k)] \subseteq [k]$, and (following notation in Lemma~\ref{lem:no_gap_head_guarantee}) we associate it with 
\[\mw_{i, \ell} \defeq \mmu_{[i2^{L - \ell} + 1, \min((i + 1)2^{L - \ell}, k)]}.\] 
Moreover, we define $m_{i, \ell} \defeq \dim(\linspan(\mw_{i, \ell}))$, and for all $h \in [m_{i, \ell}]$, we let
\[\mw_{i, \ell}^{(h)} \defeq \mmu_{[i2^{L - \ell} + 1, i2^{L - \ell} + h]},\]
i.e.\ $\mw_{i, \ell}^{(h)}$ is the first $h$ columns of $\mw_{i, \ell}$. We inductively claim that for all $0 \le \ell \le L$, $\mw_{\ell, i}^{(h)}$ is a $(\delta_\ell, \gamma_\ell)$-approximate $h$-cPCA of $\mproj_{i2^{L - \ell}}\mm\mproj_{i2^{L - \ell}}$, for all $0 \le i < 2^\ell$, and all $h \in [m_{i, \ell}]$, where
\[\delta_\ell \defeq \Par{132k^2}^{L - \ell}\delta,\; \gamma_\ell \defeq 2^{L - \ell}\gamma.\]
The base case $\ell = L$ follows by assumption. Next, suppose the claim above holds for some $0 \le \ell < L$, and all $0 \le i < 2^\ell$. Consider some $\mw_{\ell - 1, i}$, the composition of $\mw_{\ell, 2i}$ and $\mw_{\ell, 2i + 1}$. If $\mw_{\ell, 2i + 1}$ is empty, then all $\mw_{\ell - 1, i}^{(h)}$ are $(\delta_{\ell - 1}, \gamma_{\ell - 1})$-$h$-cPCAs of $\mproj_{i2^{L - \ell + 1}}\mm\mproj_{i2^{L - \ell + 1}}$ by the inductive assumption. 

Otherwise, by induction, \eqref{eq:prefix_plus} holds with $i \gets i2^{L - \ell + 1}$ and $m \gets 2^{L - \ell}$, for $\delta \gets \delta_\ell$, $\gamma \gets \gamma_\ell$. So, we can apply Lemma~\ref{lem:no_gap_head_guarantee} to obtain $h \in [0, 2^{L - \ell}-1]$ satisfying the head guarantee needed by Lemma~\ref{lem:nogap_merge_one}. We then use Lemma~\ref{lem:nogap_merge_one} with $\mm \gets \mproj_{i2^{L - \ell + 1}}\mm\mproj_{i2^{L - \ell + 1}}$ to analyze our composition. In particular, $\kappa_{k'}(\mproj_{i2^{L - \ell + 1}}\mm\mproj_{i2^{L - \ell + 1}}) \le 2$ where $k' \defeq \min(2^{L - \ell + 1}, k - i2^{L - \ell + 1})$, by Fact~\ref{fact:cauchy_interlacing} and $\kappa_k(\mm) \le 2$. Moreover, $\delta_\ell$ grows faster than $\gamma_\ell^2$, so the remaining requirement in Lemma~\ref{lem:nogap_merge_one} is satisfied by our choices of $\Delta, \Gamma$. Finally, Lemma~\ref{lem:nogap_merge_one} proves that $\mw_{i, \ell - 1}$ is a $(\delta_{\ell - 1}, \gamma_{\ell - 1})$-$m_{i, \ell-1}$-cPCA of $\mproj_{i2^{L - \ell + 1}}\mm\mproj_{i2^{L - \ell + 1}}$. It is straightforward to check the same argument we used also shows that $\mw_{i, \ell - 1}^{(h)}$ is a $(\delta_{\ell - 1}, \gamma_{\ell - 1})$-$h$-cPCA of $\mproj_{i2^{L - \ell + 1}}\mm\mproj_{i2^{L - \ell + 1}}$ for all $h \in [m_{i, \ell - 1}]$, since we can truncate the composition early. This completes the induction, and the conclusion follows by taking $\ell \gets 0$.
\end{proof}

\subsubsection{Putting it all together}
\label{subsubsection:putting_it_all_together}
In this section, we finally combine the results from the previous two sections, and provide a generic analysis on the parameter degradation of $\BBPCA$'s guarantees when using $1$-cPCA oracles. 

\restatecpcathm*
\begin{proof}
For convenience in this proof, let
\begin{align*}
\bDelta \defeq \frac{\Delta}{640 k^2},\; \Delta' \defeq \frac{\bDelta}{18k^2},\; \delta \defeq \frac{\Delta'}{\Par{132k^2}^{\lceil \log_2(k)\rceil}} , \;
\bGamma \defeq \frac{\Gamma}{10k},\; \Gamma' \defeq \frac{\bGamma}{2k},\; \gamma \defeq \frac {\Gamma'} {2^{\lceil\log_2(k)\rceil}}.
\end{align*}
Also, let $\gapind$ be the set of indices $i \in [\min(k, d - 1)]$ such that $\lam_{i + 1}(\mm) < (1 - \bGamma)\lam_i(\mm)$, and let the elements of $\gapind$ be denoted $\{K_j\}_{j \in [r - 1]}$. Let $K_0 \defeq 0$ and $K_r \defeq k$, and define $k_j \defeq K_j - K_{j - 1}$ for all $j \in [r]$. Following the notation of $\oopca$, we let $\mmu_j \defeq \{u_i\}_{i \in [K_{j - 1} + 1, K_j]} \in \R^{d \times k_i}$, and we iteratively define $\mproj^{(0)} \defeq \id_d$, $\mproj^{(j)} \defeq \mproj^{(j - 1)} - \mmu_j \mmu_j^\top$ for all $j \in [r - 1]$. We claim inductively that for all $j \in [r]$, $\mmu_j$ is a $(\Delta', \Gamma')$-approximate $k_j$-cPCA of $\mproj^{(j - 1)}\mm \mproj^{(j - 1)}$. Because $\bGamma \le \frac 1 {10k}$, we have
\[\frac{\lam_{K_j}(\mm)}{\lam_{K_{j - 1} + 1}(\mm)} = \prod_{i = K_{j - 1} + 1}^{K_j - 1} \frac{\lam_{i + 1}(\mm)}{\lam_i(\mm)} > \Par{1 - \bGamma}^k \ge \frac 2 3.\]
It is also straightforward to check that $\bDelta, \bGamma$ satisfy the requirements of Lemma~\ref{lem:gap_comp}. Therefore, the inductive assumption and Lemma~\ref{lem:well_conditioned_pieces} show that $\kappa_{k_j}(\mproj^{(j - 1)}\mm\mproj^{(j - 1)}) \le 2$. By applying Lemma~\ref{lem:nogap_merge_two} and $\Delta' \le\frac 1 {288} \cdot (\Gamma')^2$, we hence have completed our inductive argument. Finally, applying Lemma~\ref{lem:gap_comp} shows that $\mmu$ is a $(\bDelta, \bGamma)$-$k$-cPCA, so it is clearly also a $(\Delta, \Gamma)$-cPCA as claimed.
\end{proof}
\section{Applications}\label{sec:applications}

In this section, we give applications of our main results, Theorems~\ref{thm:epca_reduce} and~\ref{thm:final_composition}, to the design of $k$-PCA algorithms for statistical problems. In Section~\ref{ssec:app_prelims}, we begin by proving several preliminary results which will be used throughout our applications. We provide our results on robust $k$-PCA under various distributional assumptions, Theorems~\ref{thm:robust_subg} and~\ref{thm:robust-k-epca-heavy-tailed}, in Section~\ref{ssec:robust}. Finally, in Section~\ref{ssec:htoja}, we develop a simple $k$-cPCA algorithm in a heavy-tailed online setting in Theorem~\ref{thm:oja_k}.

\subsection{Preliminaries}\label{ssec:app_prelims}

In this section, we establish some results about the clipping of sampled data points that will be used to handle heavy tails in subsequent applications. We first define the standard notion of  $\bb{p,C_{p}}$-hypercontractivity (see, for example,  \cite{Mendelson2018RobustCE} where it was used in a similar context).

\begin{definition}[$\bb{p,C_{p}}$-hypercontractivity]
\label{def:hypercontractivity}
Let $p \ge 4$ be a positive even integer. A distribution $\mathcal{D}$ over $\R^d$ is said to be \emph{$\bb{p,C_{p}}$-hypercontractive} if for all $v \in \mathbb{R}^d$,
\bas{
\E_{X \sim \mathcal{D}} \bbb{\inprod{X-\E\bbb{X}}{v}^{p}}^{\frac{1}{p}} \le C_{p}\E_{X \sim \mathcal{D}}\bbb{\inprod{X-\E\bbb{X}}{v}^{2}}^{\half}.
}
\end{definition}
We observe that Definition~\ref{def:hypercontractivity} is compatible with assuming $\dist$ is centered and symmetric, with no loss in parameters. Namely, let $\{X_i\}_{i \in [n]} \sim \mathcal{D}$ with covariance $\E\Bracks{\bb{X-\E{X}}\bb{X-\E{X}}^{\top}} = \mathbf{\Sigma}$. If $\dist$ is not symmetric or centered, we can consider the $\frac n 2$ independent variables 
\[Y_{i} := \frac{X_{2i-1}-X_{2i}}{\sqrt{2}},\; i \in \Brack{1,\left\lfloor \frac{n}{2} \right\rfloor}.\] 
Indeed, if $X$ and $X'$ are independent with probability distribution $\mathcal{D}$ and $Y := (X-X')/\sqrt{2}$, then $\E Y = \0_d, \E\bbb{YY^\top} = \msig$, $Y$ is symmetric about $\0_d$, and
\bas{
\E_{X,X' \sim \mathcal{D}}\bbb{\inner{Y}{v}^p}^{\frac{1}{p}} &= \frac{1}{\sqrt{2}} \E_{X,X' \sim \mathcal{D}}\bbb{\bb{\inner{X-\E X}{v}-\inner{X'-\E X'}{v}}^p}^{\frac{1}{p}} \\
&\le \frac{1}{\sqrt{2}} \E_{X,X' \sim \mathcal{D}}\bbb{2^{p-1}\bb{\inner{X-\E X}{v}^p + \inner{X'-\E X'}{v}^p}}^{\frac{1}{p}} \\
&= \sqrt{2} \E_{X \sim \mathcal{D}}\bbb{{\inner{X-\E X}{v}^p}}^{\frac{1}{p}} \le C_p\E_{X \sim \mathcal{D}}\bbb{2{\inner{X-\E X}{v}^2}}^{\frac{1}{2}} \\
&= C_p\E_{X,X' \sim \mathcal{D}}\bbb{{\inner{X-X'}{v}^2}}^{\frac{1}{2}},
}
where the inequality follows from Fact \ref{fact:generalized_mean}. So, $Y$ also has a $(p,C_p)$-hypercontractive distribution. Therefore, in applications for hypercontractive $\dist$, we assume without loss that $\dist$ is centered and symmetric. We next define a clipping operation relevant to our heavy-tailed applications.

\begin{definition}[Clipping]\label{def:clipping}
Let $R \in \R_{> 0}$. The $R$-clipping function $\mathcal{T}_R: \R^d \to \R^d$ is defined as 
\bas{
\mathcal{T}_R(x) := \min\bb{1, \frac{\sqrt{R}}{\norm{x}_{2}}}v
}
for all $x \in \R^d$.
Given distribution $\mathcal{D}$ over $\R^d$, the $R$-clipped distribution of $\mathcal{D}$ is defined as $\mathcal{T}_R(\mathcal{D})$.
\end{definition}
We will often use $\mathcal{T}$ for the $R$-clipping function if the clipping radius $R$ is obvious from context. In the next result, we analyze the bias due to the clipping function.
\begin{lemma}[Bias of clipping]\label{lemma:clippingclose}
Let $\mathcal{D}$ be $(p,C_{p})$-hypercontractive over $\mathbb{R}^{d}$ with covariance $\mathbf{\Sigma}$. 
\begin{enumerate}
    \item For all $u \in \R^d$, $u^{\top}\Pars{\mathbf{\Sigma} - \E_{x \sim \mathcal{D}}\Bracks{\mathcal{T}_{R}\bb{x}\mathcal{T}_{R}\bb{x}^{\top}}}u  \leq C_{p}^{p}\bb{u^{\top}\mathbf{\Sigma}u}\Pars{\frac{\Tr\bb{\mathbf{\Sigma}}}{R}}^{\frac{p}{2}-1} $.
    \item For $x \sim \mathcal{D}$, $\Pr\Pars{\norm{x}_{2} \geq \sqrt{R}} \leq \Pars{C_{p}(\frac{\Tr\bb{\mathbf{\Sigma}}}{R})^{\half}}^{\bb{p-2}}$.
\end{enumerate}
\end{lemma}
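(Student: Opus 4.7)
The plan is to prove each part directly from hypercontractivity via moment estimates.

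\textbf{Part (1):} I first observe that the clipping operator acts as $\mathcal{T}_R(x) = c(x)\,x$ for $c(x) \defeq \min(1, \sqrt{R}/\norm{x}_2) \in [0,1]$, which gives the pointwise identity $(u^\top x)^2 - (u^\top \mathcal{T}_R(x))^2 = (1 - c(x)^2)(u^\top x)^2 \le (u^\top x)^2 \1\{\norm{x}_2 > \sqrt{R}\}$. Taking expectations and applying H\"older's inequality with conjugate exponents $p/2$ and $p/(p-2)$,
\[
u^\top \bb{\msig - \E\bbb{\mathcal{T}_R(x)\mathcal{T}_R(x)^\top}} u \;\le\; \E\bbb{(u^\top x)^p}^{2/p} \cdot \Pr\bb{\norm{x}_2 > \sqrt{R}}^{(p-2)/p}.
\]
Hypercontractivity bounds the first factor by $C_p^2\,u^\top\msig u$. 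For the second factor I would use a \emph{stronger} tail bound than part (2) provides, namely $\Pr(\norm{x}_2 > \sqrt{R}) \le C_p^p(\Tr(\msig)/R)^{p/2}$ (derived below). Raising to the $(p-2)/p$-th power and multiplying then produces exactly the claimed $C_p^p(u^\top\msig u)(\Tr(\msig)/R)^{p/2 - 1}$.

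\textbf{Part (2):} I would apply Markov to $\norm{x}_2^{p-2}$, reducing the claim to the moment bound $\E[\norm{x}_2^{p-2}] \le C_p^{p-2}\Tr(\msig)^{(p-2)/2}$. To prove this, diagonalize $\msig = \sum_i \lambda_i v_i v_i^\top$ and let $\alpha_i \defeq \inprod{x}{v_i}$, so hypercontractivity yields $\E[\alpha_i^p] \le C_p^p\lambda_i^{p/2}$. Since $p$ is an even integer with $p \ge 4$, $(p-2)/2$ is a positive integer and the multinomial theorem gives
\[
\E\bbb{\norm{x}_2^{p-2}} \;=\; \sum_{k_1 + \cdots + k_d = (p-2)/2} \binom{(p-2)/2}{k_1,\ldots,k_d}\,\E\bbb{\prod_i \alpha_i^{2k_i}}.
\]
Generalized H\"older with exponents $r_i \defeq (p-2)/(2k_i)$ (which satisfy $\sum_i 1/r_i = 1$) bounds each mixed moment by $\prod_i \E[\alpha_i^{p-2}]^{2k_i/(p-2)}$, and Lyapunov's inequality lifts the hypercontractive bound from moment $p$ down to moment $p-2$: $\E[\alpha_i^{p-2}]^{1/(p-2)} \le \E[\alpha_i^p]^{1/p} \le C_p\sqrt{\lambda_i}$. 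Collecting exponents yields $\E[\prod_i \alpha_i^{2k_i}] \le C_p^{p-2}\prod_i \lambda_i^{k_i}$, and summing the multinomial back up gives $C_p^{p-2}\bb{\sum_i \lambda_i}^{(p-2)/2} = C_p^{p-2}\Tr(\msig)^{(p-2)/2}$, as required.

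The same multinomial argument executed at exponent $p$ instead of $p-2$ yields $\E[\norm{x}_2^p] \le C_p^p\Tr(\msig)^{p/2}$; combining with Markov on $\norm{x}_2^p$ then supplies the stronger tail bound invoked in part (1). The one bookkeeping subtlety is precisely this step: pairing the H\"older exponents $p/2,\,p/(p-2)$ against the \emph{weaker} part-(2) tail bound would yield the wrong final exponent $(p-2)^2/(2p)$ on $\Tr(\msig)/R$ rather than the needed $p/2 - 1$, so the tighter moment-$p$ Markov inequality (not merely the moment-$(p-2)$ one used for part (2)) is essential for closing the calculation in part (1).
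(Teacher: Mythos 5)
Your proof is correct and follows the same core route as the paper for part (1): the pointwise identity for the clipping defect, H\"older with exponents $\frac p 2$ and $\frac{p}{p-2}$, hypercontractivity for the $\E[\inprod{u}{x}^p]^{2/p}$ factor, Markov applied to $\norm{x}_2^p$, and the multinomial-plus-H\"older argument to bound $\E[\norm{x}_2^p] \le C_p^p \Tr(\msig)^{p/2}$. Where you diverge is part (2). The paper also uses Markov on $\norm{x}_2^p$, which yields $\Pr(\norm{x}_2 \ge \sqrt R) \le C_p^p(\Tr(\msig)/R)^{p/2}$; this implies the stated bound $C_p^{p-2}(\Tr(\msig)/R)^{(p-2)/2}$ only after observing that the latter is either $\ge 1$ (so trivial) or, if $<1$, dominates the former --- an implicit case split that the paper does not spell out. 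You instead apply Markov directly to $\norm{x}_2^{p-2}$, at the cost of a Lyapunov step $\E[\alpha_i^{p-2}]^{1/(p-2)} \le \E[\alpha_i^p]^{1/p} \le C_p\sqrt{\lambda_i}$ (valid since $p$ is even and $p-2 \ge 2$) before running the multinomial argument at exponent $\frac{p-2}{2}$ rather than $\frac p 2$. Your version produces the stated bound for part (2) exactly, with no case split; the paper's is slightly shorter since it reuses $\E[\norm{x}_2^p]$ for both parts. Your closing remark about why part (1) needs the $\E[\norm{x}_2^p]$-Markov bound (rather than the part-(2) tail bound) is a correct and useful bookkeeping observation: using the weaker bound would give exponent $\frac{(p-2)^2}{2p}$ instead of $\frac{p-2}{2}$ on $\Tr(\msig)/R$.
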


\begin{proof}
    For $x \sim \mathcal{D}$ and any unit vector $u \in \mathbb{R}^{d}$ we have, 
    \ba{
        \E\bbb{\inprod{u}{x}^{2} - \inprod{u}{\mathcal{T}_{R}\bb{x}}^{2}} &= \E\bbb{\inprod{u}{x}^{2}\bb{1-\frac{R}{\norm{x}_{2}^{2} }}\mathbbm{1}\bb{\norm{x}_{2} \geq \sqrt{R}}} \notag \\
        &\leq \E\bbb{\inprod{u}{x}^{p}}^{\frac 2 p} \E\bbb{\bb{\mathbbm{1}\bb{\norm{x}_{2} \geq \sqrt{R}}}^{\frac p {p - 2}}}^{\frac{p-2}{p}}, \text{using Holder's inequality} \notag \\
        &\leq C_{p}^{2}\E\bbb{\inprod{u}{x}^{2}} \E\bbb{\mathbbm{1}\bb{\norm{x}_{2} \geq \sqrt{R}}}^{\frac{p-2}{p}}, \text{using } (p,C_{p})\text{-Hypercontractivity} \notag \\
        &\leq C_{p}^{2}\E\bbb{\inprod{u}{x}^{2}}\Pr\bb{\norm{x}_{2} \geq \sqrt{R}}^{\frac{p-2}{p}} \notag \\
        &\leq C_{p}^{2}\E\bbb{\inprod{u}{x}^{2}}\bb{\frac{\E\bbb{\norm{x}_{2}^{p}}}{R^{\frac{p}{2}}}}^{\frac{p-2}{p}}, \text{ using Markov's inequality} \label{eq:operator_norm_deviation}
    }
    Denote $r:= \frac{p}{2}$ for convenience of notation. Then, 
    \bas{
        \E\bbb{\norm{x}_{2}^{p}} &= \E\bbb{\bb{x^{\top}x}^{r}} = \E\bbb{\bb{\sum_{i \in [d]}x_{i}^{2}}^{r}} \\
        &= \sum_{j_{1}, j_{2}, \ldots j_{r} \in [d]}\E\bbb{x_{j_{1}}^{2}x_{j_{2}}^{2}\ldots x_{j_{r}}^{2}} \\
        &\leq \sum_{j_{1}, j_{2}, \ldots j_{r} \in [d]}\E\bbb{x_{j_{1}}^{2r}}^{\frac{1}{r}}\E\bbb{x_{j_{2}}^{2r}}^{\frac{1}{r}}\ldots \E\bbb{x_{j_{r}}^{2r}}^{\frac{1}{r}}, \text{ using Holder's inequality} \\
        &= \sum_{j_{1}, j_{2}, \ldots j_{r} \in [d]}\E\bbb{\inprod{x}{e_{j_{1}}}^{2r}}^{\frac{1}{r}}\E\bbb{\inprod{x}{e_{j_{2}}}^{2r}}^{\frac{1}{r}}\ldots \E\bbb{\inprod{x}{e_{j_{r}}}^{2r}}^{\frac{1}{r}} \\
        &\leq C_{p}^{2r}\sum_{j_{1}, j_{2}, \ldots j_{r} \in [d]}\E\bbb{\inprod{x}{e_{j_{1}}}^{2}}\E\bbb{\inprod{x}{e_{j_{2}}}^{2}}\ldots \E\bbb{\inprod{x}{e_{j_{r}}}^{2}}, \text{using } (p,C_{p})\text{-hypercontractivity} \\
        &= C_{p}^{2r}\sum_{j_{1}, j_{2}, \ldots j_{r} \in [d]}\bb{e_{j_{1}}^{\top}\mathbf{\Sigma} e_{j_{1}}}\bb{e_{j_{2}}^{\top}\mathbf{\Sigma} e_{j_{2}}}\ldots \bb{e_{j_{r}}^{\top}\mathbf{\Sigma} e_{j_{r}}} \\
        &= C_{p}^{2r}\bb{\sum_{i\in[d]}e_{i}^{\top}\mathbf{\Sigma} e_{i}}^{r} = C_{p}^{p}\Tr\bb{\mathbf{\Sigma}}^{\frac{p}{2}}.
    }
    Substituting this into~\eqref{eq:operator_norm_deviation} we have proven the second claim, and
    \bas{
        \E\bbb{\inprod{u}{x}^{2} - \inprod{u}{\mathcal{T}_{R}\bb{x}}^{2}} &\leq C_{p}^{2}\E\bbb{\inprod{u}{x}^{2}}\bb{\frac{C_{p}^{p}\Tr\bb{\mathbf{\Sigma}}^{\frac{p}{2}}}{R^{\frac{p}{2}}}}^{\frac{p-2}{p}} = C_{p}^{p}\E\bbb{\inprod{u}{x}^{2}}\bb{\frac{\Tr\bb{\mathbf{\Sigma}}}{R}}^{\frac{p}{2}-1}
    }
    which completes our proof of the first claim as well.
\end{proof}
As an immediate consequence of Lemma~\ref{lemma:clippingclose} we have the following corollary.
\begin{corollary}
\label{cor:clipping-and-truncation}
In the setting of Lemma \ref{lemma:clippingclose}, for any $\rho \in (0,\half)$ and $R\geq \Pars{\frac{C_{p}^{p}}{\rho}}^{\frac{2}{p-2}}\Tr\bb{\mathbf{\Sigma}}$, 
we have 
\begin{enumerate}
    \item $\normsop{\E_{x \sim \mathcal{D}}\Bracks{\mathcal{T}_{R}\bb{x}\mathcal{T}_{R}\bb{x}^{\top}} - \mathbf{\Sigma}} \leq \rho\normop{\msig}$. \label{item:opnormbound}
    \item $\Pr_{x \sim \mathcal{D}}\Pars{\norm{x}_{2} \geq \sqrt{R}} \leq \Pars{\frac{\rho}{C_{p}^{2}}}^{\frac{p}{p-2}}$. \label{item:bigxbound}
    \item For all $v \in \mathbb{R}^{d}$, $\E_{x \sim \mathcal{D}} \Bracks{\inprod{\mathcal{T}_{R}\bb{x}}{v}^{p}}^{\frac{1}{p}} \le 2C_{p}\E_{x \sim \mathcal{D}}\Bracks{\inprod{\mathcal{T}_{R}\bb{x}}{v}^{2}}^{\half}$.\label{item:still_hyperc}
\end{enumerate}
\begin{proof}
    Items~\ref{item:opnormbound} and~\ref{item:bigxbound} follow directly from substituting the value of $R$ in Lemma~\ref{lemma:clippingclose}. We next prove Item~\ref{item:still_hyperc}. 
     For $x \sim \mathcal{D}$ we upper bound the $p^{\text{th}}$ moment of one-dimensional projections:
     \ba{
         \E\bbb{\inprod{\mathcal{T}_{R}\bb{x}}{v}^{p}} &= \E\bbb{\inprod{x}{v}^{p}\bb{ \mathbbm{1}\bb{\norm{X}_{2} \leq \sqrt{R}} + \bb{\frac{\sqrt{R}}{\norm{x}_{2}}}^{p}\mathbbm{1}\bb{\norm{X}_{2} \geq \sqrt{R}} }} \notag \\
         &\leq 2\E\bbb{\inprod{x}{v}^{p}} \notag \\
         &\leq 2C_{p}^{p}\E\bbb{\inprod{x}{v}^{2}}^{\frac{p}{2}}, \text{ using } (p,C_{p})\text{-hypercontractivity of } X \label{eq:truncated_hypercontractivity_1}
     }
    We now obtain lower bounds on the second moments of one-dimensional projections:
     \ba{
        \E\bbb{\inprod{x}{v}^{2} - \inprod{\mathcal{T}_{R}\bb{x}}{v}^{2}} &\leq C_{p}^{p}\bb{u^{\top}\mathbf{\Sigma}u}\bb{\frac{\Tr\bb{\mathbf{\Sigma}}}{R}}^{\frac{p}{2}-1} \text{ using Lemma } \ref{lemma:clippingclose},\notag \\ 
        &\leq C_{p}^{2}\E\bbb{\inprod{X}{v}^{2}}\bb{\bb{\frac{\rho}{C_{p}^{2}}}^{\frac{p}{p-2}}}^{\frac{p - 2}{p}}= \rho \E\bbb{\inprod{X}{v}^{2}}.\notag
    }
    Substituting this into~\eqref{eq:truncated_hypercontractivity_1} completes our proof:
    \bas{
        \E\bbb{\inprod{\mathcal{T}_{R}\bb{x}}{v}^{p}} \leq 2C_{p}^{p}\bb{\frac{\E\bbb{\inprod{x}{v}^{2}}}{1-\rho}}^{\frac{p}{2}} \leq \bb{2C_{p}}^{p}\E\bbb{\inprod{x}{v}^{2}}^{\frac{p}{2}}\,.
    }
\end{proof}

\end{corollary}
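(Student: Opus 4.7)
The plan is to dispatch the three items of Corollary~\ref{cor:clipping-and-truncation} in order, with only the third requiring nontrivial work beyond invoking Lemma~\ref{lemma:clippingclose}.

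For Item~\ref{item:opnormbound}, I would first observe that $\mathcal{T}_R(x)\mathcal{T}_R(x)^\top \preceq xx^\top$ pointwise because $\mathcal{T}_R(x) = c(x)\cdot x$ for some $c(x) \in [0,1]$. Taking expectations yields $\E[\mathcal{T}_R(x)\mathcal{T}_R(x)^\top] \preceq \msig$, so $\msig - \E[\mathcal{T}_R(x)\mathcal{T}_R(x)^\top] \in \PSD^{d\times d}$, and its operator norm equals the supremum of its quadratic form over unit vectors. Now I apply Lemma~\ref{lemma:clippingclose}(1) and plug in the lower bound on $R$: since $\Tr(\msig)/R \le (\rho/C_p^p)^{2/(p-2)}$, we get $(\Tr(\msig)/R)^{p/2-1} \le \rho/C_p^p$, making the per-direction bound $C_p^p (u^\top \msig u)(\Tr(\msig)/R)^{p/2-1}$ at most $\rho (u^\top \msig u)$. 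Taking the supremum over unit $u$ yields $\rho \normop{\msig}$.

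For Item~\ref{item:bigxbound}, I simply substitute the chosen $R$ into Lemma~\ref{lemma:clippingclose}(2) and simplify the exponents: the bound $(C_p (\Tr(\msig)/R)^{1/2})^{p-2}$ becomes a clean power of $\rho/C_p^2$ once one collects the exponents of $C_p$ and $\rho$ carefully.

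For Item~\ref{item:still_hyperc}, the key observation is that clipping can only decrease one-dimensional $p$-th moments (since $|\inprods{\mathcal{T}_R(x)}{v}| \le |\inprods{x}{v}|$ pointwise), so
\[
\E\bigl[\inprod{\mathcal{T}_R(x)}{v}^p\bigr]^{1/p} \;\le\; \E\bigl[\inprod{x}{v}^p\bigr]^{1/p} \;\le\; C_p\, \E\bigl[\inprod{x}{v}^2\bigr]^{1/2},
\]
using $(p,C_p)$-hypercontractivity of $\dist$. I then convert the right-hand side's unclipped second moment back into a clipped second moment using Item~\ref{item:opnormbound} applied to the rank-one direction $v$: since $\E[\inprods{x}{v}^2] - \E[\inprods{\mathcal{T}_R(x)}{v}^2] \le \rho \E[\inprods{x}{v}^2]$ by the same quadratic-form argument as in Lemma~\ref{lemma:clippingclose}(1), we obtain $\E[\inprods{x}{v}^2] \le (1-\rho)^{-1}\E[\inprods{\mathcal{T}_R(x)}{v}^2] \le 2\E[\inprods{\mathcal{T}_R(x)}{v}^2]$ since $\rho < \tfrac12$. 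Combining, the clipped $p$-th moment is bounded by $C_p \sqrt{2}\, \E[\inprods{\mathcal{T}_R(x)}{v}^2]^{1/2} \le 2C_p \E[\inprods{\mathcal{T}_R(x)}{v}^2]^{1/2}$, which is the desired $(p, 2C_p)$-hypercontractivity.

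I do not expect any major obstacle; the main care needed is bookkeeping the exponents in Item~\ref{item:bigxbound} to arrive at exactly $(\rho/C_p^2)^{p/(p-2)}$, and ensuring in Item~\ref{item:still_hyperc} that the constant degradation from $C_p$ to $2C_p$ is tight (the factor $\sqrt{2}$ from the second-moment comparison, absorbed into $2$, is comfortably within budget).
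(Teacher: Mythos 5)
Your proposal is correct and follows essentially the same route as the paper: Items~\ref{item:opnormbound} and~\ref{item:bigxbound} by substitution into Lemma~\ref{lemma:clippingclose}, and Item~\ref{item:still_hyperc} by bounding the clipped $p^{\text{th}}$ moment by the unclipped one, invoking hypercontractivity of $\dist$, and then converting back to a clipped second moment via the bias bound with $\rho < \thalf$. Two small remarks. First, your observation that $\msig - \E[\mathcal{T}_R(x)\mathcal{T}_R(x)^\top] \succeq \mzero$ is a genuinely needed step (so that $\sup_u u^\top(\cdot)u$ equals the operator norm) that the paper's ``follow directly'' elides; good to make it explicit. Second, be aware that if you literally substitute into Lemma~\ref{lemma:clippingclose}(2) as printed (outer exponent $p-2$) you get $\Pr \le \rho/C_p^2$, not the stated $(\rho/C_p^2)^{p/(p-2)}$; tracing back through the Markov step in that lemma's proof shows the outer exponent should be $p$, which then does yield the stated bound. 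Your Item~\ref{item:still_hyperc} is in fact marginally cleaner than the paper's, since the pointwise inequality $|\inprods{\mathcal{T}_R(x)}{v}| \le |\inprods{x}{v}|$ gives $\E[\inprods{\mathcal{T}_R(x)}{v}^p] \le \E[\inprods{x}{v}^p]$ directly without the cosmetic factor of $2$ the paper inserts in \eqref{eq:truncated_hypercontractivity_1}.
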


\subsection{Robust PCA}\label{ssec:robust}

In this section, we develop algorithms for robust PCA. The robust PCA problem asks us to output an approximate PCA of a covariance matrix from independent draws from the inducing distribution, even after a fraction of draws are corrupted.
We define our contamination model formally below.

\begin{definition}[Strong contamination model]
\label{def:contaminaton-model}
Given a \emph{corruption parameter} $\eps \in (0,\half)$ 
and a distribution $\calP$, 
an algorithm obtains samples from $\calP$ with \emph{$\eps$-contamination} 
as follows.
\begin{enumerate}
    \item The algorithm specifies the number $n$ of samples it requires. 
    \item A set $S$ of $n$ i.i.d.\ samples from $\calP$ is drawn but not yet shown to the algorithm. 
    \item An arbitrarily powerful adversary then inspects $S$, 
before deciding to replace any subset of $\lceil \eps n \rceil$ 
samples with arbitrarily corrupted points (``outliers'') to obtain the contaminated set $T$, which is then returned to the algorithm.
\end{enumerate}
We say  $T$ is an \emph{$\epsilon$-corrupted version} of $S$ and a set of
\emph{$\epsilon$-corrupted samples} from $\calP$.
\end{definition}
Robust estimation typically requires that the inliers satisfy structural properties (that hold with high probability) that distinguishes them from harmful outliers. 
In the context of PCA, we use the following stability condition from \cite{JamLT20, DKPP23} to design our algorithms.
\begin{definition}[Stability]\label{def:stability}
For $\eps \in (0, \half)$ and $\gamma \geq \eps$, we say a distribution $G$ on $\R^d$
is \emph{$(\eps,\gamma)$-stable with respect to $\msig \in \PSD^{d \times d}$} if for all functions $w: \R^d \to [0,1]$ such that $\E_{X \sim G}[w(X)] \geq 1 -\eps$, the weighted second moment matrix $\mathbf \Sigma_{G_w} := \frac{\E_{X \sim G}[w(X)XX^\top]}{\E_{X \sim G}[w(X)]}$ satisfies $(1 - \gamma) \mathbf \Sigma \preceq \mathbf \Sigma_{G_w} \preceq (1 + \gamma) \mathbf \Sigma$.
\end{definition}
In \cite{JamLT20, DKPP23}, the distribution $G$ in Definition~\ref{def:stability} corresponds to the uniform distribution over the remaining inliers, i.e.\ $S\cap T$ in \Cref{def:contaminaton-model}.
In this notation, the uniform distribution on the corrupted set $T$ can be written as $(1-\eps )G + \eps B$, where $B$ corresponds to the uniform distribution over the outliers $T\setminus S$.
Under the stability condition stated in Definition~\ref{def:stability}, the following result from \cite{DKPP23} gives a generic 1-ePCA algorithm that runs in nearly-linear time.
\begin{proposition}[{\cite[Theorem 3.1]{DKPP23}}]
    \label{prop:robust-ePCA-stability}
    Let $\eps_0 , \gamma_0$ be sufficiently small absolute constants. 
    Let $T$ be a set of $n$ data points in $\R^d$.
    For $\eps \in (0,\eps_0)$ and $\gamma \in (0,\gamma_0)$,
    suppose the uniform distribution on $T$ can be written as $(1 - \eps) G + \eps B$, where $G$ is $(\eps, \gamma)$-stable with respect to $\mathbf \Sigma$.
    There is an algorithm $\alg_1$ taking $T$, $\eps$, $\gamma$, and $\delta \in (0,1)$ as inputs. $\alg_1$ outputs a unit vector $v \in \R^d$ such that, with probability $ \ge 1 - \delta$, $v$ is an $O(\gamma)$ $1$-ePCA of $\msig$, and $v$ lies in the span of $T$, in time
    \[O\Par{\frac{nd}{\gamma^2}\polylog\Par{\frac d {\eps\delta}}}.\]
\end{proposition}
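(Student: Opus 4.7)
The plan is to build on the canonical filtering framework for stability-based robust estimation and achieve near-linear runtime by combining iterative reweighting with an approximate top-eigenvector subroutine. At a high level, the stability hypothesis guarantees that any $w : \R^d \to [0,1]$ with $\E_{X \sim G}[w(X)] \ge 1 - \eps$ keeps $\msig_{G_w}$ within a $(1 \pm \gamma)$ Loewner factor of $\msig$, so it suffices to find weights on $T$ under which the outliers in $T \setminus S$ do not contribute disproportionately to the top eigendirection of the reweighted empirical covariance.

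Concretely, I would maintain weights $w_i \in [0,1]$ on the points of $T$, initialized to $1$, and repeat the following loop. Compute (approximately) a top unit eigenvector $v$ of $\hmsig_w \defeq \frac{\sum_i w_i x_i x_i^\top}{\sum_i w_i}$. If $v^\top \hmsig_w v \le (1 + O(\gamma))\lam_1(\msig)$ (certified from the current weights against the stability tolerance), then stability certifies $v$ as an $O(\gamma)$-$1$-ePCA of $\msig$ and we output it. Otherwise, the excess Rayleigh quotient must be explained by outliers with large $|v^\top x_i|$, and we perform a soft downweighting step $w_i \gets w_i(1 - f(|v^\top x_i|))$ for a suitable score function $f$, designed so that the total weight removed from outliers strictly exceeds that removed from inliers.

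For near-linear time, the approximate top-eigenvector step would be implemented via power iteration or an Oja-style method on the operator $\sum_i w_i x_i x_i^\top$: each matrix–vector product runs in $O(nd)$ time, and $\tO(1/\gamma^2)$ products suffice to obtain the required accuracy, giving $O(nd/\gamma^2)$ per invocation. A standard potential-function argument on the quantity $\sum_{i \in S \cap T}(1 - w_i) - \sum_{i \in T \setminus S}(1 - w_i)$ bounds the number of filtering rounds by $O(\polylog(d/(\eps\delta)))$, and a union bound over the subroutine's randomness in each round yields overall success probability $\ge 1 - \delta$. That $v \in \linspan(T)$ is immediate from the subroutine only operating on vectors in $\linspan(T)$.

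The main obstacle I would expect is reconciling the approximate nature of the eigenvector subroutine with the precision needed by the filter: a $(1 \pm \eta)$-approximate eigenvector can mislead the filter if $\eta \gtrsim \gamma$, so one must set $\eta = \Theta(\gamma)$ and argue that the accumulated perturbation over all filtering rounds remains $O(\gamma)$. Additionally, obtaining $O(nd/\gamma^2)$ runtime rather than the more standard $\tO(nd^2)$ filtering runtime requires never explicitly forming $\hmsig_w$ and avoiding any full-spectrum computation; this is the core technical novelty and likely relies on warm-starting the eigenvector subroutine across filtering rounds, exploiting the fact that consecutive weight updates change $\hmsig_w$ only mildly.
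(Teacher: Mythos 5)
This proposition is not proven in the paper: it is imported verbatim as \cite[Theorem 3.1]{DKPP23} and used as a black-box $1$-ePCA oracle for the deflation reduction. There is thus no internal proof to compare against, and the appropriate ``proof'' here is the citation itself.

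As a reconstruction of what \cite{DKPP23} actually does, your sketch has the right skeleton (stability-driven soft filtering guided by near-linear-time approximate top eigenvectors), but two steps as written would not go through. First, your stopping criterion ``if $v^\top \hmsig_w v \le (1 + O(\gamma))\lam_1(\msig)$'' presupposes access to $\lam_1(\msig)$, which the algorithm does not have; the actual certificate must be intrinsic to the data, e.g.\ bounding the estimated outlier contribution along $v$ against the weighted empirical Rayleigh quotient, using that stability caps the inliers' contribution along every direction so any large excess is attributable to the $\eps B$ part. Second, the $\polylog$ round bound does not follow from a generic potential argument with warm-started eigensolvers. In \cite{DKPP23} each round runs a fresh power iteration, and the round count is controlled by a multiplicative downweighting rule that must be shown to (a) always remove at least as much weight from outliers as from inliers, so the stability hypothesis (and hence the certificate) remains valid across rounds, and (b) shrink the outliers' excess contribution geometrically. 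Your sketch names the potential $\sum_{i \in S\cap T}(1 - w_i) - \sum_{i \in T\setminus S}(1 - w_i)$ but substantiates neither property, and those are precisely where the technical work of that theorem lives; the warm-starting idea you flag as ``the core technical novelty'' is not in fact needed or used.
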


In the context of our reduction in Theorem~\ref{thm:epca_reduce}, we will repeatedly invoke the above algorithm $\alg$ on deflated data, i.e.\ after projecting out all reported principal components so far. Conveniently, 
stability is preserved under arbitrary deflations, as shown below.
\begin{lemma}\label{lem:stable_deflation}
    Let $S \subset \R^d$ be such that the uniform distribution over $S$ is $(\eps,\gamma)$-stable with respect to $\mathbf{\Sigma}$.
    For any $\mproj \in \R^{d \times d}$, the uniform distribution over $\{\mproj x\}_{x \in S}$ is $(\eps,\gamma)$-stable with respect to $\mproj \mathbf{\Sigma} \mproj$.
\end{lemma}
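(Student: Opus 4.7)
The plan is to reduce the stability of the deflated distribution to the stability of the original distribution by pulling back weight functions through $\mproj$. Concretely, let $G$ denote the uniform distribution on $S$ and $G'$ the uniform distribution on $\{\mproj x\}_{x \in S}$, and fix an arbitrary weight function $w' : \R^d \to [0,1]$ with $\E_{Y \sim G'}[w'(Y)] \geq 1 - \eps$. I would define the pulled-back weight $w : \R^d \to [0,1]$ by $w(x) \defeq w'(\mproj x)$. Since both $G$ and $G'$ place mass $1/|S|$ on $x$ and $\mproj x$ respectively, it is immediate that $\E_{X \sim G}[w(X)] = \E_{Y \sim G'}[w'(Y)] \geq 1 - \eps$, so $w$ is an admissible weight function for the stability hypothesis on $G$.

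Next I would compute the weighted second moment matrix of $G'$ under $w'$. Using $(\mproj x)(\mproj x)^\top = \mproj (xx^\top) \mproj$, a direct calculation yields
\[
\msig_{G'_{w'}} = \frac{\sum_{x \in S} w'(\mproj x) (\mproj x)(\mproj x)^\top}{\sum_{x \in S} w'(\mproj x)} = \mproj \Par{\frac{\sum_{x \in S} w(x) xx^\top}{\sum_{x \in S} w(x)}} \mproj = \mproj \msig_{G_w} \mproj.
\]
Applying the $(\eps,\gamma)$-stability of $G$ with respect to $\msig$ to the weight $w$ gives $(1 - \gamma)\msig \preceq \msig_{G_w} \preceq (1 + \gamma)\msig$. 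Conjugation by the fixed matrix $\mproj$ preserves the Loewner ordering, so sandwiching the inequality between $\mproj$ on both sides produces $(1-\gamma)\mproj\msig\mproj \preceq \msig_{G'_{w'}} \preceq (1+\gamma)\mproj\msig\mproj$, as required. Since $w'$ was arbitrary, this establishes the claimed $(\eps,\gamma)$-stability of $G'$ with respect to $\mproj \msig \mproj$.

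There is no genuine obstacle here; the only substantive content is the algebraic identity $(\mproj x)(\mproj x)^\top = \mproj(xx^\top)\mproj$ and the fact that conjugation by $\mproj$ respects the Loewner order. The proof does not require $\mproj$ to be an orthogonal projection (or even symmetric), which is why the lemma is stated for arbitrary $\mproj \in \R^{d \times d}$ and hence applies to each intermediate deflation matrix $\mproj_i$ arising in Algorithm~\ref{alg:bbpca}.
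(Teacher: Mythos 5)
Your proof takes the same approach as the paper's but makes explicit a step the paper leaves implicit. The paper's entire proof is the one-liner that conjugation preserves the Loewner order; what you add is the crucial pullback $w(x) \defeq w'(\mproj x)$ of the weight function, which is exactly what is needed to transport an admissible weight on $\{\mproj x\}_{x \in S}$ back to an admissible weight on $S$ and invoke the stability hypothesis. That bookkeeping is correct and is the right way to fill in the paper's terse argument.

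One imprecision in your closing remark: the identity $(\mproj x)(\mproj x)^\top = \mproj(xx^\top)\mproj$ holds only when $\mproj = \mproj^\top$; in general $(\mproj x)(\mproj x)^\top = \mproj(xx^\top)\mproj^\top$. So for a genuinely arbitrary (non-symmetric) $\mproj$, the correct conclusion would be stability with respect to $\mproj\msig\mproj^\top$, and the order-preservation step should be invoked as $\ma \preceq \mb \implies \mq^\top\ma\mq \preceq \mq^\top\mb\mq$ with $\mq \defeq \mproj^\top$. Your claim that "the proof does not require $\mproj$ to be\ldots even symmetric, which is why the lemma is stated for arbitrary $\mproj$" is therefore not quite right as written — the lemma as stated (with $\mproj\msig\mproj$) only makes literal sense for symmetric $\mproj$. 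The paper shares this same looseness, and it is harmless for the application since every $\mproj_i$ in Algorithm~\ref{alg:bbpca} is an orthogonal projection and hence symmetric, but you should either write $\mproj\msig\mproj^\top$ or note that $\mproj$ is taken to be symmetric.
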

\begin{proof}
For $\ma, \mb \in \Sym^{d \times d}$,  $\mathbf{A} \preceq \mathbf B$ implies $\mproj \ma \mproj \preceq \mproj \mb \mproj$ for all $\mproj \in \R^{d \times d}$, giving the claim.
\end{proof}
Thus, \Cref{prop:robust-ePCA-stability} is compatible with our framework in Theorem~\ref{thm:epca_reduce} and can be used as a $1$-ePCA oracle for $\mathbf{\Sigma}$. 
Combining this observation with \Cref{thm:epca_reduce}, we obtain nearly-linear time algorithms for robust $k$-ePCA.  Importantly, the sample complexity of our robust $k$-ePCA algorithm does not increase with $k$, as we can reuse the same samples in each call to \Cref{prop:robust-ePCA-stability} due to Lemma~\ref{lem:stable_deflation}.

\begin{corollary}[$k$-ePCA in nearly-linear time under stability] 
\label{cor:k-epca-stability}
In the setting of Proposition~\ref{prop:robust-ePCA-stability}, 
there is an algorithm $\alg_k$ taking $T$, $\eps$, $\gamma$, $\delta \in (0,1)$, and $k \in [d]$ as inputs. $\alg_k$ outputs orthonormal $\mathbf U \in \R^{d \times k}$, such that,  with probability $\ge 1 - \delta$, $\mathbf{U}$ is an $O(\gamma)$-$k$-ePCA of $\mathbf{\Sigma}$, in time 
\[O\Par{\frac{ndk}{\gamma^2}\polylog\Par{\frac d {\eps\delta}}}.\]
\end{corollary}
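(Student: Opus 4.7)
The plan is to instantiate Algorithm~\ref{alg:bbpca} ($\BBPCA$) with $\oopca$ taken to be the algorithm $\alg_1$ from Proposition~\ref{prop:robust-ePCA-stability}, applied at each iteration to an appropriately deflated version of the dataset $T$. Specifically, at iteration $i \in [k]$, after having produced unit vectors $u_1, \ldots, u_{i-1}$ with corresponding projector $\mproj_{i-1} = \id_d - \sum_{j<i} u_j u_j^\top$, I will run $\alg_1$ on the projected dataset $T_{i-1} \defeq \{\mproj_{i-1} x\}_{x \in T}$ with failure probability parameter $\delta/k$ to produce $u_i$.

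To see that this invocation is valid, I would first decompose the uniform distribution on $T_{i-1}$ as $(1-\eps) G_{i-1} + \eps B_{i-1}$, where $G_{i-1}$ and $B_{i-1}$ are the pushforwards of $G$ and $B$ (the inlier and outlier distributions from the decomposition of the original $T$) under $x \mapsto \mproj_{i-1} x$. By Lemma~\ref{lem:stable_deflation}, $G_{i-1}$ is $(\eps,\gamma)$-stable with respect to $\mproj_{i-1}\msig\mproj_{i-1}$, so Proposition~\ref{prop:robust-ePCA-stability} applies and, with probability at least $1-\delta/k$, returns $u_i$ which is an $O(\gamma)$-$1$-ePCA of $\mproj_{i-1}\msig\mproj_{i-1}$. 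Crucially, the ``lies in the span of the input data'' clause of Proposition~\ref{prop:robust-ePCA-stability} ensures $u_i \in \linspan(T_{i-1}) \subseteq \linspan(\mproj_{i-1})$, so $u_i \perp u_j$ for $j < i$ automatically and the deflation step $\mproj_i \gets \mproj_{i-1} - u_i u_i^\top$ yields a valid rank-$(d-i)$ projector. In other words, the $i$-th call to $\alg_1$ realizes a single step of the $\eps$-$1$-ePCA oracle demanded by Definition~\ref{def:epca_oracle} with $\eps = O(\gamma)$ and $\mm \gets \msig$.

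A union bound over the $k$ iterations gives that, with probability at least $1-\delta$, every call to $\alg_1$ succeeds, in which case the collection $\mmu \defeq \begin{pmatrix} u_1 & \cdots & u_k \end{pmatrix}$ produced by $\BBPCA$ is orthonormal and, by Theorem~\ref{thm:epca_reduce} applied with $\eps = O(\gamma)$, is an $O(\gamma)$-$k$-ePCA of $\msig$. The runtime follows by summing: each call to $\alg_1$ takes $O(\frac{nd}{\gamma^2}\polylog(\frac{d}{\eps\delta}))$ time (with $\delta$ replaced by $\delta/k$, which only changes the polylogarithmic factor), and projecting $T$ through $\mproj_{i-1}$ can be maintained incrementally at $O(nd)$ cost per iteration, giving a total of $O(\frac{ndk}{\gamma^2}\polylog(\frac{d}{\eps\delta}))$.

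The only subtle point — and the reason the sample complexity does \emph{not} grow with $k$ — is that we reuse the \emph{same} corrupted sample set $T$ across all $k$ iterations rather than drawing fresh samples. This is justified precisely by Lemma~\ref{lem:stable_deflation}, which guarantees that the inlier stability certificate underlying Proposition~\ref{prop:robust-ePCA-stability} is inherited, deterministically and for every projector $\mproj_{i-1}$ arising in the algorithm, from the single high-probability event that the original uniform distribution on $S \cap T$ is $(\eps,\gamma)$-stable with respect to $\msig$. Once this is observed, the argument is essentially a bookkeeping composition of Theorem~\ref{thm:epca_reduce}, Lemma~\ref{lem:stable_deflation}, and Proposition~\ref{prop:robust-ePCA-stability}.
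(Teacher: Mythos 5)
Your proof is correct and matches the paper's intended argument exactly: the paper's own justification for Corollary~\ref{cor:k-epca-stability} (given in the surrounding discussion rather than a formal proof environment) is precisely to run $\BBPCA$ with $\alg_1$ from Proposition~\ref{prop:robust-ePCA-stability} as the oracle, invoking Lemma~\ref{lem:stable_deflation} to see that the same sample set remains stable for every deflated target $\mproj_{i-1}\msig\mproj_{i-1}$, and then applying Theorem~\ref{thm:epca_reduce}. You also correctly handle the two bookkeeping points the paper elides, namely reducing the per-call failure probability to $\delta/k$ (which only affects the polylog, since $k \le d$) and noting that the returned vector lying in $\linspan(\mproj_{i-1})$ guarantees orthogonality of the output.
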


\Cref{cor:k-epca-stability} also leads to $k$-cPCA guarantees using \Cref{lem:etoc} and \Cref{thm:final_composition} in some parameter regimes, but we focus on ePCA for compatibility with \Cref{prop:robust-ePCA-stability}.
In Sections~\ref{ssec:subg_stability} and~\ref{ssec:ht_stability}, we apply Corollary~\ref{cor:k-epca-stability} to two fundamental distribution families: sub-Gaussian distributions and hypercontractive distributions.
Our results follow via establishing appropriate stability conditions.

\subsubsection{Sub-Gaussian distributions}\label{ssec:subg_stability}

In this section, we show that by leveraging known stability conditions for sub-Gaussian distributions, we can directly apply Corollary~\ref{cor:k-epca-stability} to design a robust $k$-PCA algorithm. We first give a definition of the sub-Gaussian distribution family under consideration.

\begin{definition}[Sub-Gaussian distribution]
    We say a distribution $\dist$ on $\R^d$ is \emph{$r$-sub-Gaussian} for a parameter $r\geq 1$ if it has mean $\0_d$ and covariance $\mathbf \Sigma$,\footnote{The same symmetrization strategy as discussed in Section~\ref{ssec:app_prelims} shows that the assumption that $\dist$ is mean-zero is without loss of generality. We defer additional discussion to \cite{JamLT20}.} and for all unit vectors $v \in \R^d$ and $t\in \R $,
    \[\E_{X \sim D}\Brack{\exp(t v^\top X)} \leq \exp\Par{\frac{t^2 r}{2} v^\top \mathbf \Sigma v}.\]
\end{definition}

Next, we present a result from \cite{JamLT20} bounding the sample complexity required for the uniform distribution over a set of corrupted samples to be stable with respect to a covariance matrix.

\begin{lemma}[{Corollary 4, \cite{JamLT20}}]\label{lem:subg_stable}
    Let $\dist$ be an $O(1)$-sub-Gaussian distribution on $\R^d$ with covariance $\mathbf \Sigma$.
    Let $\eps \in (0, \eps_0)$ for an absolute constant $\eps_0$, $\delta \in (0,1)$, and for an absolute constant $C$, let $\gamma \defeq C \eps \log(\frac 1 \eps) $. If $S$ is a set of $n$ i.i.d.\ samples from $\dist$ where, for an appropriate constant,
    \[n = \Theta\Par{\frac{d + \log(\frac 1 \delta)}{\gamma^2}},\]
    then with probability $\ge 1 - \delta$, the uniform distribution over $S$ is $(\eps,\gamma)$ stable with respect to $\mathbf \Sigma$.
\end{lemma}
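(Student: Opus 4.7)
The plan is to reduce to the whitened case by the substitution $X \mapsto \msig^{-1/2} X$, which preserves $O(1)$-sub-Gaussianity and converts $(\eps, \gamma)$-stability with respect to $\msig$ into the analogous statement with respect to $\id_d$. The task then becomes: for any $w: \R^d \to [0,1]$ with $\E[w(X)] \ge 1-\eps$ over the empirical distribution on $S$, bound $\normop{\msig_{S_w} - \id_d} \le \gamma = \Theta(\eps \log(1/\eps))$ with probability $\ge 1-\delta$, where $\msig_{S_w}$ is the reweighted second moment from Definition~\ref{def:stability}.

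The argument splits into two ingredients. First, standard matrix concentration for sub-Gaussian samples, e.g.\ matrix Bernstein applied to $X_iX_i^\top - \id_d$ after truncation at norm $O(\sqrt{d})$ (or directly Vershynin's sub-exponential operator-norm bound), shows that $n = \Theta((d + \log(1/\delta))/\gamma^2)$ samples suffice to make $\normop{\tfrac{1}{n}\sum_{i \in [n]} X_i X_i^\top - \id_d} \le \gamma/10$ with probability $\ge 1 - \delta/2$. Second, for any unit direction $v$ and threshold $t^\star = \Theta(\log(1/\eps))$, the sub-Gaussian tail bound $\Pr[\langle v, X\rangle^2 \ge t^\star] \le \eps$ together with the integration-by-parts identity $\E[\langle v, X\rangle^2 \1\{\langle v, X\rangle^2 \ge t^\star\}] = O(\eps \log(1/\eps))$ show that even the worst adversarial $\eps$-reweighting moves the population second moment in direction $v$ by at most $O(\eps \log(1/\eps))$.

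The main obstacle is making this bound uniform over all $v \in \mathbb{S}^{d-1}$ and all admissible weight functions $w$ simultaneously. I would handle $v$ via a $\tfrac14$-net of $\mathbb{S}^{d-1}$ of size at most $9^d$, at each net point applying Bernstein's inequality to control both the truncated empirical second moment $\tfrac{1}{n}\sum_i \langle v, X_i\rangle^2 \1\{\langle v, X_i\rangle^2 \le t^\star\}$ and the empirical tail mass above $t^\star$; the factor $9^d$ is what contributes the $d$ term to the sample complexity, while $\log(1/\delta)$ arises from the Bernstein failure probabilities. For each fixed $v$, the worst weight function places mass $1$ on the samples with the largest $\langle v, X_i\rangle^2$, so it suffices to bound this maximal ``top-$\eps n$'' contribution uniformly over the net. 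The delicate step is extracting the optimal rate $\gamma \asymp \eps \log(1/\eps)$ rather than $\eps \log^{3/2}(1/\eps)$: one must split the variance contribution at $t^\star$, using sub-Gaussianity to bound the tail portion and the tight truncated concentration bound below $t^\star$, rather than a crude tail-only bound. This rate matches the known lower bound for robust covariance estimation under sub-Gaussian contamination.
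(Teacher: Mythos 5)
The paper offers no proof of Lemma~\ref{lem:subg_stable}: it is quoted verbatim as Corollary~4 of \cite{JamLT20}, and the burden of proof sits entirely with that reference. So the comparison here is between your reconstruction and the external literature, not against a proof inside this paper.

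Your sketch has the right skeleton: whiten so $\msig = \id_d$, control $\normop{\tfrac 1 n \sum_i X_iX_i^\top - \id_d}$ by sub-exponential matrix concentration, then argue that no $\eps$-reweighting shifts the quadratic form in any direction by more than $O(\eps\log\tfrac1\eps)$ by splitting at a threshold $t^\star = \Theta(\log\tfrac1\eps)$ and uniformizing via a $\tfrac14$-net. This is the standard blueprint for sub-Gaussian stability, and the step you flag as ``delicate'' is indeed the crux.

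However, the sketch does not close that step, and I do not think it closes as written. The trouble is the clean additive $\log\tfrac1\delta$ in the claimed sample complexity. Your plan applies Bernstein, per net direction $v$, to $\tfrac 1 n \sum_i \langle v, X_i\rangle^2 \1\{\langle v, X_i\rangle^2 > t^\star\}$. These summands are not bounded by $t^\star$; they are sub-exponential with scale $\Theta(1)$, so after truncating at $R = \Theta(\log\tfrac n\delta)$, Bernstein's range term contributes $\approx R\,\gamma$ per unit probability budget. Carrying this through with the union bound over $9^d$ net points and $n = \Theta\Pars{(d + \log\tfrac1\delta)/\gamma^2}$, the Bernstein exponent closes only when $\gamma\bb{\log\tfrac1\eps + R} = O(1)$, i.e.\ when $\eps\log\tfrac1\eps\Pars{\log d + \log\tfrac1\eps + \log\tfrac1\delta} = O(1)$ --- an extra hypothesis not present in the lemma. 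So a pointwise Bernstein-over-net argument recovers the $d$ dependence but not the claimed additive $\log\tfrac1\delta$. The actual proofs in the stability literature (including \cite{JamLT20} and the Diakonikolas--Kane--Stewart-style lemmas they build on) avoid this by a more structured argument: they bound the top-$\eps n$ contribution through the empirical order statistics and the sub-Gaussian quantile function, rather than Bernstein directly on the unbounded tail sum. You have correctly located where the subtlety lives, but have not dispatched it, and the missing ingredient is the hard part.
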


Applying \Cref{prop:robust-ePCA-stability} with Lemma~\ref{lem:subg_stable} in hand, \cite[Theorem 1.2]{DKPP23} obtained a nearly-linear time algorithm for robust 1-ePCA.\footnote{\Cref{prop:robust-ePCA-stability} is applicable because if the uniform distribution over a set $S$ is $(\eps,\gamma)$-stable, then the uniform distribution over $S' \subset S$ with $|S'| \geq (1-\eps) |S|$ is also $(\eps, O(\gamma))$-stable; we apply this with $S' = S \cap T$.}
By simply replacing the use of \Cref{prop:robust-ePCA-stability} with \Cref{cor:k-epca-stability}, we further obtain the following robust $k$-PCA result for sub-Gaussian distributions.

\restaterobsubg*

As discussed previously, the sample complexity of Theorem~\ref{thm:robust_subg} notably does not grow as $k$ increases.

\subsubsection{Hypercontractive distributions}\label{ssec:ht_stability}

In this section, we relax the sub-Gaussianity assumption used in \cite{JamLT20, DKPP23} and instead assume that the data is drawn from a $(p,C_p)$-hypercontractive distribution (Definition \ref{def:hypercontractivity}).
Our main result in this section is the following algorithm for $k$-ePCA in the hypercontractive setting.

\restaterobht*
We mention that the approximation factor of $\gamma = \Theta(C_p^2 \eps^{1 - \frac{2}{p}})$ achieved by Theorem~\ref{thm:robust-k-epca-heavy-tailed} is optimal under $\eps$-corruption for a $(p,C_p)$-hypercontractive distribution (cf.\ \Cref{lem:lower-bound-robust}).
Further, the sample complexity of Theorem~\ref{thm:robust-k-epca-heavy-tailed} differs from the information-theoretic sample complexity by a factor of $\beta = C_p^6 \eps^{-\frac{2}{p}}$ and a $\log d$ term~\cite[Proposition 6.7]{LiuKO22}. 
In particular, for constant $C_p, \eps$, the sample complexity of Theorem~\ref{thm:robust-k-epca-heavy-tailed} nearly-matches that of Gaussian data even for heavy-tailed distributions, while being robust to a constant fraction of corruption and running in nearly-linear time.

At the end of the section, we give a detailed comparison between Theorem~\ref{thm:robust-k-epca-heavy-tailed} and the closest related work of \cite{KonSKO20}, which handled distributions with bounded fourth moments under an assumption related to (but not directly comparable to) our assumption of $(4, C_4)$-hypercontractivity.

Theorem~\ref{thm:robust-k-epca-heavy-tailed} follows by establishing that a large enough set of samples from hypercontractive distributions satisfy the conditions of \Cref{cor:k-epca-stability}. To establish stability, one could hope that the samples drawn from $\dist$ are stable as is (without removing any $\eps$-fraction).
    A common approach towards this goal would be using matrix Chernoff bounds, which would lead to a multiplicative dependence on $\log(\frac 1 \delta)$, where $\delta$ is the failure probability.
    Instead, we shall use the following result from \cite{DiaKP20}, phrased in our notation, that does a more white-box analysis and uses the flexibility permitted by removing $O(\eps)$-fraction of samples to establish a sharper dependence on $\log(\frac 1 \delta)$.

\begin{lemma}[{Lemma 4.1 and 4.2, \cite{DiaKP20}}]
    \label{lem:stability-heavy-tailed-dkp}
        Let $\mathcal{D}'$ be a distribution on $\R^d$ satisfying uncentered hypercontractivity, i.e.\ for an even integer $p \ge 4$, for all $v \in \R^d$,
        $(\E_{x \sim \mathcal D'}[\inprod{x}{v}^p])^{\frac 1 p} \leq \sigma_p (\E_{x \sim \mathcal D'}[\inprod{x}{v}^2])^{\half}$. 
        Let $\eps \in (0,\half)$ and $\delta \in (0,1)$.
        Suppose $\mathcal{D}'$ is supported on a ball of radius $\sqrt{R} \ge 1$, and $\mathbf{\Sigma}' \defeq \E_{x \sim \dist'}[xx^\top]$ satisfies $\normop{\mathbf \Sigma'} = O(1)$.
        Let $S$ be a set of $n$ i.i.d.\ samples from $\mathcal{D}'$, for
        \[n = \Theta\Par{\frac{R \log d}{\alpha^2} + \frac{\sigma_p^4 \log(\frac 1 \delta)}{\alpha^2} + \frac{\sigma_p^2 \log(\frac 1 \delta)}{\alpha \eps^{\frac 2 p}} 
+ \sqrt{\frac{d \log(\frac 1 \delta)}{\eps^2\alpha}} + \frac{\log(\frac 1 \delta)}{\eps}},\]
        for an appropriate constant, and $\alpha \in (0, 1)$. Then with probability $\ge 1 - \delta$, the uniform distribution on $S$ can be written as $(1 - \eps) P + \eps B$, where $P$ is $(\eps, \gamma)$-stable with respect to $\id_d$, for 
        \[\gamma = O\Par{\normsop{\msig' - \id_d} + \sigma_p^2\eps^{1 - \frac 2 p} + \alpha}.\]
    \end{lemma}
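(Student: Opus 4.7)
The plan is to establish the stability condition of Definition~\ref{def:stability} by proving it as a deterministic consequence of two concentration events that together hold with probability $\ge 1 - \delta$. Writing a generic weight function $w: S \to [0,1]$ with $\sum_i w_i \ge (1-\eps)n$ as ``the uniform distribution on $S$ minus an $\eps$-fraction reweighting,'' the definition of stability with respect to $\id_d$ is equivalent to the two-sided bound
\begin{equation*}
\sup_{\|v\|=1} \Abs{\frac{1}{|S_w|}\sum_{x \in S}w(x)\,(v^\top x)^2 - 1} \;=\; O(\gamma),
\end{equation*}
uniformly over such $w$. By splitting $\sum_i w_i (v^\top x_i)^2 = \sum_i (v^\top x_i)^2 - \sum_i (1-w_i)(v^\top x_i)^2$ and noting $\sum_i (1-w_i) \le \eps n$ with $1-w_i \in [0,1]$, it suffices to control (i) the empirical second moment in each direction and (ii) the worst-case $\eps$-fraction tail sum in each direction.

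For (i), I would apply matrix Bernstein to $\frac{1}{n}\sum_i x_i x_i^\top - \msig'$. The samples are supported in a ball of radius $\sqrt R$, so $\normop{x_i x_i^\top} \le R$, and the variance proxy is $O(R\normop{\msig'}) = O(R)$; Bernstein gives operator-norm deviation at most $\alpha$ with probability $\ge 1 - \delta/2$ provided $n \gtrsim \frac{R\log d}{\alpha^2} + \frac{\log(1/\delta)}{\alpha}$, and after replacing $\msig'$ by $\id_d$ we pick up the additive $\normsop{\msig' - \id_d}$ term in $\gamma$.

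For (ii), the core claim is $\sup_{\|v\|=1} \max_w \frac{1}{n}\sum_i w_i (v^\top x_i)^2 = O(\sigma_p^2 \eps^{1-2/p} + \alpha)$. I would first observe that the inner maximum is achieved by deleting the top $\eps n$ samples ordered by $|v^\top x_i|$, which eliminates the combinatorial $\binom{n}{\eps n}$ blow-up. Pointwise in $v$, uncentered hypercontractivity with exponent $p$ gives $\Pr(|v^\top x| > t) \le (\sigma_p/t)^p$, so the $\eps$-quantile of $(v^\top x)^2$ is $O(\sigma_p^2 \eps^{-2/p})$ and the expected truncated tail mass is $O(\sigma_p^2 \eps^{1-2/p})$. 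To control the random tail sum, I would dyadically bucket samples by $|v^\top x|$ and use Bernstein on each bucket (each bucket's range is bounded and its variance controlled by the $p$-th moment), then take a union bound over $O(\log n)$ buckets.

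The main obstacle is lifting pointwise control to a uniform-in-$v$ guarantee on the tail sums, because the tail-selection depends on $v$. I would resolve this via a two-stage covering: an $\alpha/\sigma_p^2$-net over the unit sphere at cost $e^{O(d)}$ (contributing the $\sqrt{d\log(1/\delta)/(\eps^2 \alpha)}$ term by $\sqrt{n}$-type Bernstein slack), combined with a VC argument on halfspace indicators to interpolate between net points without paying extra for the selection of which $\eps n$ indices are deleted. Collecting the two events, the bound $\gamma = O(\normsop{\msig' - \id_d} + \sigma_p^2 \eps^{1-2/p} + \alpha)$ follows from the triangle inequality between ``empirical second moment in direction $v$'' and ``population second moment in direction $v$,'' plus the uniform tail control, which together yield the desired stability of the retained $(1-\eps)$-mass sub-distribution $P$.
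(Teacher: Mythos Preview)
The paper does not prove this lemma; it is quoted from \cite{DiaKP20} without an accompanying argument, so there is no in-paper proof to compare against. I will instead flag where your sketch falls short of the stated bound.

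Your matrix Bernstein computation in step (i) is wrong. The variance proxy is $\normop{\E[(xx^\top - \msig')^2]} \le \normop{\E[\|x\|_2^2\, xx^\top]} \le R\,\normop{\msig'} = O(R)$, so Bernstein actually requires $n \gtrsim \frac{R(\log d + \log(1/\delta))}{\alpha^2}$, not $\frac{R\log d}{\alpha^2} + \frac{\log(1/\delta)}{\alpha}$ as you wrote. In the downstream application (Proposition~\ref{thm:sample-compelxity-stability-heavy-tailed}) $R$ scales with $d$, so this couples $d$ and $\log(1/\delta)$ multiplicatively, which is strictly worse than the lemma's additive form. The paper says exactly this in the sentence preceding the lemma: a direct matrix Chernoff approach ``would lead to a multiplicative dependence on $\log(\frac 1 \delta)$,'' and the stated reason for citing \cite{DiaKP20} is that their argument ``uses the flexibility permitted by removing $O(\eps)$-fraction of samples to establish a sharper dependence on $\log(\frac 1 \delta)$.''

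Your plan never exploits that flexibility. You set up the target as stability of the uniform distribution on all of $S$, and only mention ``the retained $(1-\eps)$-mass sub-distribution $P$'' at the end. But the lemma only requires that \emph{some} $(1-\eps)$-subdistribution $P$ be stable: the analyst may discard $\eps n$ points \emph{before} invoking any concentration bound. Discarding the heaviest samples first replaces the range parameter $R$ in the confidence-dependent terms by a $\sigma_p$-controlled quantile, which is how $\log(1/\delta)$ ends up multiplied by $\sigma_p^4/\alpha^2$ and $\sigma_p^2/(\alpha\eps^{2/p})$ rather than by $R/\alpha^2$. Your step (ii) is closer in spirit to this idea, but because step (i) is run on the full sample set, it already forces the suboptimal sample complexity regardless of what happens in (ii).
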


    We are now ready to give our sample complexity bound for hypercontractive stability.

\begin{proposition}
\label{thm:sample-compelxity-stability-heavy-tailed}
For an even integer $p \geq 4$, let $\mathcal D$ be a $(p,C_p)$-hypercontractive on $\R^d$ with mean $\0_d$ and covariance $\mathbf \Sigma$. Let $\eps \in (0,\eps_0)$ for an absolute constant $\eps_0$, $\delta \in (0,1)$, and $\gamma = \Theta(\sigma_p^2 \eps^{1 - \frac{2}{p}})$ be such that $\gamma \leq \half$.
If $S$ is a set of $n$ i.i.d.\ samples from $\mathcal D$ where, for an appropriate constant,
\[n = \Theta\Par{C_p^2\eps^{-\frac{2}{p}} \cdot \frac{d \log(d)}{\gamma^2}
+ \max(C_p^4,C_p^8\eps^{1 - \frac{2}{p}}) \cdot \frac{\log(\frac 1 \delta)}{\gamma^2}},\]
then with probability $\ge 1 - \delta$, the uniform distribution on $S$ can be written as $(1-\eps) P + \eps B$
for two distributions $P$ and $B$ such that $P$ is $(\eps,\gamma)$-stable with respect to $\mathbf{\Sigma}$.
\end{proposition}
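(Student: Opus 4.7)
The plan is to reduce to the case of identity covariance via whitening, handle heavy tails via a clipping preprocessing step, and then invoke Lemma~\ref{lem:stability-heavy-tailed-dkp} on the clipped distribution. Since $(\eps,\gamma)$-stability and $(p,C_p)$-hypercontractivity are both preserved by invertible linear transformations, it suffices to prove the claim for the whitened distribution $\widetilde{\mathcal D}$ of $Y := \mathbf{\Sigma}^{-1/2} X$ for $X \sim \mathcal D$, which has identity covariance, $(p,C_p)$-hypercontractivity, and $\Tr(\id_d) = d$. The conclusion for general $\mathbf{\Sigma}$ then follows by applying $\mathbf{\Sigma}^{1/2}$ to the samples and to the resulting stable component.

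Next, I would clip at radius $\sqrt{R}$, where $R$ is chosen so that Corollary~\ref{cor:clipping-and-truncation} applies with $\rho := \Theta(\gamma) = \Theta(C_p^2\eps^{(p-2)/p})$. A short calculation using $R \geq (C_p^p/\rho)^{2/(p-2)} d$ yields $R = \Theta(C_p^2 \eps^{-2/p} d)$, and Corollary~\ref{cor:clipping-and-truncation} then guarantees that the clipped distribution $\mathcal T_R(\widetilde{\mathcal D})$ satisfies: (i) its second moment matrix $\mathbf{\Sigma}'$ has $\normop{\mathbf{\Sigma}' - \id_d} \leq O(\gamma)$; (ii) $\Pr_{Y \sim \widetilde{\mathcal D}}[\norm{Y}_2 \geq \sqrt{R}] \leq O(\eps)$; and (iii) $\mathcal T_R(\widetilde{\mathcal D})$ satisfies uncentered hypercontractivity with constant $2C_p$. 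These three conditions are exactly the hypotheses of Lemma~\ref{lem:stability-heavy-tailed-dkp} applied to $\mathcal T_R(\widetilde{\mathcal D})$; applying it with $\alpha := \Theta(\gamma)$ and $\sigma_p := 2C_p$ yields that, for the stated $n$, with probability $\geq 1 - \delta/2$ the uniform distribution on $\{Z_i := \mathcal T_R(Y_i)\}_{i \in [n]}$ can be written as $(1 - \eps) P' + \eps B'$ where $P'$ is $(\eps, O(\gamma))$-stable with respect to $\id_d$.

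Finally, I would transfer this guarantee from $\{Z_i\}$ back to $\{Y_i\}$. By a Chernoff bound applied to the Bernoulli indicators $\1[\norm{Y_i}_2 > \sqrt{R}]$ of clipping events, which have mean $O(\eps)$, with probability $\geq 1 - \delta/2$ at most $O(\eps n)$ indices are clipped (this requires $n = \Omega(\log(1/\delta)/\eps)$, dominated by the stated sample complexity). Conditioned on this event, $Y_i = Z_i$ on an index set of size at least $(1 - O(\eps)) n$, so restricting $P'$ to the unclipped indices yields a distribution $P$ of mass at least $1 - O(\eps)$ which is still $(\eps, O(\gamma))$-stable with respect to $\id_d$, since removing an $O(\eps)$-fraction of mass from a stable distribution preserves stability up to constant factors. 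Un-whitening then gives the desired decomposition for the original samples $\{X_i\}$. The main technical obstacle will be the parameter bookkeeping, in particular verifying that all four summand terms in Lemma~\ref{lem:stability-heavy-tailed-dkp}'s sample complexity bound are dominated by the claimed $n$: the $R\log(d)/\alpha^2$ and $\sigma_p^4 \log(1/\delta)/\alpha^2$ terms match the two claimed summands directly, the $\sigma_p^2 \log(1/\delta)/(\alpha \eps^{2/p})$ term simplifies to $\log(1/\delta)/\eps$ and is dominated by $C_p^4 \log(1/\delta)/\gamma^2$ using $\eps^{2(p-2)/p} \leq \eps$ for $p \geq 4$, and the $\sqrt{d\log(1/\delta)/(\eps^2\alpha)}$ cross term can be AM-GM bounded by the geometric mean of the first two summands.
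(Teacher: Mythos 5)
Your proposal is correct and follows essentially the same route as the paper's proof: whiten to reduce to identity covariance, clip at radius $\sqrt R$ with $R = \Theta(C_p^2\eps^{-2/p}d)$ so that Corollary~\ref{cor:clipping-and-truncation} controls the bias and the clipping probability, apply Lemma~\ref{lem:stability-heavy-tailed-dkp} to the clipped distribution with $\alpha = \Theta(\gamma)$ and $\sigma_p = 2C_p$, and transfer the stability guarantee back to the unclipped samples via a Chernoff bound on the clipping indicators. Your bookkeeping for the four terms in the DKP sample-complexity bound (including the AM--GM treatment of the cross term) checks out, and your final transfer step is, if anything, spelled out a bit more carefully than the paper's.
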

\begin{proof}
    We first observe that if the uniform distribution over a set $S'$ satisfies stability with respect to $\mathbf{I}_d$, then $S:= \{\mathbf{\Sigma}^{\half} x\}_{x \in S'}$ satisfies stability with respect to $\mathbf{\Sigma}$.
    We let the samples in $S'$ be  drawn as $\mathbf{\Sigma}^{-\half} x$ for $x \sim \mathcal{D}$, and thus follow an isotropic distribution. Moreover, it is straightforward to see that this transformed distribution also satisfies $(p,C_p)$-hypercontractivity. 
    Thus, we assume $\dist$ is isotropic in the rest of the proof and establish stability with respect to $\id_d$.
    
    Next, we consider a clipped variant of $\mathcal D$. 
    Let $\mathcal D'$ be the distribution of $\mathcal{T}_R(x)$ for $R = (\frac{C_p^p}{\rho})^{\frac{2}{p-2}} d $ and $x \sim \mathcal{D}$, so $\dist'$ is supported on a ball of radius $\sqrt{R} \ge 1$.
        Let $\msig' \defeq \E_{x \sim \mathcal D'}[xx^\top]$.
    We shall choose $\rho = C' C_p^2 \eps^{1 - \frac{2}{p}} \leq \frac \gamma 2$ for a large enough constant $C'$, such that $\rho \leq 1$ by an upper bound on $\gamma$.
    By \Cref{cor:clipping-and-truncation}, $\normop{\mathbf{\Sigma}' - \mathbf{\Sigma}}  = \normop{\mathbf{\Sigma}' - \mathbf{I}_d} \leq \rho$ and the probability that a sample from $\mathcal D$ is clipped is $\le \frac \eps {100}$.
    In particular, by a Chernoff bound, this second conclusion shows that with probability at least $1 - \exp(-\Omega(n\eps)) \geq 1 - \frac \delta 2$,
     less than an $\frac \eps {2}$ fraction of samples will be clipped. 
    We let the clipped samples form the distribution $B$. In the sequel, we prove stability of the unclipped samples.    
    
    Recall that $\dist'$ has second moment matrix $\mathbf{\Sigma}'$ with $\normop{\msig'} \le 2$, and $\dist'$ satisfies uncentered hypercontractivity with $\sigma_p = 2 C_p$ by \Cref{cor:clipping-and-truncation}.
    We hence may apply Lemma~\ref{lem:stability-heavy-tailed-dkp} with $\eps \gets \frac \eps 2$, $\delta \gets \frac \delta 2$,  and $\alpha \gets \rho = \Theta(C_p^2 \eps^{1 - \frac{2}{p}})$, which again lies in $(0,1)$. Since $\rho \geq \normsop{\mathbf \Sigma' - \id_d}$, with probability $1 - \frac \delta 2$, $P$ is $(\frac \eps 2, \gamma)$-stable for $\gamma = O(C_p^2 \eps^{1 - \frac{2}{p}})$. Finally, we bound the sample complexity:
    \begingroup
    \allowdisplaybreaks
    \begin{align*}
      n &= O\Par{\frac{R \log d}{\alpha^2} + \frac{\sigma_p^4 \log(\frac 1 \delta)}{\alpha^2} + \frac{\sigma_p^2 \log(\frac 1 \delta)}{\alpha \eps^{\frac 2 p}} 
+ \sqrt{\frac{d \log(\frac 1 \delta)}{\eps^2\alpha}} + \frac{\log(\frac 1 \delta)}{\eps}}\\
      &= O\Par{\left(\frac{C_p^p}{\rho}\right)^{\frac{2}{p-2}} \frac{d \log d}{\gamma^2}  + \frac{C_p^4\log(\frac 1 \delta)}{\gamma^2} + \frac{\log(\frac 1 \delta)}{\gamma\eps^{\frac 2 p}} +  \sqrt{ \frac{d}{\gamma^2} \frac{\log(\frac 1 \delta)}{ (\eps^2/\gamma) } } + \frac{\log(\frac 1 \delta)}{\eps}} \tag*{(using the value of $R$)}\\
      &= O\Par{ C_p^2 \eps^{-\frac{2}{p}}  \frac{d \log d}{\gamma^2}  + \frac{C_p^4\log(\frac 1 \delta)}{\gamma^2} + \frac{\log(\frac 1 \delta)}{C_p^2\eps} +  \sqrt{ \frac{d}{\gamma^2} \frac{\log(\frac 1 \delta)}{ (\eps^2/\gamma) } } + \frac{\log(\frac 1 \delta)}{\eps}}\tag*{(using the value of $\gamma, \rho$)}\\
      &= O\Par{C_p^2 \eps^{-\frac{2}{p}}  \frac{d \log d}{\gamma^2}  + \frac{C_p^4\log(\frac 1 \delta)}{ \min(\gamma^2,C_p^2\eps)} + C_p^2 \eps^{-\frac{2}{p}}  \frac{d \log d}{\gamma^2} +  \frac{\log(\frac 1 \delta)}{ C_p^2 \eps^{-\frac{2}{p}}(\eps^2/\gamma) }} \tag*{(using $\sqrt{ab}\leq 2a + 2b$)}\\
       &= O\Par{C_p^2 \eps^{-\frac{2}{p}}  \frac{d \log d}{\gamma^2} + \frac{C_p^4 \log(\frac 1 \delta)}{\min(\gamma^2,C_p^2\eps, C_p^2 \eps^{-\frac{2}{p}}(\eps^2/\gamma))}} \\
       &= O\Par{C_p^2\eps^{-\frac{2}{p}}\frac{d \log d}{\gamma^2} +  C_p^4\frac{\log(\frac 1 \delta)}{\min(\gamma^2,\eps)}}\tag*{(using the value of $\gamma$)}\\
       &= O\Par{C_p^2\eps^{-\frac{2}{p}}\frac{d \log d}{\gamma^2} + \max(C_p^4, C_p^8 \eps^{1 - \frac{2}{p}} )  \frac{\log(\frac 1 \delta)}{\gamma^2}}\tag*{(using the value of $\gamma$)}.
    \end{align*}
    \endgroup
    By a union bound, with probability $1 - \delta$,
    the uniform distribution on $S$ can be written as $(1-\eps) P + \eps B$, where $P$ is $(\frac \eps 2,\gamma)$-stable.
    Finally, $(\frac \eps 2,\gamma)$-stability of $P$ implies $(\eps,O(\gamma))$ stability of $P$.\footnote{To see this, note that stability implies every second moment matrix of a $\eps$-weighted subset of the data is bounded in the Loewner order by $O(\gamma) \msig$, a bound which can be applied twice.}
\end{proof}

We now provide the proof of \Cref{thm:robust-k-epca-heavy-tailed}, by leveraging Proposition~\ref{thm:sample-compelxity-stability-heavy-tailed} and Corollary~\ref{cor:k-epca-stability}.

\begin{proof}[Proof of \Cref{thm:robust-k-epca-heavy-tailed}]

Let $S$ be the set of i.i.d.\ samples from $\mathcal{D}$. \Cref{thm:sample-compelxity-stability-heavy-tailed} implies that, with probability $\ge 1 - \delta$, the uniform distribution on $S$ can be written as $(1-\eps)P + \eps B$, where $P$ is $(\eps, \gamma)$ stable with respect to $\mathbf{\Sigma}$.
Since $\eps$ is small enough, the $(\eps,\gamma)$-stability of $P$ implies $P$ is also $(10\eps,O(\gamma))$-stable.
Let $T$ be the corrupted set of samples and observe that the uniform distribution on $T$ is $\le 2\eps$-far from $P$ in total variation.
Thus, the uniform distribution on $T$ can be written as $(1-2 \eps)P' + 2\eps B'$, where $P'$ conditions $P$ on a probability $(1 - O(\eps))$ event and consequently inherits $(8\eps,\gamma)$-stability with respect to $\mathbf{\Sigma}$ from $P$; see, for example, \cite[Lemma 2.12]{DiaKPP22-streaming}.
Reparameterizing $\eps$ to $2\eps$, we see that \Cref{cor:k-epca-stability} is applicable.
\end{proof}

We now show that the error guarantee of \Cref{thm:robust-k-epca-heavy-tailed} is optimal (up to constants) for small $\eps$ and $k=1$.
\begin{lemma}[Lower bound for robust hypercontractive ePCA]
\label{lem:lower-bound-robust}
Let $ c > 0$ be a small enough constant and let $d \in \N$ be sufficiently large. 
Let $\mathcal D$ be a $(p,C_p)$ hypercontractive distribution on $\R^d$
for an even integer $p \ge 4$ and $C_p > 2$, with mean $\0_d$ and covariance $\msig$.
Let $\eps \in (0,\eps_0)$ for $\eps_0 = (\frac{C_p^2}{8})^{\frac p 2}$ and let $\gamma = C_p^2 \eps^{1 - \frac 2 p}$.
There is no algorithm that, with probability $\ge \half$, outputs a unit vector $u$ that is a $c\gamma$-$1$-ePCA of $\msig$, given infinite samples and runtime.
\end{lemma}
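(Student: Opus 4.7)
I would prove this via the classical Le Cam two-point method. Pick orthogonal unit vectors $v_1, v_2 \in \R^d$ (possible since $d$ is sufficiently large) and, for each $v \in \{v_1, v_2\}$, define
\[
\dist^v \defeq (1 - \eta)\dist_0 + \tfrac{\eta}{2}\delta_{av} + \tfrac{\eta}{2}\delta_{-av},
\]
where $\dist_0$ is a hypercontractive mean-zero base (the cleanest choice being $\delta_{\0_d}$, yielding a rank-one covariance) and the parameters $\eta, a$ are chosen so that $\eta = \Theta(\eps)$, $\eta a^p = \Theta(C_p^p)$, and hence $\eta a^2 = \Theta(\gamma)$. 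The resulting covariance is $\msig^v = \msig_{\dist_0} + \eta a^2\, vv^\top$, which has $v$ as a top eigendirection with spectral excess $\Theta(\gamma)$ above the base.

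The first main step is verifying that $\dist^v$ is $(p, C_p)$-hypercontractive. The binding direction is $u = v$, where the spike dominates: the moment ratio equals $(\eta a^p)^{1/p}/(\eta a^2)^{1/2} = \eta^{-(p-2)/(2p)}$. The hypercontractivity inequality thus reduces to $\eta \gtrsim C_p^{-2p/(p-2)}$, and the assumption $\eps \le \eps_0 = (C_p^2/8)^{p/2}$ is precisely what calibrates $\eta = \Theta(\eps)$ to be compatible with this threshold while simultaneously keeping $\eta a^2 = \Theta(\gamma)$ small enough that any contribution from the base $\dist_0$ remains subdominant in the numerator.

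Next, the indistinguishability argument: since $\dist^{v_1}$ and $\dist^{v_2}$ share their base component of mass $1-\eta$ and differ only on disjoint atomic supports of total mass $\eta$ each, $d_{\mathrm{TV}}(\dist^{v_1}, \dist^{v_2}) \le \eta = \Theta(\eps)$. By the standard TV coupling, there is a common distribution $\widetilde{\dist}$ within TV-distance $\eps$ of each, so an $\eps$-contamination adversary can produce i.i.d.\ samples from $\widetilde{\dist}$ starting from either $\dist^{v_i}$. Consequently the law of any algorithm's output $u$ is identical under both hypotheses, and its probability of success $p_i$ against $\msig^{v_i}$ depends only on the law of $u$.

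The final step is the anti-correlation argument. A direct calculation shows that if $u$ is a $c\gamma$-$1$-ePCA of $\msig^{v_i}$, then $\inprod{u}{v_i}^2 \ge 1 - O(c)$ (with a hidden constant depending only on $p$ and $C_p$, and equal to $1$ in the rank-one case). If some $u$ satisfied this for both $i$ simultaneously, then $\inprod{u}{v_1}^2 + \inprod{u}{v_2}^2 \ge 2 - O(c)$; but $v_1 \perp v_2$ and $\norm{u}_2 = 1$ force $\inprod{u}{v_1}^2 + \inprod{u}{v_2}^2 \le 1$, a contradiction for $c$ a sufficiently small constant depending on $p, C_p$. The success sets being disjoint, the identical-law observation then yields $p_1 + p_2 \le 1$, so $\min(p_1, p_2) \le \half$, giving the lemma. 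The main obstacle in this plan is the hypercontractivity check: the base $\dist_0$ and the pair $(\eta, a)$ must be balanced so that the spike's $p$-th moment exactly saturates $\eta a^p \asymp C_p^p (\eta a^2)^{p/2}$, and the sharp threshold $\eps_0 = (C_p^2/8)^{p/2}$ is dictated precisely by this calibration.
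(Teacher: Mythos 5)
Your overall structure — a two-(or multi-)point indistinguishability argument in the strong-contamination model together with anti-correlated success sets — is the right idea and matches the paper's proof in spirit (the paper uses $d+1$ product distributions $\{\dist_i\}$, all within $\eps$ TV of a common $\dist_0$, rather than two). However, there is a genuine gap in the hypercontractivity step that your calibration does not repair. With $\dist_0 = \delta_{\0_d}$, you correctly compute that the moment ratio in the spike direction $v$ is $(\eta a^p)^{1/p}/(\eta a^2)^{1/2} = \eta^{-(p-2)/(2p)}$, so $(p,C_p)$-hypercontractivity of $\dist^v$ forces the \emph{lower} bound $\eta \gtrsim C_p^{-2p/(p-2)}$. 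Since the TV/coupling step requires $\eta \lesssim \eps$, your construction can only be hypercontractive when $\eps \gtrsim C_p^{-2p/(p-2)}$. The lemma, on the other hand, must hold for every $\eps \in (0,\eps_0)$, and this interval reaches down to $0$; the constraint $\eps < \eps_0 = (C_p^2/8)^{p/2}$ is an \emph{upper} bound on $\eps$ and therefore cannot supply the missing lower bound (for example, with $p=4$ and $C_p = 4$ the construction fails on the entire subinterval $(0, 4^{-4})$, which is nonempty and inside $(0,\eps_0)$). In short: with a degenerate base, the denominator $\E[\inprod{X}{v}^2]^{1/2}$ vanishes too fast as $\eps\to 0$ for hypercontractivity to survive.

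The paper's construction sidesteps this precisely by \emph{not} taking the base to be a point mass. It uses $\dist_0$ with each coordinate independently uniform on $\{-1,1\}$, and spikes a single coordinate $i^*$ by replacing $P$ with $P' = (1-\eps)P + \eps\,\mathrm{Unif}\{\pm 0.5\,C_p\eps^{-1/p}\}$. Then $\E[X_{i^*}^2] = 1 + s \geq 1$ stays bounded away from $0$, while the outlier atom contributes $\eps\cdot(0.5C_p\eps^{-1/p})^p = 2^{-p}C_p^p$ to the $p$-th moment — a quantity independent of $\eps$ — so $\E[X_{i^*}^p] \leq 1 + 2^{-p}C_p^p \leq C_p^p \leq C_p^p\,\E[X_{i^*}^2]^{p/2}$ for all $\eps \in (0,\eps_0)$ once $C_p > 2$. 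A hypercontractivity-of-products lemma (they cite \cite{KotSte17}) then carries this over to the $d$-dimensional $\dist_i$. If you want to keep your two-point framing, you would have to replace $\delta_{\0_d}$ by a base with unit per-coordinate variance (e.g., coordinatewise Rademacher or Gaussian) so the covariance becomes $\id_d + \Theta(\gamma)\,vv^\top$ rather than rank-one; the ePCA error then lands at the $\Theta(\gamma)$ scale rather than the $\Theta(1)$ scale of the rank-one instance, which is also what makes the bound tight rather than merely a constant-error separation. Finally, a minor technical note: with only two hypotheses and disjoint success events you obtain $p_1 + p_2 \le 1$, hence $\min(p_1,p_2) \le \frac 1 2$, which does not strictly contradict success ``with probability $\ge \frac 1 2$''; the paper's $d$-hypothesis packing yields $\min_i p_i \le 2/d < \frac 1 2$ and resolves this edge case cleanly.
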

\begin{proof}
    We exhibit $d+1$ mean-$\0_d$, $(p, C_p)$-hypercontractive distributions $\mathcal P = \{\dist_i\}_{0 \le i \le d}$.
    Moreover, all of the distributions $\{\dist_i\}_{i \in [d]} $ will have total variation distance $\le \eps$ from $\dist_0$.
    Finally, letting $\{\msig_i\}_{0 \le i \le d}$ be the corresponding covariances, our construction will have $\msig_0 = \id_d$, and $\msig_i = \mathbf{I}_d + s e_ie_i^\top$ for $s := \eps(0.25C_p^2\eps^{-\frac{2}{p}} - 1) = \Theta(C_p^2\eps^{1 - \frac{2}{p}}) = \Theta(\gamma)$, where we used $\eps \leq \eps_0$. 

    We now give our construction. 
    Let $P$ be the uniform distribution on $\{-1,1\}$ and let $P'$ be the distribution $(1-\eps) + \eps B$, where $B$ is the uniform distribution on $\{\pm 0.5C_p\eps^{-\frac{1}{p}}\}$. Observe that both $P$ and $P'$ are mean-zero and $(p,C_p)$-hypercontractive; indeed, for $X \sim P'$, $\E[X^2] = 1-\eps + 0.25\eps C_p^2\eps^{- \frac{2}{p}} = 1 + s$, while 
    \[\E[X^p] = (1-\eps) + \eps 2^{-p}C_p^p \eps^{-1} \leq C_p^p (\E[X^2])^{\frac p 2},\]
    where we use that $C_p \geq 2$ and $\E[X^2] \geq 1$.
    
    Define $\mathcal{D}_0$ to be the distribution on $\R^d$ such that coordinate is sampled independently from $P$.
    For each $i \in [d]$, define $\mathcal{D}_i$ to be the distribution on $\R^d$ such that all coordinates are sampled independently, the $i^{\text{th}}$ coordinate is $\sim P'$, and all other coordinates are $\sim P$.
    Since all $\mathcal{D}_i$ are product distributions, the arguments of \cite[Lemma 5.9]{KotSte17} imply that all $\mathcal{D}_i$ are also $(p,C_p)$-hypercontractive, and have the claimed covariances.
    Finally, the coupling formulation of total variation distance implies all $\{\dist_i\}_{i \in [d]}$ are at most $\eps$ far from $\mathcal{D}_0$ in the total variation distance.

    Thus, under these conditions, it is possible that the distribution of inliers is $\mathcal{D} = \mathcal{D}_{i^*}$ for some unknown $i^* \in [d]$, but the adversary corrupts the distribution to be $\mathcal{D}_0$.
    If the algorithm outputs a unit vector $\widehat{u}$ in this case, then it is an $\rho$-1-ePCA for $\mathcal{D}_{i^*}$ for $\rho = 1 - \frac{1 + s u_{i^*}^2}{1 + s} =  \frac{s (1 - u_{i^*}^2)}{1 + s}$.
    Importantly, the error $\rho \geq \frac{\min(1,s)}{4}$ if $u_{i^*}^2 \leq 1/4$.
    Since $\mathcal{D}_0$ contains no information about the unknown index $i^*$, we conclude that no algorithm can output a unit vector $u \in \R^d$ with $u_{i^*}^2 \geq \half$, for large enough $d$, and thus the ePCA approximation error is at least $c \gamma$ for a small constant $c$.
\end{proof}

\paragraph{Comparison to \cite{KonSKO20}.} As mentioned previously, the most comparable result to Theorem~\ref{thm:robust-k-epca-heavy-tailed} in the literature is due to Proposition 2.6 of \cite{KonSKO20}, which used a different set of distributional assumptions than hypercontractivity. The assumptions used in Proposition 2.6 of \cite{KonSKO20} are:\footnote{Proposition 2.6 of \cite{KonSKO20} assumes a stronger version of the first line in \eqref{eq:hypercontractivity_metalearn} where the inner product is taken against arbitrary rank-$k$ matrices with Frobenius norm $1$. However, inspection of their proof shows they only use the weaker \eqref{eq:hypercontractivity_metalearn}, i.e.\ bounded inner products against matrices of the form $\frac 1 {\sqrt k} \mmu\mmu^\top$, which they term ``semi-orthogonal.''}
\begin{equation}\label{eq:hypercontractivity_metalearn}
\begin{gathered}
    \frac 1 k \E_{x \sim \mathcal{D}}\bbb{\inner{\mmu\mmu^\top}{xx^{\top}-\msig}^{2}} \leq \nu(k)^{2} \text{ for all orthonormal } \mmu \in \R^{d \times k}, \\
    \text{and } \normop{xx^{\top}-\msig} \leq B \text{ with probability 1}. 
\end{gathered}
\end{equation}
For comparison, if $\dist$ is a $(4, C_4)$-hypercontractive distribution, 
\ba{
    \E_{x \sim \mathcal{D}}\bbb{\inner{vv^{\top}}{xx^{\top}}^{2}} \leq C_{4}^{2}\inner{vv^{\top}}{\msig}^{2} \text{ for all } v \in \R^d.
    \label{eq:hypercontractivity_ours}
}
The assumption \eqref{eq:hypercontractivity_metalearn} differs in that applies to rank-$k$ matrices (rather than rank-$1$), it enforces a uniform bound $\nu(k)$ (rather than a bound sensitive to $v$, which is tighter if the spectrum of $\msig$ is uneven), and enforces an almost sure bound, which does not follow in general for hypercontractive distributions. We now bound the parameters in \eqref{eq:hypercontractivity_metalearn} under the assumption \eqref{eq:hypercontractivity_ours} for fair comparison.

Let $\mmu \in \R^{d \times k}$ with columns $\{u_i\}_{i \in [k]}$ be orthonormal, and suppose that \eqref{eq:hypercontractivity_ours} holds. Then,
\begin{equation*}
\begin{aligned}
\E_{x \sim \mathcal{D}}\bbb{\inner{\mmu\mmu^{\top}}{xx^{\top}-\msig}^{2}} &= \E_{x \sim \mathcal{D}}\bbb{\inner{\mmu\mmu^{\top}}{xx^{\top}}^{2}-2\inner{\mmu\mmu^{\top}}{xx^{\top}}\inner{\mmu\mmu^{\top}}{\msig} + \inner{\mmu\mmu^{\top}}{\msig}^{2}} \\
&= \E_{x \sim \mathcal{D}}\bbb{\inner{\sum_{i\in[k]} u_iu_i^\top}{xx^{\top}}^{2}} - \inner{\mmu\mmu^{\top}}{\msig}^{2} \\
&= \sum_{i\in[k]} \sum_{j\in[k]} \E_{x \sim \mathcal{D}}\bbb{\inner{u_i}{x_i}^2 \inner{u_j}{x_j}^2} - \inner{\mmu\mmu^{\top}}{\msig}^{2} \\
&\le \sum_{i\in[k]} \sum_{j\in[k]} \sqrt{\E_{x \sim \mathcal{D}}\bbb{\inner{u_i}{x_i}^4}\E_{x \sim \mathcal{D}}\bbb{\inner{u_j}{x_j}^4}}  - \inner{\mmu\mmu^{\top}}{\msig}^{2} \\
&= \bb{\sum_{i\in[k]} \sqrt{\E_{x \sim \mathcal{D}}\bbb{\inner{u_i}{x_i}^4}}}^2  - \inner{\mmu\mmu^{\top}}{\msig}^{2} \\
&\le \bb{\sum_{i\in [k]} C_4 \inner{u_iu_i^\top}{\msig}}^2 - \inner{\mmu\mmu^{\top}}{\msig}^{2} \\
&= \bb{C_4^2-1}\inner{\mmu\mmu^\top}{\msig}^2 \le \Par{C_4^2 - 1}\norm{\msig}_k^2.
\end{aligned}
\end{equation*}
The first inequality used the Cauchy-Schwarz inequality and the second inequality used \eqref{eq:hypercontractivity_ours}. It follows that
the value of $\nu(k)$ that can be derived in~\eqref{eq:hypercontractivity_metalearn} satisfies 
\[\nu(k) \le \sqrt{C_4^2 - 1} \cdot \frac{\norm{\msig}_k}{\sqrt k}.\]
This bound is tight up to constant factors, as witnessed by the hypercontractive mixture distribution $\half \Nor(\0_d, \half \id_d) + \half \Nor(\0_d, \frac 3 2 \id_d)$, whose covariance matrix is $\msig = \id_d$ and whose hypercontractive parameter in \eqref{eq:hypercontractivity_ours} is a constant bounded away from $1$, since the fourth moment of a standard Gaussian is $3$. In this case, we claim that for any orthonormal matrix $\mmu \in \R^{d \times k}$, $\inprod{\mmu\mmu^\top}{xx^\top}$ deviates from its expectation by $\Omega(k)$ with constant probability, which is enough to lower bound $\nu(k)$ by $\Omega(\sqrt k)$ by using the definition \eqref{eq:hypercontractivity_metalearn}, since the left-hand side is $\frac 1 k$ times the variance of $\inprod{\mmu\mmu^\top}{xx^\top}$. The expectation of $\inprod{\mmu\mmu^\top}{xx^\top}$ is simply $k$. On the other hand, if $x \sim \Nor(\0_d, \frac 3 2 \id_d)$ (which happens with probability $\half$), the Hanson-Wright inequality implies that with high constant probability, we have $|x^\top \mmu\mmu^\top x - \frac 3 2 k| \le \frac 1 3 k$, so $x^\top \mmu\mmu^\top x \ge \frac 7 6 k$. Therefore, the variance of $\inprod{\mmu\mmu^\top}{xx^\top}$ is indeed $\Omega(k^2)$, so $\nu(k) = \Omega(\sqrt k)$, matching the above inequality up to a constant factor.

Next, by Proposition 2.6 of \cite{KonSKO20}, the output $\widetilde{\mmu}$ of their proposed algorithm satisfies
\bas{
\norm{\msig}_k - \inner{\widetilde{\mmu}\widetilde{\mmu}^\top}{\msig} = O\bb{\epsilon \norm{\msig}_k +\nu(k) \sqrt{k\epsilon}} = O\bb{\gamma}\norm{\msig}_k,
}
where $\gamma = \sqrt{\epsilon}$, which is implied by the upper bound in Theorem~\ref{thm:robust-k-epca-heavy-tailed} up to constant factors for any $C_4 = \Theta(1)$. Next, using the clipping argument in Lemma~\ref{lemma:clippingclose} for $B := C_{4}^{2} \cdot \frac{\Tr\msig }{\epsilon}$, under $\eqref{eq:hypercontractivity_ours}$, $\Pr(\normop{xx^{\top}} \geq B) \leq \epsilon$.\footnote{This is the threshold at which point the clipped samples can be treated as outliers.}
Therefore, the sample complexity used by Proposition 2.6 of \cite{KonSKO20} is 
\bas{
    n &= O\bb{\bb{dk^{2} + \frac{B}{\nu\bb{k}}\sqrt{k\epsilon}}\frac{\log\bb{\frac{d}{\delta\epsilon}}}{\epsilon}} \\
    &= O\bb{\bb{\frac{dk^{2}}{\epsilon} + \frac{\Tr\bb{\msig}}{\norm{\msig}_{k}} \frac{k}{\epsilon^{1.5}}}\log\bb{\frac{d}{\delta\epsilon}}},
}
for $C_4 = \Theta(1)$, which is worse than the sample complexity required in Theorem~\ref{thm:robust-k-epca-heavy-tailed} in the dependence on $k$. For example, in the reasonably well-conditioned regime where $\Tr(\msig) = \Theta(\frac d k \cdot \norms{\msig}_k)$, the respective sample complexities of \cite{KonSKO20} and our algorithm for the same estimation rate are
\[O\Par{\Par{\frac{dk^2}{\eps} + \frac{d}{\eps^{1.5}}} \cdot \log\Par{\frac d {\delta\eps}}},\;\;\; O\Par{\frac{d\log d + \log\frac 1 \delta}{\eps^{1.5}}}. \]

\subsection{Online heavy-tailed PCA}\label{ssec:htoja}

In this section, we provide a $k$-cPCA algorithm for heavy-tailed data without adversarial corruptions in an online setting, as a proof-of-concept application of our cPCA reduction. More precisely, we let $X_{1}, X_{2}, \ldots X_{n} \sim \mathcal{D}$ be i.i.d.\ draws from a $\bb{p,C_{p}}$-hypercontractive distribution, $\dist$, with covariance matrix $\mathbf{\Sigma}$. Our goal is to perform an approximate $k$-cPCA of $\msig$, in the setting where the samples arrive online, i.e.\ we are limited to using $O(kd)$ space, where $k \ll d$ but $n$ is potentially $\gg d$. 

We first establish a helper result, Lemma~\ref{lemma:clipping_cpca_bound}, which shows that an approximate $k$-cPCA of $\mathbf{\Sigma} \in \PSD^{d \times d}$ is also an approximate $k$-cPCA of $\hmsig \in \PSD^{d \times d}$ provided $\mathbf{\Sigma}$ and $\hmsig$ are sufficiently close in operator norm. Our result follows from standard eigenvalue perturbation bounds from the literature.

\begin{lemma}[cPCA perturbation]
    \label{lemma:clipping_cpca_bound}
Let $\mathbf{\Sigma}, \widehat{\mathbf{\Sigma}} \in \PSD^{d \times d}$ satisfy $\normsop{\mathbf{\Sigma} - \widehat{\mathbf{\Sigma}}} \leq \rho$. Let $\mmu \in \R^{d \times k}$ be a $(\delta, \gamma)$-$k$-cPCA of $\widehat{\mathbf{\Sigma}}$ for $\max\left\{\delta, \gamma\right\} \leq \frac{1}{10}$. Then for $\rho < \frac{\gamma \lambda_k(\mathbf{\Sigma})}{2}$, the following hold.
    \begin{enumerate}
        \item $\mmu$ is a $(\Delta, \Gamma)$-$k$-cPCA of $\mathbf{\Sigma}$ with $\Delta := 8k\Pars{\frac{\rho}{\gamma\lambda_k(\mathbf{\Sigma})}}^{2}+2\delta$, $\Gamma := 2\gamma$.
        \item $\kappa_{k}\Pars{\widehat{\msig}} \leq 2\kappa_{k}\Pars{\msig}$.
    \end{enumerate} 
\end{lemma}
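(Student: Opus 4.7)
The plan has two parts. \textbf{Part 2} follows directly from Weyl's inequality (\Cref{fact:weyl}): since $\rho < \gamma\lambda_{k}\bb{\msig}/2 \leq \lambda_{k}\bb{\msig}/20$ (using $\gamma \leq 1/10$), we have $\lambda_{k}\bb{\hmsig} \geq \lambda_{k}\bb{\msig} - \rho \geq \tfrac{19}{20}\lambda_{k}\bb{\msig}$ and $\lambda_{1}\bb{\hmsig} \leq \lambda_{1}\bb{\msig} + \rho \leq \tfrac{21}{20}\lambda_{1}\bb{\msig}$ (using $\lambda_{k}\bb{\msig} \leq \lambda_{1}\bb{\msig}$). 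Taking the ratio gives $\kappa_{k}\bb{\hmsig} \leq \tfrac{21}{19}\kappa_{k}\bb{\msig} \leq 2\kappa_{k}\bb{\msig}$.

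For \textbf{Part 1}, let $\mw := \mv^{<(1-2\gamma)\lambda_{k}\bb{\msig}}\bb{\msig}$; the goal is to bound $\normf{\mw^\top\mmu}^2 \leq \Delta$. Set $\widehat{\ml} := \mv^{\geq (1-\gamma)\lambda_{k}\bb{\hmsig}}\bb{\hmsig}$ and $\widehat{\ms} := \mv^{<(1-\gamma)\lambda_{k}\bb{\hmsig}}\bb{\hmsig}$; by the cPCA hypothesis, $\normf{\widehat{\ms}^\top\mmu}^2 \leq \delta$. Using the resolution of identity $\id_d = \widehat{\ml}\widehat{\ml}^\top + \widehat{\ms}\widehat{\ms}^\top$ and \Cref{fact:trace_cs}, together with $\normf{\widehat{\ml}^\top\mmu}^2 \leq \normf{\mmu}^2 = k$ and $\normf{\mw^\top\widehat{\ms}\widehat{\ms}^\top\mmu} \leq \normf{\widehat{\ms}^\top\mmu}$, we obtain
\bas{
\normf{\mw^\top\mmu}^2 \leq 2\normf{\mw^\top\widehat{\ml}\widehat{\ml}^\top\mmu}^2 + 2\normf{\mw^\top\widehat{\ms}\widehat{\ms}^\top\mmu}^2 \leq 2k\normop{\mw^\top\widehat{\ml}}^2 + 2\delta.
}
It therefore suffices to establish $\normop{\mw^\top\widehat{\ml}} \leq 2\rho/(\gamma\lambda_{k}\bb{\msig})$.

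For this central step I appeal to the Davis--Kahan $\sin\Theta$ theorem: for symmetric $\ma,\tilde{\ma}$ with $\normop{\ma - \tilde{\ma}} \leq \rho$, if orthonormal matrices $\mv, \tilde{\mv}$ span eigenvector subspaces of $\ma, \tilde{\ma}$ whose eigenvalues are supported in disjoint intervals separated by at least $\eta$, then $\normop{\mv^\top\tilde{\mv}} \leq \rho/\eta$. By Weyl and $\rho < \gamma\lambda_{k}\bb{\msig}/2$, $\lambda_{k}\bb{\hmsig} \geq (1-\gamma/2)\lambda_{k}\bb{\msig}$, so eigenvalues of $\hmsig$ on $\linspan\bb{\widehat{\ml}}$ are $\geq (1-3\gamma/2)\lambda_{k}\bb{\msig}$, while eigenvalues of $\msig$ on $\linspan\bb{\mw}$ are $< (1-2\gamma)\lambda_{k}\bb{\msig}$; the spectra are thus separated by $\eta \geq \gamma\lambda_{k}\bb{\msig}/2$. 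Applying Davis--Kahan with $\ma = \hmsig$, $\tilde{\ma} = \msig$, $\mv = \widehat{\ml}$, $\tilde{\mv} = \mw$ gives $\normop{\mw^\top\widehat{\ml}} \leq 2\rho/(\gamma\lambda_{k}\bb{\msig})$. Substituting into the previous display yields $\normf{\mw^\top\mmu}^2 \leq 2\delta + 8k\bb{\rho/(\gamma\lambda_{k}\bb{\msig})}^2 = \Delta$, as required. The main obstacle is justifying the Davis--Kahan bound on $\normop{\mw^\top\widehat{\ml}}$ (which can alternatively be derived in-house via the Sylvester identity $\mw^\top\widehat{\ml}\widehat{\mlam} - \mlam_{\mw}\mw^\top\widehat{\ml} = \mw^\top(\hmsig-\msig)\widehat{\ml}$ and its integral representation after recentering); everything else is a straightforward decomposition combined with Weyl-type perturbation estimates.
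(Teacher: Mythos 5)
Your proof is correct and follows essentially the same route as the paper: you identically decompose $\normf{\mw^\top\mmu}^2$ via $\id_d = \hml\hml^\top + \hms\hms^\top$ and Fact~\ref{fact:trace_cs} into $2k\normop{\mw^\top\hml}^2 + 2\delta$, then bound the cross-overlap term by an eigenvector perturbation theorem using the same Weyl-based separation $\gamma\lambda_k(\msig)/2$. The only cosmetic difference is that you invoke Davis--Kahan $\sin\Theta$ where the paper cites the gap-free Wedin theorem (Lemma~B.3 of \cite{allen2016lazysvd}); these are interchangeable here since both give precisely $\normop{\mw^\top\hml} \le \rho/\eta$ under a spectral separation of $\eta$ between the relevant eigenspaces of $\msig$ and $\hmsig$.
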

\begin{proof}
Let $\widehat{\ms} := \widehat{\mv}^{\le (1 - \gamma)\lam_{k}(\widehat{\mathbf{\Sigma}}) }(\widehat{\mathbf{\Sigma}})$ denote the eigenspace of $\widehat{\mathbf{\Sigma}}$ with eigenvalues  $\le (1 - \gamma)\lam_{k}(\widehat{\mathbf{\Sigma}})$ and let $\widehat{\ml}$ denote its complement subspace. Similarly, define $\ms := \mv^{\le (1 - \Gamma)\lam_{k}(\mathbf{\Sigma}) }(\mathbf{\Sigma})$ and let $\ml$ denote its complement subspace. By definition of $\mmu$, we have $\normsf{\hms^\top \mmu}^2 = \Tr(\mmu^\top\hms\hms^\top\mmu) \le \delta$. 
Hence,
\ba{
    \normsf{\ms^\top\mmu}^2 = \Tr\bb{\mmu^{\top}\ms\ms^{\top}\mmu} 
    &= \Tr\bb{\mmu^{\top}\bb{\hml\hml^{\top} + \hms\hms^{\top}}\ms\ms^{\top}\bb{\hml\hml^{\top} + \hms\hms^{\top}}\mmu} \notag \\
    &= \Tr\bb{\bb{\mmu^{\top}\hml\hml^{\top}\ms + \mmu^{\top}\hms\hms^{\top}\ms}\bb{\ms^{\top}\hml\hml^{\top}\mmu + \ms^{\top}\hms\hms^{\top}\mmu}} \notag \\
    &\leq 2\Tr\bb{\mmu^{\top}\hml\hml^{\top}\ms\ms^{\top}\hml\hml^{\top}\mmu} + 2\Tr\bb{\mmu^{\top}\hms\hms^{\top}\ms\ms^{\top}\hms\hms^{\top}\mmu}, \text{ using Fact}~\ref{fact:trace_cs} \notag \\
    &= 2\Tr\bb{\hml^{\top}\ms\ms^{\top}\hml\hml^{\top}\mmu\mmu^{\top}\hml}
    +
    2\Tr\bb{\ms\ms^{\top}\hms\hms^{\top}\mmu\mmu^{\top}\hms\hms^{\top}} \notag \\
    &\leq 2\normop{\hml^{\top}\ms\ms^{\top}\hml}\Tr\bb{\hml^{\top}\mmu\mmu^{\top}\hml}
    + 2\normop{\ms\ms^{\top}}\Tr\bb{\hms^{\top}\hms\hms^{\top}\mmu\mmu^{\top}\hms} \notag \\
    &\leq 2\normop{\ms^{\top}\widehat{\ml}}^2\Tr\bb{\hml\hml^{\top}\mmu\mmu^{\top}}
    + 2\normop{\ms\ms^{\top}}\Tr\bb{\hms^{\top}\mmu\mmu^{\top}\hms} \notag \\
    &\leq 2\normop{\ms^{\top}\widehat{\ml}}^2 \normop{\hml\hml^{\top}} \Tr\bb{\mmu\mmu^{\top}}
    + 2\normop{\ms\ms^{\top}}\normf{\hms^{\top}\mmu}^2 \notag \\
    &\le 2k\normop{\ms^{\top}\widehat{\ml}}^2 + 2\delta, \text{ using the cPCA guarantee.} \label{eq:clipping_cpca_bound_1} }
Further, using Fact~\ref{fact:weyl},
\ba{
    \left|\lambda_{k}\Pars{\hmsig} - \lambda_{k}\bb{\msig}\right| \leq \normop{\msig - \hmsig} \leq \rho. \label{eq:weyl_eigenvalue_bound}
}
Therefore for $\Gamma = 2\gamma$, 
\bas{
     (1 - \gamma)\lam_{k}(\mathbf{\widehat{\Sigma}}) - (1 - \Gamma)\lam_{k}(\mathbf{\Sigma})
     &= \gamma \lambda_{k}\bb{\mathbf{\Sigma}} + \bb{1-\gamma}\bb{\lam_{k}(\mathbf{\widehat{\Sigma}}) - \lam_{k}(\mathbf{\Sigma})} \\
     &\geq \gamma \lambda_{k}\bb{\mathbf{\Sigma}} - \rho\bb{1-\gamma}, \text{ using}~\eqref{eq:weyl_eigenvalue_bound} \\
     &\geq \frac{\gamma \lambda_{k}\bb{\mathbf{\Sigma}}}{2}, \text{ using } \rho \leq \frac{\gamma \lambda_{k}\bb{\mathbf{\Sigma}}}{2}.
}
Hence, by applying the gap-free Wedin theorem (Lemma B.3, \cite{allen2016lazysvd}), we have
\ba{
    \normop{\ms^{\top}\widehat{\ml}} \leq \frac{\rho}{(1 - \gamma)\lam_{k}(\widehat{\mathbf{\Sigma}}) - (1 - \Gamma)\lam_{k}(\mathbf{\Sigma}) } \le \frac{2\rho}{\gamma \lambda_k(\mathbf{\Sigma})}. \label{eq:mstop_hml_bound}
}
Combining \eqref{eq:clipping_cpca_bound_1} and \eqref{eq:mstop_hml_bound} yields the first claim, as desired:
\bas{
\normsf{\ms^\top\mmu}^2 \le \frac{8k\rho^2}{\gamma^2\lambda_k(\mathbf{\Sigma})^2}+2\delta.
}
Finally, for the claim regarding $\kappa_{k}\Pars{\widehat{\msig}}$, we have by two applications of Fact~\ref{fact:weyl}:
\bas{
    \kappa_{k}\Pars{\widehat{\msig}} &= \frac{\lambda_{1}\Pars{\widehat{\msig}}}{\lambda_{k}\Pars{\widehat{\msig}}} 
    \leq \frac{\lambda_{1}\bb{\msig} + \Abs{\lambda_{1}\bb{\msig}-\lambda_{1}\Pars{\widehat{\msig}}}}{\lambda_{k}\bb{\msig} - \Abs{\lambda_{k}\bb{\msig}-\lambda_{k}\Pars{\widehat{\msig}}}} \leq \frac{\lambda_{1}\bb{\msig} + \frac{\gamma \lambda_{k}\bb{\mathbf{\Sigma}}}{2}}{\lambda_{k}\bb{\msig} - \frac{\gamma \lambda_{k}\bb{\mathbf{\Sigma}}}{2}} \leq \frac{1+\frac{\gamma}{2}}{1-\frac{\gamma}{2}}\kappa_{k}\bb{\msig} \le 2\kappa_k(\msig).
}
\end{proof}
We specify a choice of $\rho$ for convenient application of Lemma \ref{lemma:clipping_cpca_bound} in the following.

\begin{corollary}
    \label{cor:clipping_cpca_bound} In the setting of Lemma \ref{lemma:clipping_cpca_bound}, if $\rho := \sqrt{\frac{\delta}{8k}}\gamma\lambda_{k}\bb{\msig}$, $\mmu$ is a $(3\delta, 2\gamma)$-$k$-cPCA of $\mathbf{\Sigma}$.
\end{corollary}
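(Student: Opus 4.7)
The plan is to observe that Corollary~\ref{cor:clipping_cpca_bound} follows by direct substitution of the specified $\rho$ into the conclusion of Lemma~\ref{lemma:clipping_cpca_bound}, so the only work is a two-line calculation together with verifying the hypothesis of Lemma~\ref{lemma:clipping_cpca_bound} continues to hold at this choice of $\rho$.

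First I would verify the hypothesis $\rho < \tfrac{\gamma \lambda_k(\msig)}{2}$ required by Lemma~\ref{lemma:clipping_cpca_bound}. Plugging in the stated choice $\rho = \sqrt{\delta/(8k)}\,\gamma\lambda_k(\msig)$, this reduces to the inequality $\sqrt{\delta/(8k)} < 1/2$, i.e., $\delta < 2k$. Since the lemma assumes $\delta \le \tfrac{1}{10}$ and $k \ge 1$, this holds comfortably, so Lemma~\ref{lemma:clipping_cpca_bound} applies.

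Next I would compute the resulting $\Delta$. By Lemma~\ref{lemma:clipping_cpca_bound}, $\mmu$ is a $(\Delta, 2\gamma)$-$k$-cPCA of $\msig$ with
\[
\Delta = 8k\Pars{\frac{\rho}{\gamma\lambda_k(\msig)}}^{2} + 2\delta
= 8k \cdot \frac{\delta}{8k} + 2\delta = 3\delta,
\]
where we used that $\rho^2 = \frac{\delta}{8k}\,\gamma^2\lambda_k(\msig)^2$ by the definition of $\rho$. This gives the desired $(3\delta, 2\gamma)$-$k$-cPCA guarantee. No step presents any real obstacle, since this corollary is merely an explicit parameter choice optimizing the tradeoff between the two contributions to $\Delta$ in Lemma~\ref{lemma:clipping_cpca_bound} (balancing the perturbation term $8k\rho^2/(\gamma\lambda_k(\msig))^2$ against the original error $2\delta$ up to constants).
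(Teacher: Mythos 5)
Your proof is correct and matches the paper's intent: the corollary is a direct substitution of $\rho = \sqrt{\delta/(8k)}\,\gamma\lambda_k(\msig)$ into the conclusion of Lemma~\ref{lemma:clipping_cpca_bound}, giving $\Delta = 8k\cdot\frac{\delta}{8k} + 2\delta = 3\delta$, and your check that $\rho < \tfrac{\gamma\lambda_k(\msig)}{2}$ (i.e.\ $\delta < 2k$, implied by $\delta \le \tfrac{1}{10}$) is exactly what is needed. The paper states this without proof because it is the same trivial calculation.
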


Corollaries~\ref{cor:clipping-and-truncation} and~\ref{cor:clipping_cpca_bound} make our roadmap clear for heavy-tailed PCA. We first use Corollary~\ref{cor:clipping-and-truncation} to devise a clipping threshold for our data such that the clipped covariance matrix is close in operator norm to the original distribution's covariance matrix. We then perform approximate $k$-cPCA on the clipped data, analyzed using our reduction in Theorem~\ref{thm:final_composition}, and finally use Corollary~\ref{cor:clipping_cpca_bound} to convert the resulting $k$-cPCA on the clipped covariance to a corresponding $k$-cPCA on the original covariance.

To demonstrate an instantiation of this strategy, we propose an online gap-free $k$-cPCA algorithm for heavy-tailed data using Oja's algorithm~\cite{oja1982simplified} as the $1$-cPCA oracle. More specifically, we use the following guarantee on Oja's algorithm from \cite{allen2017first}.

\begin{proposition}[Theorem 2, \cite{allen2017first})]
\label{prop:oja_1cpca_oracle}
There is an algorithm $\Oja$ with the following guarantee. Let $\delta, \gamma \ge 0$ such that $\max\bb{\delta, \gamma} \le \frac{1}{10}$ and $\delta \le \frac{\gamma^2}{256}$. Let $d \in \N$, $R > 0$, and $\beta \in (0,1)$. 
Let $n = \Omega(\frac{d}{\delta\gamma^{2}}\log(\frac{d}{\beta}))$, and let $\{X_i\}_{i \in [n]} \in \R^d$ be drawn i.i.d.\ from $\dist$, a distribution on $\R^d$ with mean $\0_d$ and covariance $\msig$, where $\norm{X}_2^2 \le R$ almost surely for $X \sim \dist$. Then, with probability at least $1-\beta$, $\Oja(\delta, \gamma, \{X_i\}_{i \in [n]}, R)$ returns a $(\delta, \gamma)$-cPCA of $\mm$, using $O(d)$ space.

\end{proposition}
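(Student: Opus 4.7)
The plan is to analyze Oja's algorithm, which maintains a unit vector $w_t \in \R^d$ initialized as $w_0 \sim \Nor(\0_d, \id_d)$ (rescaled to unit norm) and performs the update $w_t \propto (\id_d + \eta X_t X_t^\top) w_{t-1}$ followed by normalization. Since normalization preserves direction, it suffices to analyze the unnormalized matrix product $B_n \defeq \prod_{t=1}^n (\id_d + \eta X_t X_t^\top)$ for a fixed step size $\eta$ to be tuned. Letting $\ms \defeq \mv^{<(1-\gamma)\lam_1(\msig)}(\msig)$ and $\ml \defeq \ms_\perp$, the cPCA guarantee reduces to showing that
\[
\frac{\norm{\ms^\top B_n w_0}_2^2}{\norm{B_n w_0}_2^2} \le \delta
\]
with probability at least $1 - \beta$, where the denominator can be further lower bounded by $\norm{\ml^\top B_n w_0}_2^2$.

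The first main step is to derive second-moment recursions for $\E\norm{\mv^\top B_t w_0}_2^2$ with $\mv \in \{\ms, \ml\}$. Conditioning on $B_{t-1}$, a direct expansion of $\E_{X_t}[(\id_d + \eta X_t X_t^\top)\mv\mv^\top(\id_d + \eta X_t X_t^\top)]$, combined with the bound $\norm{X_t}_2^2 \le R$ to control the $\eta^2$-order quartic terms, shows that the $\ms$-projection grows at a rate at most $(1 + \eta(1-\gamma)\lam_1(\msig) + O(\eta^2 R \lam_1(\msig)))$ per step, while the $\ml$-projection grows at a rate at least $(1 + \eta\lam_1(\msig)(1 - O(\eta R)))$. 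Tuning $\eta = \Theta(\delta\gamma/R)$ exposes an effective multiplicative gap of $\approx(1 + \eta\gamma\lam_1(\msig))$ per step, so that $\E\norm{\ms^\top B_n w_0}_2^2$ becomes exponentially smaller than $\E\norm{\ml^\top B_n w_0}_2^2$ after $n \approx d/(\delta\gamma^2)$ steps. The random initialization supplies the warm start $\norm{\ml^\top w_0}_2^2 \gtrsim 1/d$ with probability $1 - \beta/3$ via standard Gaussian anti-concentration.

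The main obstacle is upgrading these in-expectation bounds to high-probability statements in the gap-free regime, since $\ms$ and $\ml$ are separated only by the soft margin $\gamma$ rather than by an actual eigenvalue gap, so standard spectral-perturbation arguments do not apply. The plan is to apply a martingale concentration argument to the log-ratio $\log\norm{\ms^\top B_t w_0}_2^2 - \log\norm{\ml^\top B_t w_0}_2^2$: using $\norm{X_t}_2^2 \le R$, each increment is bounded and has conditional quadratic variation $O(\eta^2 R^2)$, so a Freedman-type inequality yields concentration within $O(\sqrt{n\eta^2 R^2 \log(d/\beta)})$ of the mean. Combining this with the multiplicative gap from the second-moment analysis and the warm-start bound produces the sample complexity $n = \tilde{\Omega}(d/(\delta\gamma^2))$, with the extra $\log(d/\beta)$ factor coming from the concentration tail. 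The $O(d)$-space guarantee is immediate since Oja's algorithm only stores the running vector $w_t$ and processes each sample in a single pass.
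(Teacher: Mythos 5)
The paper does not actually prove this proposition; it is imported verbatim as Theorem~2 of \cite{allen2017first}, so there is no internal proof to compare against. Evaluating your sketch on its own merits, the overall architecture (analyze the unnormalized matrix product $B_n$, split along $\ms\oplus\ml$, Gaussian warm start, then a two-phase expectation-plus-concentration argument) is the right shape, but there are two concrete gaps that would break the argument.

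First, the claimed per-step growth rate for $\norm{\ms^\top B_t w_0}_2^2$ --- that it increases by at most $1 + \eta(1-\gamma)\lam_1(\msig) + O(\eta^2 R\lam_1(\msig))$ --- ignores cross-block leakage. The update matrix $\id_d + \eta X_t X_t^\top$ is \emph{not} block-diagonal with respect to $\ms\oplus\ml$ (since $X_t X_t^\top \neq \msig$); the $\eta^2$ term of the second-moment recursion contains $\eta^2\,\E\bigl[\ms^\top X_t X_t^\top\,B_{t-1}w_0 w_0^\top B_{t-1}^\top\,X_t X_t^\top\ms\bigr]$, which scales with the \emph{total} mass $\norm{B_{t-1}w_0}_2^2$, not just the $\ms$-mass. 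This leakage is exactly what prevents the ratio from being driven to zero and is what forces the error floor $\delta$ and the step-size tuning; omitting it, the recursion you wrote would give a contraction all the way to zero, which is false. A correct argument must balance the multiplicative gap $\eta\gamma\lam_1(\msig)$ against this additive leakage, and it is this balance (not the gap alone) that produces the stated sample complexity.

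Second, the Freedman-type concentration on the increments of $\log\norm{\ms^\top B_t w_0}_2^2 - \log\norm{\ml^\top B_t w_0}_2^2$ does not have bounded differences or controllable conditional variance in the regime you need: precisely when $\norm{\ms^\top B_t w_0}_2$ is small (which is where the algorithm should end up), the log increment can be unboundedly negative, and the cross-term leakage from the first point can make a single step blow the quantity \emph{up} by a factor far exceeding $e^{O(\eta R)}$ when the base is tiny. The proof in \cite{allen2017first} sidesteps this by working with a different bounded potential (ratios of quadratic forms that remain uniformly controlled), proving contraction lemmas on stopped processes and coupling to an auxiliary multiplicative random walk; your sketch does not supply a substitute for that machinery. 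These two gaps are genuine, not cosmetic; filling them is essentially the entire technical content of the cited result.
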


As a consequence of Proposition~\ref{prop:oja_1cpca_oracle} and our roadmap described earlier, we conclude with the following.

\begin{theorem}\label{thm:oja_k}
For an even integer $p \ge 4$, let $\dist$ be $(p, C_p)$-hypercontractive on $\R^d$ with mean $\0_d$ and covariance $\msig$. Let $(\Delta, \Gamma) \in (0, 1)$, assume $\Delta \cdot \kappa_k(\msig)^2 \le \Gamma^2$, and set $\delta = \frac{1}{k^{\Theta(\log k)}} \cdot \Delta$, $\gamma = \frac{1}{\Theta(k^3)} \cdot \Gamma$, for appropriate constants. Let 
\[\alpha := \Par{\frac{C_{p}^{2}\kappa_{k}\bb{\msig}\sqrt{k}}{\Gamma\sqrt{\Delta}}}^{\frac{1}{p-2}},\; R \defeq \Theta(\alpha \Tr\bb{\msig}).\]
Let $\beta \in (0, 1)$. If $n = \Theta(\alpha\frac{d \kappa_{k}\bb{\msig}^{2}}{\delta\gamma^{2}}\log(\frac{d}{\beta}))$ for an appropriate constant, $\BBPCA$ (Algorithm \ref{alg:bbpca}) using $\Oja$ as a $(\delta, \gamma)$-$1$-cPCA oracle on $n$ samples from $\mathcal{T}_R(\dist)$ returns a $(\Delta, \Gamma)$-$k$-cPCA of $\msig$ with probability $\ge 1 - \beta$, in $O(dk)$ space.
\end{theorem}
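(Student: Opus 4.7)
The plan is to execute the three-step reduction sketched in the paragraphs immediately before the statement: clip the heavy-tailed samples so that the clipped covariance $\hmsig \defeq \E_{x \sim \mathcal{T}_R(\dist)}[xx^\top]$ is close to $\msig$ in operator norm, apply Theorem~\ref{thm:final_composition} to $\hmsig$ using $\Oja$ as the $(\delta,\gamma)$-$1$-cPCA oracle, and finally use Corollary~\ref{cor:clipping_cpca_bound} to transfer the resulting $k$-cPCA guarantee from $\hmsig$ back to $\msig$.

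For the first step, I will apply Corollary~\ref{cor:clipping-and-truncation} with the probability parameter $\rho$ chosen small enough that the resulting operator-norm bound $\normsop{\hmsig - \msig} \le \rho\normop{\msig}$ feeds into the precondition of Corollary~\ref{cor:clipping_cpca_bound} (i.e.\ $\normsop{\hmsig-\msig} \lesssim \sqrt{\delta/k}\,\gamma\lambda_k(\msig)$). Back-solving the radius constraint $R \gtrsim (C_p^p/\rho)^{2/(p-2)}\Tr(\msig)$ with the stipulated values of $\delta$ and $\gamma$ recovers the stated $R = \Theta(\alpha \Tr(\msig))$, and also ensures (via the tail bound in Corollary~\ref{cor:clipping-and-truncation}) that only a negligible fraction of samples are actually clipped. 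For the second step, I will verify the hypothesis $\Delta \cdot \kappa_k(\hmsig)^2 = O(\Gamma^2)$ of Theorem~\ref{thm:final_composition} using the bound $\kappa_k(\hmsig) \le 2\kappa_k(\msig)$ from the second part of Lemma~\ref{lemma:clipping_cpca_bound}, and then invoke $\BBPCA$ with $\oopca \defeq \Oja$. Each of the $k$ invocations runs on a deflated matrix $\mproj_{i-1}\hmsig\mproj_{i-1}$, and the corresponding projected clipped samples still lie in the ball of radius $\sqrt R$ because orthogonal projection is norm-contracting; Proposition~\ref{prop:oja_1cpca_oracle} therefore applies directly. Splitting the $n$ samples into $k$ independent streams and union-bounding with per-stream failure probability $\beta/k$ yields a $(\delta,\gamma)$-$k$-cPCA of $\hmsig$ with the desired high probability; the stated per-call sample budget absorbs both this union bound and the $\kappa_k(\msig)^2$ overhead that arises when expressing Oja's guarantee in terms of $\lambda_k$ of the deflated matrix rather than the original.

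For the third step, Corollary~\ref{cor:clipping_cpca_bound} directly converts the output $\mmu$ into a $(3\delta,2\gamma)$-$k$-cPCA of $\msig$, which by the stipulated $\delta = \Delta/k^{\Theta(\log k)}$ and $\gamma = \Gamma/\Theta(k^3)$ dominates the target $(\Delta,\Gamma)$. Union-bounding the two probabilistic events (few samples get clipped; all $k$ Oja calls succeed) gives overall success probability $\ge 1-\beta$. The $O(dk)$ space bound is immediate: $\BBPCA$ stores only the $k$ output unit vectors, and each $\Oja$ invocation discards its $O(d)$ working state upon return.

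The main obstacle is not conceptual but is the parameter bookkeeping. The specific forms of $\alpha$ and of the $\kappa_k(\msig)^2$ factor inside $n$ arise by simultaneously tracking three coupled constraints: the clipping radius needed for $\hmsig$ to be sufficiently close to $\msig$, the sample complexity of each $\Oja$ call (which implicitly depends on the effective ratio between the sample-norm bound $\sqrt R$ and $\lambda_k$ of the deflated matrix), and the $\log k$ factor from union-bounding over $k$ calls. Fitting each of these into the $k^{\Theta(\log k)}$ and $\Theta(k^3)$ slack that the theorem's choice of $\delta, \gamma$ affords, while respecting the gap-free assumption $\Delta\cdot\kappa_k(\msig)^2 \le \Gamma^2$ required by Theorem~\ref{thm:final_composition}, is where the careful calculation lies.
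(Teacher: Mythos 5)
Your proposal takes the same three-step approach as the paper: clip via Corollary~\ref{cor:clipping-and-truncation} so that $\msig' := \E_{x \sim \mathcal{T}_R(\dist)}[xx^\top]$ is $\rho$-close to $\msig$ in operator norm, run $\BBPCA$ with $\Oja$ as the $(\delta,\gamma)$-$1$-cPCA oracle and invoke Theorem~\ref{thm:final_composition} to certify a $k$-cPCA of $\msig'$, and transfer back to $\msig$ via Corollary~\ref{cor:clipping_cpca_bound}. The details you flag that the paper leaves implicit (projections are norm-contracting so the almost-sure bound survives deflation; $\kappa_k(\msig') \le 2\kappa_k(\msig)$ via Lemma~\ref{lemma:clipping_cpca_bound} so Theorem~\ref{thm:final_composition} still applies; union-bounding the $k$ Oja calls) are all correct and worth making explicit.

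One notational slip is worth flagging. You write that the $\BBPCA$ run ``yields a $(\delta,\gamma)$-$k$-cPCA of $\hmsig$'' and then that Corollary~\ref{cor:clipping_cpca_bound} converts this to a ``$(3\delta,2\gamma)$-$k$-cPCA of $\msig$, which\,\dots\,dominates the target $(\Delta,\Gamma)$.'' That is backwards: $(\delta,\gamma)$ are the $1$-cPCA oracle parameters, and by Theorem~\ref{thm:final_composition} the output of $\BBPCA$ is only a $(\Delta',\Gamma')$-$k$-cPCA of $\msig'$ with $\Delta' = k^{\Theta(\log k)}\delta \approx \Delta$ and $\Gamma' = \Theta(k^3)\gamma \approx \Gamma$, not $(\delta,\gamma)$. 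The constants in the theorem statement are chosen so that $\Delta' \le \Delta/3$ and $\Gamma' \le \Gamma/2$; Corollary~\ref{cor:clipping_cpca_bound} then yields a $(\Delta,\Gamma)$-$k$-cPCA of $\msig$. Your conclusion is unaffected because the chain of implications points in the right direction once the intermediate parameters are corrected, but as written the intermediate claim is false and much stronger than what the reduction gives.
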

\begin{proof}
Let $\rho \defeq (\frac{\Delta}{24k})^{1/2}\frac{\Gamma}{\kappa_{k}\bb{\msig}}$. Using Corollary~\ref{cor:clipping_cpca_bound}, for $R\geq \bb{\frac{C_{p}^{p}}{\rho}}^{\frac{2}{p-2}}\Tr\bb{\mathbf{\Sigma}}$, the covariance of the clipped distribution $\mathcal{T}_R(\dist)$, $\msig'$, satisfies 
    \bas{
        \normop{\msig'-\msig} \leq \rho\normop{\msig}.
    }
    Next, we obtain a $k$-cPCA, $\mmu$, of $\msig'$ by running Algorithm \ref{alg:bbpca} on the clipped distribution. Using Theorem~\ref{thm:final_composition} and the guarantees of Proposition~\ref{prop:oja_1cpca_oracle}, we have that $\mmu$ is a $\bb{\frac{\Delta}{3},\frac{\Gamma}{2}}$-$k$-cPCA of $\msig'$. Finally, we claim $\mmu$ is also a $\bb{\Delta, \Gamma}$-$k$-cPCA of $\msig$ by Corollary~\ref{cor:clipping_cpca_bound}, which requires
    \ba{
        \normop{\msig'-\msig} \leq \rho\normop{\msig} \leq 2\sqrt{\frac{\Delta}{24k}}\Gamma\lambda_{k}\bb{\msig}. \label{eq:norm_bound_msig'}
    }
    The proof follows by substituting the definition of $\rho$ in \eqref{eq:norm_bound_msig'}. The space complexity is immediate from the definition of Algorithm~\ref{alg:bbpca}.
\end{proof}

We do note that the prior work \cite{allen2017first}, in addition to proving Proposition~\ref{prop:oja_1cpca_oracle}, gave a sophisticated analysis of a simultaneous variant of Oja's algorithm using a $d \times k$ block matrix, which applies to distributions with almost surely bounded supports (with rates parameterized by the covariance matrix and the almost sure bound). While this \cite{allen2017first} result does not directly apply to hypercontractive distributions, following the same roadmap as in Theorem~\ref{thm:oja_k}, i.e.\ first truncating the distribution (via Corollary~\ref{cor:clipping-and-truncation}) and then bounding the perturbation (via Corollary~\ref{cor:clipping_cpca_bound}), but using the $k$-Oja result of \cite{allen2017first} in place of Proposition~\ref{prop:oja_1cpca_oracle} and Theorem~\ref{thm:final_composition}, obtains an improvement over Theorem~\ref{thm:oja_k}. For instance, \cite{allen2017first}'s $k$-Oja analysis applies in all parameter regimes because it is not bottlenecked by the impossibility result in Proposition~\ref{prop:delta_gg_gamsquare_bad}, and also incurs only a polynomial overhead in $k$.

We include Theorem~\ref{thm:oja_k} as a proof-of-concept of how to apply our reduction to give a more straightforward $k$-cPCA result by relying only on existence of the corresponding $1$-cPCA algorithm, rather than designing a custom analysis as was done in \cite{allen2017first}. We are optimistic about the utility of the approach in Theorem~\ref{thm:oja_k} in providing tools for attacking future statistical PCA settings with various constraints (e.g.\ privacy, dependent data, and so forth as mentioned in Section~\ref{sec:intro}) via reductions, as $1$-cPCA algorithms are typically more straightforward to analyze than $k$-cPCA algorithms.

\newpage

\bibliographystyle{alpha}
\bibliography{refs}

\newpage

\begin{appendix}
\end{appendix}

\end{document}